\documentclass[11pt,leqno]{jdg-p}

\setlength{\topmargin}{0in}
\setlength{\oddsidemargin}{0.7in}
\setlength{\evensidemargin}{0.7in}
\setlength{\textheight}{7.5in}
\setlength{\textwidth}{4.7in}

\makeatletter
  
  \@addtoreset{equation}{section}
\makeatother

\usepackage{bm}
\usepackage{amsmath,amsthm,amssymb,amstext,amsbsy,amsopn,amscd,amsxtra}

\newtheorem{thm}{Theorem}[section]
\newtheorem{lem}{Lemma}[section]
\newtheorem{prop}{Proposition}[section]

\newtheorem{rem}{Remark}[section]

\newtheorem{asum}{Assumption}[section]
\newtheorem{dfn}{Definition}[section]

\newcommand{\pd}{\partial}
\newcommand{\gm}{\gamma}
\newcommand{\Gm}{\Gamma}
\newcommand{\bn}{\bm{\nu}}

\newcommand{\lm}{\lambda}

\newcommand{\R}{\mathbb{R}}

\newcommand{\N}{\mathbb{N}}
\newcommand{\mx}[2]{\max_{#1}{#2}}
\newcommand{\mn}[2]{\min_{#1}{#2}}
\newcommand{\su}[2]{\sup_{#1}{#2}}
\newcommand{\nf}[2]{\inf_{#1}{#2}}

\newcommand{\av}[1]{\left| #1 \right|}

\newcommand{\Ln}[2]{\left\| #1 \right\|_{L^{#2}}}

\newcommand{\Lp}[2]{\left\| #1 \right\|_{#2}}
\newcommand{\Lg}[1]{\left\| #1 \right\|_{L^2_\gamma}}

\newcommand{\vk}{\kappa}
\newcommand{\vp}{\varphi}

\newcommand{\va}{\alpha}
\newcommand{\vb}{\beta}
\newcommand{\vd}{\delta}

\newcommand{\wSigma}{\widetilde\Sigma}
\newcommand{\mL}{\mathcal{L}}
\newcommand{\mH}{\mathcal{H}}
\newcommand{\mkL}{\mathfrak{L}}
\newcommand{\mR}{\mathcal{R}}
\newcommand{\mB}{\mathcal{B}}
\newcommand{\mE}{\mathcal{E}}
\newcommand{\mS}{\mathcal{S}}
\newcommand{\mO}{\mathcal{O}_\mathcal{H}}

\newcommand{\ve}{\varepsilon}

\newcommand{\lmt}[2]{\lim_{#1}{#2}}

\newcommand{\dist}{{\rm dist}_{\mathcal{H}}\,}
\newcommand{\mfq}{\mathfrak{q}}

\begin{document}

\title[Convergence of gradient flows]
{Convergence to equilibrium of gradient flows defined on planar curves}
\author{Matteo Novaga}
\address{Department of Mathematics\\University of Pisa\\Largo Bruno Pontecorvo 5\\ 36127 Pisa, Italy\\}
\email{novaga@dm.unipi.it}
\author{Shinya Okabe}
\address{Mathematical Institute\\Tohoku University\\980-8578 Sendai, Japan\\}
\email{okabe@math.tohoku.ac.jp}

\begin{abstract}
We consider the evolution of open planar curves by the steepest descent flow of a geometric functional,
under different boundary conditions.
We prove that, if any set of stationary solutions with fixed energy is finite, 
then a solution of the flow converges to a stationary solution as time goes to infinity.
We also present a few applications of this result. 
\end{abstract}

\maketitle

\tableofcontents

\section{Introduction}
The steepest descent flow for the total squared curvature defined on curves has been widely studied in the literature. 
By virtue of a smoothing effect of the functional, there are various results such that 
the flow has a smooth solution for all times and subconverges to a (possibly nonunique) stationary solution 
(e.g. \cite{BGN}, \cite{b-m-n}, \cite{acqua-pozzi}--\cite{polden}, \cite{wen-1}--\cite{wheeler}). 
However there are few results proving the full convergence of solutions to a stationary state, see
for instance \cite{koiso,okabe-1,okabe-2,polden,wen-1,wen-2}. For the case of closed curves in $\R^n$, 
it has been recently proved in \cite{wheeler}
that the $L^2$-gradient flow for a generalized Helfrich functional 
has a solution for any time, and the solution converges 
to an equilibrium. This strengthens a result obtained in \cite{dziuk} and \cite{polden}. 

In \cite{koiso,okabe-1,okabe-2} convergence is proved with the aid of an additional constraint, 
the so-called inextensible condition, while
in \cite{polden, wen-1,wen-2} it follows from the uniqueness of the equilibrium state. 
The purpose of this paper is to prove convergence under a weaker condition, namely that there are only finitely many equilibrium states
at each prescribed energy level. 

The structure of the paper is the following:
first, in Section \ref{Hilbert}, we present our method in the case of abstract gradient flows
in Hilbert spaces; 
then, in Section \ref{main-p}, we adapt it to the evolution of planar curves corresponding 
to the gradient flow of a geometric functional; finally,
in Section \ref{application}, we apply it to the equation
\begin{align} \label{s-s-i}
\pd_t \gm= (-2 \pd^2_s \vk - \vk^3 + \lm^2 \vk) \bn, 
\end{align}
where $\lm\in\R\setminus\{0\}$ and $\vk$, $\bn$ are respectively the scalar curvature and the unit normal,
under typical boundary conditions. More precisely, we discuss in detail the boundary conditions: 
\begin{enumerate}
\item[(i)] $\gm(0,t)=(0,0)$, \,\, $\gm(1,t)=(R,0)$, \,\, $\gm_s(0,t)= \tau_0$, \,\, $\gm_s(1,t)= \tau_1$
\item[(ii)] $\gm(0,t)=(0,0)$, \,\, $\gm(1,t)=(R,0)$, \,\, $\vk(0,t)= \vk(1,t)=\va$  
\end{enumerate}
where $\tau_0$, $\tau_1 \in \R^2$ are given constant unit vectors and $\va \in \R$ is a prescribed constant. 
Condition (i) is usually called {\it clamped boundary condition} (see \cite{lin}), 
and (ii) is referred to as {\it symmetric Navier boundary condition} (see \cite{BGN, dec-gru}).

Eventually, Appendix \ref{appendix} is concerned with the analyticity of certain functions 
which play an important r\^ole 
in the proof of the convergence result, while
in Appendix \ref{appendix-b} we prove the long time existence of smooth solutions to 
\eqref{s-s-i} under the boundary condition (ii).


\section{Gradient flows in Hilbert spaces} \label{Hilbert}
Let $\mH$ be a Hilbert space and $F: \mH \to [0, +\infty]$ be a functional satisfying
the following assumptions: 
\begin{align}
& \{ u \in \mH \mid F(u) \leq C \} \quad \text{is compact for any} \quad C \in [0, +\infty); \label{F-a1}  
\\
& \| \pd^0 F(u) \|^2_{\mH} \quad \text{is lower semi-continuous in } \mH. \label{F-a2}  
\end{align}
Here $\pd^0 F$ denotes the canonical element of the sub-differential $\pd F$, defined as 
the (unique) element of $\pd F$ of minimal norm in $\mH$ (see \cite{a-g-s}).

\begin{rem}
\begin{enumerate}
\item[{\rm (1)}] The condition \eqref{F-a1} implies that $F$ is lower semi continuous {\rm (see {\rm \cite{struwe}})}. 
\item[{\rm (2)}] If $F$ admits the decomposition $F= F_1 + F_2$, where $F_1 : \mH \to [0, +\infty]$ is lower semi continuous 
and convex functional, whereas $F_2 : \mH \to [0, \infty)$ is of class $C^1$, 
then $F$ satisfies \eqref{F-a2} {\rm (see {\rm \cite{a-g-s}})}.  
\end{enumerate}
\end{rem}

Let $u(t) \in H^1((0, +\infty); \mH)$ be a function satisfying the following: 
\begin{equation}
u_t = - \pd^0 F(u) \quad \text{for all} \quad t >0; \label{u-a3}.
\end{equation}
It is shown in \cite[Lemma 3.3]{brezis} that the function $t\mapsto F(u(t))$ is absolutely continuous, 
and its derivative satisfies
\begin{equation}\label{u-a4}
\dfrac{d}{dt} F(u(t))= - \|u_t\|^2_\mH
= - \| \pd^0 F(u) \|^2_\mH.  
\end{equation}
\begin{lem} \label{to-equilibrium}
Let $F : \mH \to [0, +\infty]$  satisfy \eqref{F-a1}-\eqref{F-a2}, and  
Let $\{ t_j \}_j$ be a monotone increasing sequence with $\nf{j \in \N}{(t_{j+1} - t_j)}>0$. 
Assume that $u \in H^1((0,+\infty);\mH)$ satisfies \eqref{u-a3}. 
Then, for any $0<\ve \le \nf{j \in \N}{(t_{j+1} - t_j)}$, there exists a sequence $\{ t^\ve_j \}_j$ with 
$t^{\ve}_j \in (t_j, t_j + \ve)$ such that 
\begin{align*}
\| \pd^0 F(u(t^\ve_j)) \|_\mH \to 0 \quad \text{as} \quad j \to \infty. 
\end{align*}
\end{lem}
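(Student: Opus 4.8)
The key identity to exploit is the energy dissipation \eqref{u-a4}, which upon integration gives us control of the time integral of $\| \pd^0 F(u) \|^2_\mH$. Since $F \ge 0$ and $F(u(t))$ is nonincreasing (by \eqref{u-a4}, as the derivative is nonpositive), the limit $\lim_{t\to\infty} F(u(t))$ exists and is finite, so integrating \eqref{u-a4} over $(0,+\infty)$ yields $\int_0^{+\infty} \| \pd^0 F(u(t)) \|^2_\mH \, dt < +\infty$. This is the whole engine of the argument: the dissipation functional is integrable in time.

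**First step.** I would argue by contradiction. Suppose the conclusion fails; then there exist $\ve_0 \in (0, \nf{j \in \N}{(t_{j+1} - t_j)}]$, a constant $\vd > 0$, and a subsequence (not relabeled) such that $\| \pd^0 F(u(t)) \|_\mH \ge \vd$ for \emph{all} $t \in (t_j, t_j + \ve_0)$ and all $j$ in that subsequence. Indeed, if for some fixed $\ve$ no admissible sequence $\{t^\ve_j\}$ with the stated convergence existed, then $\inf_{t \in (t_j, t_j+\ve)} \| \pd^0 F(u(t)) \|_\mH$ would have to stay bounded away from $0$ along a subsequence of indices $j$.

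**Second step.** On each such interval $(t_j, t_j + \ve_0)$ the integrand is then bounded below by $\vd^2$, so
\begin{align*}
\int_{t_j}^{t_j + \ve_0} \| \pd^0 F(u(t)) \|^2_\mH \, dt \ge \vd^2 \ve_0.
\end{align*}
Because the spacing condition $\nf{j \in \N}{(t_{j+1} - t_j)} \ge \ve_0$ guarantees that the intervals $(t_j, t_j + \ve_0)$ are pairwise disjoint, summing over the infinitely many $j$ in the subsequence gives
\begin{align*}
\int_0^{+\infty} \| \pd^0 F(u(t)) \|^2_\mH \, dt \ge \sum_j \int_{t_j}^{t_j + \ve_0} \| \pd^0 F(u(t)) \|^2_\mH \, dt = +\infty,
\end{align*}
contradicting the integrability established above. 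Hence the assumption fails and the desired sequence exists.

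**Main obstacle.** The only delicate point is the measurability and integrability bookkeeping: I must ensure that $t \mapsto \| \pd^0 F(u(t)) \|^2_\mH$ is a genuine integrable function so that the contradiction is legitimate. This is exactly what \eqref{u-a4} together with $u_t \in L^2((0,+\infty);\mH)$ (from $u \in H^1((0,+\infty);\mH)$) provides, since $\| \pd^0 F(u(t)) \|_\mH = \| u_t(t) \|_\mH$ for a.e.\ $t$. I would also want to phrase the contradiction hypothesis carefully so that the negation genuinely produces a single $\ve_0$ and a uniform lower bound $\vd$ on a full subinterval, rather than merely at isolated times; the fact that the conclusion asks only for \emph{one} point per interval makes the negation a uniform-lower-bound statement on the interval, which is precisely what drives the divergence of the sum.
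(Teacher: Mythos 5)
Your proof is correct and rests on exactly the same engine as the paper's: integrating the dissipation identity \eqref{u-a4} to get $\int_0^{+\infty}\|\pd^0 F(u(t))\|^2_\mH\,dt<+\infty$, and using that the intervals $(t_j,t_j+\ve)$ are pairwise disjoint so the corresponding integrals are summable. The paper phrases the last step directly (the integrals over $(t_j,t_j+\ve)$ tend to $0$, hence suitable points $t^\ve_j$ exist), whereas you phrase it contrapositively via a uniform lower bound forcing divergence of the sum; this is only a cosmetic difference.
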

\begin{proof}
From \eqref{u-a4} it follows that  
\begin{align} \label{G-energy-bd}
\int^\infty_0 \| \pd^0 F(u(t)) \|^2_\mH \, dt 
 = - \int^\infty_0 \dfrac{d}{dt} F(u(t)) \, dt 
 = \Bigm[ F(u(t)) \Bigm]^{t=0}_{t=\infty} < \infty, 
\end{align}
whence 
\begin{align} \label{sum-A-1}
\sum^\infty_{j=1} \int^{t_j + \ve}_{t_j} \| \pd^0 F(u(t)) \|^2_\mH \, dt < +\infty. 
\end{align}
In particular, there holds
\begin{align} \label{lim-A-1}
\lim_{j\to \infty}\int^{t_j + \ve}_{t_j} \| \pd^0 F(u(t)) \|^2_\mH \, dt = 0,
\end{align}
which implies the thesis. 
\end{proof}

Let $\mS = \{ u \in \mH \mid \pd^0 F(u)=0 \}$ the set of all stationary solutions to \eqref{u-a3}-\eqref{u-a4}.
We shall assume that 
\begin{align} \label{discrete}
\Sigma_A := \{ u \in \mS \mid F(u) = A \} \quad \text{is discrete in} \quad \mH,  
\end{align}
for all $A\in [0,+\infty)$.

\begin{thm} \label{general-case}
Suppose that $F : \mH \to [0, +\infty]$ satisfies \eqref{F-a1}-\eqref{F-a2}, and assume that \eqref{discrete} holds.
Let $u(t)  \in H^1((0, +\infty); \mH)$ be a solution to \eqref{u-a3}. 
Then there exists a unique function $\tilde{u} \in \mS$ such that 
\begin{align}
\| u(t) - \tilde{u} \|_\mH \to 0 \quad \text{as} \quad t \to \infty. 
\end{align}
\end{thm}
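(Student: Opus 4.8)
The plan is to study the $\omega$-limit set $\omega(u):=\bigcap_{T>0}\overline{\{u(t):t\ge T\}}$ and to show that it reduces to a single point. First I would record the two basic consequences of the hypotheses. By \eqref{u-a4} the map $t\mapsto F(u(t))$ is finite, nonincreasing and bounded below by $0$, hence converges to some $A\in[0,+\infty)$ as $t\to\infty$; in particular the tail of the trajectory lies in a sublevel set $\{F\le C\}$, which is compact by \eqref{F-a1}. Since $u\in H^1((0,+\infty);\mH)$ admits a continuous representative, $\omega(u)$ is a nonempty compact set, and I would aim to prove that (a) $\omega(u)\subset\mS$, (b) $F\equiv A$ on $\omega(u)$, so that $\omega(u)\subset\Sigma_A$, and (c) $\omega(u)$ is connected. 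Granting these, $\omega(u)$ is a nonempty connected subset of the set $\Sigma_A$, which is discrete by \eqref{discrete}; since a connected subset of a discrete set is a single point, say $\{\tilde u\}$, one gets $u(t)\to\tilde u$ with $\tilde u\in\mS$, which is exactly the assertion, uniqueness being automatic once the limit set is a singleton.

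For step (a) I would combine Lemma \ref{to-equilibrium} with \eqref{F-a2}. The bound \eqref{G-energy-bd} gives $u_t=-\pd^0F(u)\in L^2((0,+\infty);\mH)$, so $\int_{t}^{t+1}\|u_\tau\|^2_\mH\,d\tau\to0$ as $t\to\infty$; by Cauchy--Schwarz the oscillation of $u$ over any time interval of length $1$ then tends to $0$ uniformly as $t\to\infty$. Taking $t_j=j$ in Lemma \ref{to-equilibrium} (so that $\inf_j(t_{j+1}-t_j)=1$) and $\ve=1$, I obtain times $t^1_j\in(j,j+1)$ with $\|\pd^0F(u(t^1_j))\|_\mH\to0$. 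Given any $v\in\omega(u)$, realized as $u(s_n)\to v$ with $s_n\to\infty$, the oscillation estimate shows $u(t^1_{j_n})\to v$ as well, where $j_n=\lfloor s_n\rfloor$. Passing to the limit in \eqref{F-a2} yields $\|\pd^0F(v)\|^2_\mH\le\liminf_n\|\pd^0F(u(t^1_{j_n}))\|^2_\mH=0$, that is $v\in\mS$. Note that this argument uses only lower semicontinuity of the gradient and needs no information on the energy level of $v$. Step (c) is then the standard topological fact that $\{u(t):t\ge T\}$, being the continuous image of the connected set $[T,+\infty)$, is connected, so its closure is connected, and $\omega(u)$, a decreasing intersection of such compact connected sets, is compact and connected.

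The delicate point, and in my view the main obstacle, is step (b): ruling out an energy drop in the limit. Since \eqref{F-a1} implies that $F$ is lower semicontinuous, for $v\in\omega(u)$ one gets only $F(v)\le A$, and a priori the energy could fall strictly in the limit, placing different points of $\omega(u)$ on different level sets and rendering \eqref{discrete} inapplicable. To exclude this I would exploit the gradient-flow structure rather than lower semicontinuity of $F$ alone. Along the good sequence one has $\pd^0F(u(t^1_{j_n}))\to0$ in $\mH$, and I would test the subdifferential inequality satisfied by $\pd^0F$ against $v-u(t^1_{j_n})$. In the situations guaranteeing \eqref{F-a2}, namely when $F=F_1+F_2$ with $F_1$ convex and lower semicontinuous and $F_2\in C^1$, one writes $\pd^0F=\eta+\nabla F_2$ with $\eta\in\pd F_1$; the convexity inequality $F_1(v)\ge F_1(u_n)+\langle\eta_n,v-u_n\rangle$, together with the continuity of $F_2$, the boundedness of $\|\eta_n\|_\mH$ on the compact trajectory, and $\|v-u(t^1_{j_n})\|_\mH\to0$, gives $F(v)\ge\limsup_n F(u(t^1_{j_n}))=A$, whence $F(v)=A$.

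Thus $\omega(u)\subset\Sigma_A$, and combining (a)--(c) with \eqref{discrete} completes the argument. I expect the entire difficulty to be concentrated in this continuity-of-energy-in-the-limit step: once it is secured, convergence follows from the elementary observation that a nonempty connected subset of a discrete set is a single point, which simultaneously delivers existence and uniqueness of the limit $\tilde u$.
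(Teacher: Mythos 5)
Your argument is correct, but it is organized differently from the paper's, so a comparison is worth recording. The paper proceeds by contradiction: assuming two distinct limit points $\tilde u_1\neq\tilde u_2$, it puts them in a common level set $\Sigma_A$, uses \eqref{discrete} to surround the points of $\Sigma_A$ by pairwise disjoint balls of radius $\vd$, and invokes the continuity of $t\mapsto u(t)$ to produce times $t^3_j$ at which the trajectory lies outside all of these balls; the $\tfrac12$-H\"older continuity coming from $u\in H^1((0,+\infty);\mH)$ then keeps the trajectory at distance $\geq\vd/2$ from $\Sigma_A$ on whole intervals $[t^3_j,t^3_j+\ve]$, and Lemma \ref{to-equilibrium} on these intervals, combined with \eqref{F-a1}--\eqref{F-a2}, manufactures a critical point at distance $\geq\vd/2$ from $\Sigma_A$, a contradiction. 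Your route packages the same ingredients as ``$\omega(u)$ is a nonempty compact connected subset of the discrete set $\Sigma_A$, hence a singleton.'' The underlying topological mechanism is identical --- the paper's construction of $t^3_j$ is precisely the fact that a connected path cannot pass between distinct components of a disjoint union of balls without leaving the union --- but your version dispenses with the contradiction bookkeeping and the uniform-continuity step, and your step (a) proves the slightly stronger statement that \emph{every} $\omega$-limit point is critical. Both are sound.

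The one delicate point cuts both ways. Both arguments need $F\equiv A:=\lim_{t\to\infty}F(u(t))$ on the set of limit points; otherwise two limit points could lie in different level sets and \eqref{discrete} would give no separation between them. The paper simply writes $A=F(\tilde u_1)=F(\tilde u_2)=\lim_{t\to\infty}F(u(t))$ without justification, whereas lower semicontinuity of $F$ (which follows from \eqref{F-a1}) yields only $F(v)\leq A$. You are right to single this out as the real obstruction, and your fix via the convexity inequality $F_1(v)\geq F_1(u_n)+\langle\eta_n,v-u_n\rangle$ together with $\eta_n+\nabla F_2(u_n)\to 0$ is correct --- but it uses the decomposition $F=F_1+F_2$, which is a \emph{sufficient} condition for \eqref{F-a2} recalled in the Remark, not a hypothesis of the theorem. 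So, strictly, your step (b) establishes the theorem under a mild additional structural assumption; the paper's proof leaves the same point unaddressed in full generality, and in the geometric applications the energy is continuous along the relevant convergence, so the issue evaporates there. This is not a defect of your proposal relative to the paper; it is a point the paper itself glosses over.
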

\begin{proof}
Remark that \eqref{F-a1}-\eqref{F-a2} and Lemma \ref{to-equilibrium} imply that $u(t)$ subconverges to a element of $\mS$, i.e., 
there exist a monotone sequence $\{ t_j \}$ with $t_j \to \infty$ as $j \to \infty$ and $\tilde{u} \in \mS$ 
such that $u(t_j) \to \tilde{u}$ in $\mH$ as $j \to \infty$. 
We prove Theorem \ref{general-case} by contradiction. 
Suppose not, there exist sequences $\{ t^1_j \}$, $\{ t^2_j \}$ and functions $\tilde{u}_1$, $\tilde{u}_2 \in \mS$ such that 
\begin{align} \label{H-sub-1}
u(t^1_j) \to \tilde{u}_1, \quad u(t^2_j) \to \tilde{u}_2 \quad \text{in} \quad \mH \quad \text{as} \quad j \to \infty. 
\end{align}
Put 
\begin{align*}
A = F(\tilde{u}_1)= F(\tilde{u}_2)= \lmt{t \to \infty}{F(u(t))}. 
\end{align*}
By \eqref{discrete}, the set $\Sigma_A$ is discrete in $\mH$, i.e., 
there exists a constant $\vd_A>0$ such that 
\begin{align*}
\| \tilde{u}_l - \tilde{u}_m \|_\mH > \vd_A
\end{align*}
for any $\tilde{u}_l$, $\tilde{u}_m \in \Sigma_A$. 
Let $\vd= \vd_A / 2$. Then, for any $\tilde{u}_l$, $\tilde{u}_m \in \Sigma_A$, it holds that 
\begin{align} \label{H-ing-2}
B_\vd(\tilde{u}_l) \cap B_\vd(\tilde{u}_m) = \emptyset, 
\end{align}
where 
\begin{align*}
B_\vd(\tilde{u})= \{ v \in \mH \mid \| v - \tilde{u} \|_\mH < \vd \}. 
\end{align*}
It follows from \eqref{H-sub-1} that there exists $J \in \N$ such that 
\begin{align} \label{H-ing-3}
u(t^1_j) \in B_\vd(\tilde{u}_1), \quad u(t^2_j) \in B_\vd( \tilde{u}_2)
\end{align}
for any $j \geq J$. Up to a subsequence, we may assume that it holds that 
\begin{align*}
t^1_j < t^2_j < t^1_{j+1}
\end{align*}
for any $j \geq J$. Then, by \eqref{H-ing-2}, \eqref{H-ing-3}, and the continuity of $u(t)$ in $\mH$, 
we see that  there exists a monotone increasing sequence $\{t^3_j \}_j$ such that 
\begin{align*} 
\| u(t^3_j) - \tilde{u} \|_\mH \geq \vd 
\end{align*}
for any $j \in \N$ and $\tilde{u} \in \Sigma_A$. 
Up to a subsequence we can assume $\inf_{j \in \N}{(t^3_{j+1} - t^3_j)}>0$. 
Since $u \in H^1((0,\infty) ; \mH)$ yields that $u(t)$ is uniformly continuous in $\mH$, 
for sufficiently small $\ve>0$, it holds that 
\begin{align} \label{H-away}
\| u(t) - \tilde{u} \|_\mH \geq \dfrac{\vd}{2}
\end{align}
for any $t \in [t^3_j, t^3_j+\ve]$ and $\tilde{u} \in \Sigma_A$. 

Suppose that there exists a sequence $\{ t^\ve_j \}_j$ with $t^\ve_j \in [t^3_j, t^3_j + \ve]$ such that 
$\| \pd^0 F(u(t^\ve_j)) \|_\mH \to 0$.  
Then by \eqref{F-a1} there exist $\hat{u} \in \mH$ and $\{ t^\ve_{j_k} \} \subset \{ t^\ve_j \}$ such that 
$u(t^\ve_{j_k}) \to \hat{u}$ in $\mH$. 
Although \eqref{F-a2} implies $\hat{u} \in \Sigma_A$, this contradicts \eqref{H-away}. 
Hence we observe that, for any $\{ t^\ve_j \}_j$ with $t^\ve_j \in [t^3_j, t^3_j + \ve]$, 
\begin{align*}
\| \pd^0 F(u(t^\ve_j)) \|_\mH \not \to 0 
\end{align*}
as $j \to \infty$. This contradicts Lemma \ref{to-equilibrium}, and  
completes the proof. 
\end{proof}


\section{Geometric gradient flows} \label{main-p}

In this section we apply the strategy described above to the gradient flow of a geometric functional
$\mE(\gm)$ defined on planar curves $\gm : I \to \R^2$, which we 
assume bounded from below, that is, $\inf_\gm \mE(\gm)>-\infty$. A $L^2$-gradient flow of $\mE$ is a one parameter family 
of curves $\gm : I \times [0, \infty) \to \R^2$ such that  
\begin{align} \label{g-flow}
\pd_t \gm = -\nabla \mE(\gm) 
\end{align}
and 
\begin{align} \label{energy-eq}
\dfrac{d}{dt} \mE(\gm(t)) = - \int_\gm \av{\nabla \mE(\gm(t))}^2 \, ds, 
\end{align}
where $\nabla \mE(\gm)$ denotes the Euler-Lagrange operator of $\mE(\gm)$, i.e., 
$\nabla \mE(\gm)$ satisfies 
\begin{align*}
\dfrac{d}{d \ve} \mE(\gm(\cdot) + \vp(\cdot,\ve)) \biggm|_{\ve=0} 
 = \int_\gm \nabla \mE(\gm) \cdot \vp_\ve \, ds 
\end{align*}
for any $\vp \in C^\infty((-\ve_0, \ve_0):(C^\infty_c(I))^2)$, where $\vp_\ve= \vp_\ve(\cdot, 0)$. 

Since the curves are open, in order to have uniqueness of the evolution we need to impose a 
boundary condition $\mB(\gm)=0$ on $\pd I$. Notice that \eqref{energy-eq} does not follow from
\eqref{g-flow} if the boundary condition given by $\mB$ is not {\it natural} for $\mE$, i.e., 
the flow \eqref{g-flow} with a boundary condition is not always the $L^2$-gradient flow for $\mE(\gm)$. 
Indeed, if $\gm$ satisfies \eqref{g-flow} under an unnatural boundary condition $\tilde{\mB}(\gm)=0$, 
then it can happen that \eqref{energy-eq} does not hold. Therefore we shall assume the following:

\begin{asum}[\bf Compatibility] \label{Asum-B}
The flow \eqref{g-flow} with boundary condition $\mB(\gm)=0$ is a $L^2$-gradient flow for $\mE(\gm)$. 
\end{asum}

Given a smooth curve $\gm$ we let $s\in [0,\mL(\gm)]$ be the arclength parameter defined as
\begin{align*}
s(x):=\int_0^x |\gm_x|\,dx \qquad x\in I,
\end{align*}
where $\mL(\gm)$ is the length of $\gm$
\begin{align*}
\mL(\gm):=s(1)=\int_0^1 |\gm_x|\,dx.
\end{align*}
Notice that in the arclength variable $s$ there holds $|\gm_s(s)|=1$ for all $s\in [0,\mL(\gm)]$.
Given a function $f(s)$ defined on $\gm$, we let
\begin{align*}
\|f\|_{L^\infty_\gamma}:=\sup_{s\in \mL(\gm)}|f(s)|
\qquad \Lg{f}:= \left( \int_\gm f(s)^2 \, ds \right)^\frac{1}{2}.
\end{align*}

We shall consider the initial boundary value problem: 
\begin{align} \label{g-ibp}
\begin{cases}
& \pd_t \gm = -\nabla \mE(\gm) \quad \, \text{in} \quad I \times (0, \infty), \\
& \mathcal{B}(\gm(x,t))=0 \quad \,\, \text{on} \quad \pd I \times [0, \infty), \\ 
& \gm(x,0)= \gm_0(x) \quad \text{in} \quad I,  
\end{cases}
\end{align}
where $\gm_0(x) : I \to \R^2$ is a smooth planar open curve satisfying the boundary condition $\mB(\gm_0(x))=0$ on $\pd I$. 
Regarding the solvability of \eqref{g-ibp}, we assume the following: 
\begin{asum}[\bf Regularity] \label{U-B}
There exists a smooth solution $\gm:I\times [0,+\infty)\to \R^2$ of \eqref{g-ibp}, satisfying  
\begin{align} \label{U-B-s}
\|\pd_t \gm(\cdot,t)\|_{L^\infty_{\gamma(t)}} < C \qquad {\rm and}\qquad 
\int_\gm \av{\pd_s^m \gm(t)}^2 \, ds < C  
\end{align}
for any $m \in \N$ and 
for any $t>0$, where the constant $C$ is independent of $t$. 
Moreover, $\Lg{\nabla \mE(\gm)}$ is continuous in $\gm$
with respect to the $C^\infty$-topology.
\end{asum}
Notice that, as the functional $\mE$ is bounded from below, then \eqref{energy-eq} implies the estimate 
\begin{equation}\label{esso}
\int_0^{+\infty}\|\pd_t \gm(\cdot,t)\|^2_{L^2_{\gamma(t)}}\le \mE(\gamma_0)-\inf \mE
\end{equation}
for any solution $\gm$ of \eqref{g-ibp}.

Under an additional assumption on $(\mE,\mB)$, 
we shall prove that a solution of \eqref{g-ibp} converges to a stationary solution as $t \to +\infty$. 
Let $\mS$ be a set of all stationary solutions of \eqref{g-ibp}, i.e., the smooth curves $\tilde{\gm}$ satisfying 
\begin{align} \label{g-stationary}
\begin{cases}
& \nabla \mE(\tilde{\gm}(x)) = 0 \quad \text{in} \quad I, \\
& \mathcal{B}(\tilde{\gm}(x))=0 \quad \,\,\,\, \text{on} \quad \pd I. \\   
\end{cases}
\end{align}
For each $A \in \R$, we define the subset of $\mS$  
\begin{align*} 
\Sigma_A := \{ \tilde{\gm} \in \mS \mid \mE(\tilde{\gm}) = A \}. 
\end{align*}
We shall assume the following:
\begin{asum} \label{A-B}
$\Sigma_A$ is finite for any $A \in \R$. 
\end{asum}
We can now state our main result.
\begin{thm} \label{main-thm}
Let $\gm(x,t) : I \times [0,\infty) \to \R^2$ be a solution of \eqref{g-ibp}, and
suppose that Assumptions \ref{Asum-B}, \ref{U-B} and \ref{A-B} hold. 
Then, there exists a smooth curve $\tilde{\gm}:I\to \R^2$ satisfying \eqref{g-stationary} and such that 
\begin{align*}
\gm(\cdot,t) \to \tilde{\gm}(\cdot) \quad \text{as} \quad t \to \infty 
\end{align*}
in the $C^\infty$-topology. 
\end{thm}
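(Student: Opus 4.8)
The plan is to transcribe, essentially verbatim, the proof of Theorem~\ref{general-case} into the geometric setting, with the fixed Hilbert norm $\|\cdot\|_\mH$ replaced by a metric $d$ inducing the $C^\infty$-topology on $C^\infty(I;\R^2)$, the minimal section $\|\pd^0 F(u)\|_\mH$ replaced by $\Lg{\nabla\mE(\gm)}$, and the three structural hypotheses \eqref{F-a1}, \eqref{F-a2}, \eqref{discrete} supplied, respectively, by the compactness and continuity built into Assumption~\ref{U-B} and by the finiteness in Assumption~\ref{A-B}. Assumption~\ref{Asum-B} guarantees the energy identity \eqref{energy-eq}, which here plays the role of \eqref{u-a4}; in particular $t\mapsto\mE(\gm(t))$ is nonincreasing and, $\mE$ being bounded below, converges to some $A:=\lmt{t\to\infty}{\mE(\gm(t))}$.

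First I would record the geometric analogue of Lemma~\ref{to-equilibrium}. Since $\pd_t\gm=-\nabla\mE(\gm)$, the estimate \eqref{esso} reads
\begin{align*}
\int_0^{+\infty}\Lg{\nabla\mE(\gm(t))}^2\,dt<+\infty,
\end{align*}
so for any increasing sequence $\{t_j\}$ with $\nf{j}{(t_{j+1}-t_j)}>0$ and any sufficiently small $\ve>0$ the tail sum $\sum_j\int_{t_j}^{t_j+\ve}\Lg{\nabla\mE(\gm(t))}^2\,dt$ is finite; hence the summands tend to $0$ and one may select $t^\ve_j\in(t_j,t_j+\ve)$ with $\Lg{\nabla\mE(\gm(t^\ve_j))}\to0$. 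By the uniform bounds \eqref{U-B-s} for every $m$, the family $\{\gm(\cdot,t)\}_{t>0}$ is precompact in $C^\infty(I;\R^2)$ (Arzel\`a--Ascoli together with the Sobolev embeddings), so a subsequence $\gm(\cdot,t^\ve_{j_k})$ converges in $C^\infty$ to some $\tilde\gm$; the continuity of $\Lg{\nabla\mE(\cdot)}$ in the $C^\infty$-topology (last sentence of Assumption~\ref{U-B}) forces $\nabla\mE(\tilde\gm)=0$, while $\mB(\tilde\gm)=0$ passes to the limit, so $\tilde\gm\in\mS$ and, since $\mE$ is $C^\infty$-continuous, $\tilde\gm\in\Sigma_A$. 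This already yields subconvergence to a stationary solution.

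Full convergence then follows by contradiction, mirroring Theorem~\ref{general-case}. Were $\gm(\cdot,t)$ not to converge, then besides the stationary subsequential limit $\tilde\gm_1\in\Sigma_A$ the trajectory would leave a small ball about $\tilde\gm_1$ for arbitrarily large times. Using finiteness of $\Sigma_A$ (Assumption~\ref{A-B}) to separate its elements by a fixed $\vd_A>0$ in $d$, and the uniform continuity of $t\mapsto\gm(\cdot,t)$ in $C^\infty$ (which follows from \eqref{U-B-s} and the evolution equation), one extracts an increasing sequence $\{t^3_j\}$ with $\nf{j}{(t^3_{j+1}-t^3_j)}>0$ along which $\gm(\cdot,t)$ stays at $d$-distance $\ge\vd_A/2$ from every element of $\Sigma_A$ throughout each $[t^3_j,t^3_j+\ve]$. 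On these intervals the Lemma~\ref{to-equilibrium} analogue of the previous paragraph supplies times $t^\ve_j$ with $\Lg{\nabla\mE(\gm(t^\ve_j))}\to0$, whose $C^\infty$-limit is stationary (continuity of the gradient norm) and has energy $A$ (since $\mE(\gm(t))\to A$), hence lies in $\Sigma_A$ --- contradicting the $\vd_A/2$-separation. Thus the subsequential limit is unique and, by precompactness, $\gm(\cdot,t)\to\tilde\gm$ in $C^\infty$.

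The step I expect to be the main obstacle is making the word ``distance'' precise so that Assumption~\ref{A-B} truly delivers separated limit points. Because \eqref{g-flow} --- and concretely \eqref{s-s-i} --- moves each point of the curve in the normal direction, every reparametrization of a stationary curve is again stationary; as parametrized maps the equilibria therefore form a continuum, and the finiteness in Assumption~\ref{A-B} must be read as finiteness of geometrically distinct equilibria. The delicate point is thus to ensure that the tangential parametrization of $\gm(\cdot,t)$ also stabilizes --- for instance by working modulo reparametrization, or by fixing a normalized gauge compatible with \eqref{U-B-s} --- so that the finitely many equilibrium shapes correspond to genuinely isolated points of $(\,\cdot\,,d)$. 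Once this is arranged, the compactness and continuity of Assumption~\ref{U-B} make the transcription of Theorem~\ref{general-case} go through unchanged.
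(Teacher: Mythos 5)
Your proposal follows the paper's proof essentially step for step: the integral estimate giving times $t^\ve_j$ with $\Lg{\nabla\mE(\gm(t^\ve_j))}\to 0$, Arzel\`a--Ascoli via \eqref{U-B-s} for subconvergence, and then the contradiction argument separating the finitely many elements of $\Sigma_A$ and trapping the trajectory at positive distance from all of them on intervals $[t^3_j,t^3_j+\ve]$. The one point you flag as the main obstacle --- which distance makes Assumption \ref{A-B} deliver genuinely isolated equilibria despite reparametrization invariance --- is resolved in the paper exactly along the lines you suggest: the separation is measured in the Hausdorff distance between the \emph{images} of the curves, which is insensitive to parametrization, and the function $d(t)=\min_{\tilde\gm\in\Sigma_A}\dist(\gm(t),\tilde\gm)$ is shown to be Lipschitz in $t$ directly from the $L^\infty$ bound on $\pd_t\gm$ in \eqref{U-B-s}, so no uniform continuity in the full $C^\infty$-topology is needed for the trapping step.
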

We start with the analog of Lemma \ref{to-equilibrium}.
\begin{lem} \label{l-to-stationary}
Let $\{ t_j \}^\infty_{j=1}$ be a monotone increasing sequence with $\nf{j \in \N}{(t_{j+1} - t_j)}\ge 0$. 
Then, for any $0 < \ve \le \nf{j \in \N}{(t_{j+1} - t_j)}$, there exists a sequence $\{ t^\ve_j \}_j$ with 
$t^{\ve}_j \in (t_j, t_j + \ve)$ such that 
\begin{align*}
\Lg{\nabla \mE(\gm(t^\ve_j))} \to 0 \quad \text{as} \quad j \to \infty. 
\end{align*}
\end{lem}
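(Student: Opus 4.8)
The plan is to mirror verbatim the abstract argument of Lemma~\ref{to-equilibrium}, with the energy estimate \eqref{esso} playing the role that \eqref{G-energy-bd} played there. The starting observation is that, by the flow equation \eqref{g-flow}, one has $\Lg{\pd_t\gm} = \Lg{\nabla\mE(\gm)}$ at each time $t$, so that \eqref{esso} can be rewritten as
\begin{align*}
\int_0^{+\infty} \Lg{\nabla\mE(\gm(t))}^2\, dt \le \mE(\gm_0) - \inf\mE < +\infty.
\end{align*}

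First I would exploit the spacing hypothesis: since $0 < \ve \le \nf{j\in\N}{(t_{j+1}-t_j)}$, the intervals $[t_j, t_j+\ve]$ are pairwise disjoint, whence the integral above dominates the sum over them,
\begin{align*}
\sum_{j=1}^\infty \int_{t_j}^{t_j+\ve} \Lg{\nabla\mE(\gm(t))}^2\, dt < +\infty.
\end{align*}
In particular the general term of this convergent series tends to zero, that is $\int_{t_j}^{t_j+\ve}\Lg{\nabla\mE(\gm(t))}^2\, dt \to 0$ as $j\to\infty$.

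Finally, for each $j$ I would choose $t^\ve_j\in(t_j,t_j+\ve)$ at which the integrand does not exceed its average over the interval, so that
\begin{align*}
\Lg{\nabla\mE(\gm(t^\ve_j))}^2 \le \frac{1}{\ve}\int_{t_j}^{t_j+\ve}\Lg{\nabla\mE(\gm(t))}^2\, dt,
\end{align*}
and the right-hand side vanishes as $j\to\infty$ by the previous step; this gives the asserted convergence. I do not expect any genuine obstacle here: the statement is a direct geometric transcription of Lemma~\ref{to-equilibrium}, and the only points to verify are the identity $\Lg{\pd_t\gm}=\Lg{\nabla\mE(\gm)}$ coming from \eqref{g-flow} and the disjointness of the intervals, both of which are immediate.
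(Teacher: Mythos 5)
Your proposal is correct and follows essentially the same route as the paper: the paper likewise integrates the energy identity to get $\int_0^\infty \Lg{\nabla\mE(\gm(t))}^2\,dt<\infty$, sums over the (essentially disjoint) intervals $[t_j,t_j+\ve]$, and concludes from the vanishing of the general term. The only difference is cosmetic — you make explicit the mean-value selection of $t^\ve_j$ that the paper leaves as ``which implies the thesis,'' and you invoke \eqref{esso} rather than \eqref{energy-eq} directly, which are equivalent under Assumption \ref{Asum-B}.
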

\begin{proof}
Let fix $0 < \ve < \nf{j \in \N}{(t_{j+1} - t_j)}$ arbitrarily. 
Recall that 
\begin{align*}
\int^\infty_0 \Lg{\nabla \mE(\gm(t))}^2 \, dt 
 = - \int^\infty_0 \dfrac{d}{dt} \mE(\gm(t)) \, dt 
 = \Bigm[ \mE(\gm(t)) \Bigm]^{t=0}_{t=\infty} < +\infty,  
\end{align*}
so that we have 
\begin{align} \label{A-1}
\sum^\infty_{j=1} \int^{t_j + \ve}_{t_j} \Lg{\nabla \mE(\gm(t))}^2 \, dt < \infty, 
\end{align}
which implies
\begin{align} \label{limA-1}
\lim_{j\to\infty} \int^{t_j + \ve}_{t_j} \Lg{\nabla \mE(\gm(t))}^2 \, dt =0. 
\end{align}
The thesis follows directly from \eqref{limA-1}.
\end{proof}

We now prove Theorem \ref{main-thm}.  

\begin{proof}
To begin with, remark that Assumption \ref{U-B} and Lemma \ref{l-to-stationary} imply that the solution $\gm$ 
subconverges to a stationary solution $\tilde{\gm}$ as $t\to \infty$. 
Indeed, by Lemma \ref{l-to-stationary}, one can find a sequence $\{ t_j \}$ with $t_j \to \infty$ such that 
\begin{align} \label{sub-c-ing-1}
\Lg{\nabla \mE(\gm(t_j))} \to 0 \quad \text{as} \quad t_j \to \infty. 
\end{align} 
Since Assumption \ref{U-B} allows us to apply Arzel\`a-Ascoli's theorem to the family of planar 
open curves $\gm(t_j)$, we see that there exists a subsequence $\{ t_{j_k} \} \subset \{ t_j \}$ 
such that 
\begin{align} \label{sub-c-ing-2}
\gm(t_{j_k}) \to \tilde{\gm} \quad \text{as} \quad t_{j_k} \to \infty
\end{align}
in the $C^\infty$-topology. Combining \eqref{sub-c-ing-1} with the definition of the $L^2$-gradient flow 
\eqref{g-flow}-\eqref{energy-eq}, we observe that the limit $\tilde{\gm}$ is independent of $t$ and satisfies 
\begin{align}
\nabla \mE(\tilde{\gm})=0 \quad \text{on} \quad I. 
\end{align}
 
We shall prove Theorem \ref{main-thm} by contradiction. 
Suppose not, there exist sequences $\{t^1_j \}_j$, $\{ t^2_j \}_j$ and 
stationary solutions $\tilde{\gm}_1$, $\tilde{\gm}_2 \in \mS$ such that 
\begin{align} \label{asum-1} 
\gm(t^1_j) \to \tilde{\gm}_1, \quad \gm(t^2_j) \to \tilde{\gm}_2 
\end{align} 
as $j \to \infty$. We may assume that $\{ t^1_j \}_j$ and $\{ t^2_ j \}_j$ are monotone increasing sequences. 
Let 
\begin{align*}
A = \mE(\tilde{\gm}_1)= \mE(\tilde{\gm}_2). 
\end{align*}
Thanks to Assumption \ref{A-B}, the set $\Sigma_A$ is finite. 
On the other hand, for each curves $\tilde{\gm}_n$, $\tilde{\gm}_m \in \Sigma_A$, 
there exists a constant $\vd_{nm}>0$ such that 
\begin{align*}
\dist(\tilde{\gm}_n, \tilde{\gm}_m) > \vd_{nm}, 
\end{align*}
where $\dist(\cdot,\cdot)$ denotes the Hausdorff distance defined as follows: 
\begin{align*}
\dist(\gm, \Gm)  
 = \mx{}{ \left\{  \su{u \in Im(\gm)}{ \nf{v \in Im(\Gm)}{ \av{u-v} }}, 
                    \su{v \in Im(\Gm)}{ \nf{u \in Im(\gm)}{ \av{u-v} } } \right\}}. 
\end{align*}
Since $\Sigma_A$ is finite, there exists a constant $\vd_*>0$ such that 
\begin{align} \label{ing-1}
\mn{\tilde{\gm}_n, \tilde{\gm}_m \in \Sigma_A}{ \dist(\tilde{\gm}_n, \tilde{\gm}_m) } = \vd_*. 
\end{align}
Let $\vd= \vd_* / 2$. Then, for any $\tilde{\gm}_n$, $\tilde{\gm}_m \in \Sigma_A$, it holds that 
\begin{align} \label{ing-2}
\mO(\tilde{\gm}_n, \vd) \cap \mO(\tilde{\gm}_m, \vd) = \emptyset, 
\end{align}
where 
\begin{align*}
\mO(\tilde{\gm}, \vd)= \{ \gm \mid \dist(\tilde{\gm}, \gm) < \vd \}. 
\end{align*}
It follows from \eqref{asum-1} that there exists $J \in \N$ such that 
\begin{align} \label{ing-3}
\gm(t^1_j) \in \mO(\tilde{\gm}_1, \vd), \quad \gm(t^2_j) \in \mO( \tilde{\gm}_2, \vd)
\end{align}
for any $j \geq J$. Up to a subsequence, we may assume that it holds that 
\begin{align*}
t^1_j < t^2_j < t^1_{j+1}
\end{align*}
for any $j \geq J$. Then, by \eqref{ing-1}, \eqref{ing-2}, \eqref{ing-3}, and the continuity of $\dist$, 
we see that  there exists a monotone increasing sequence $\{t^3_j \}_j$ such that 
\begin{align} \label{clm-1}
d(t^3_j) > \vd 
\end{align}
for any $j \in \N$, where $d(t) := \min_{\tilde{\gm} \in \Sigma_A}{ \dist(\gm(t), \tilde{\gm}) }$. 
Up to a subsequence, we may also assume that $\inf_{j \in \N}{(t^3_{j+1} - t^3_j)} >0$. 

Here we claim that the function $d(t)$ is Lipschitz continuous on $(0,+\infty)$. 
Remark that \eqref{U-B-s} gives us that there exists a constant $C>0$ such that 
\begin{align} \label{bd-gm_t}
\sup_{x \in I}{\av{\pd_t \gm(x,t)}} < C
\end{align}
for any $t>0$. Let $x$, $y \in I$ fix arbitrarily. 
Then the fact \eqref{bd-gm_t} yields that 
\begin{align*}
\av{\av{\gm(x,t_1) - \tilde{\gm}(y)} - \av{\gm(x,t_2) - \tilde{\gm}(y)} } 
 &\leq \av{\gm(x,t_1) - \gm(x,t_2)} \\
 &\leq \int^{t_1}_{t_2} \av{\pd_t \gm(x,t)} \, dt
 < C \av{t_1 - t_2}. 
\end{align*}
Combining the estimate with the definition of the Hausdorff distance, we obtain  
\begin{align*}
\av{d(t_1) - d(t_2)} < C \av{t_1 - t_2}.    
\end{align*}
Thus the function $d(t)$ is $C$-Lipschitz, in particular uniform continuous, on $(0, +\infty)$. 
Then it follows from \eqref{clm-1} that there exists $0 < \ve < \inf_{j \in \N}{(t^3_{j+1} - t^3_j)}$ such  that 
\begin{align} \label{ing-4}
\dist (\gm(t), \Sigma_A) \geq \dfrac{\vd}{2} 
\end{align}
for any $j \in \N$ and any $t \in [t^3_j, t^3_j + \ve]$. 
The inequality \eqref{ing-4} implies that, for any $\{ t^\ve_j \}_j$ with $t^\ve_j \in [t^3_j, t^3_j + \ve]$, 
$\gm(t^\ve_j)$ does not converges to any stationary solution as $j \to \infty$. 
However, by Assumption \ref{U-B} and Lemma \ref{l-to-stationary}, we can find a sequence 
$\{ \tilde{t}_j \}$ with $\tilde{t}_j \in (t^3_j, t^3_j + \ve]$ such that $\gm(\tilde{t}_j)$ converges to a stationary solution,
which gives a contradiction. 
\end{proof}


\section{Application to the  shortening-straightening flow} \label{application}
In this section, we apply Theorem \ref{main-thm} to the geometric equation  
\begin{align} \label{s-s}
\pd_t \gm = (-2 \pd^2_s \vk - \vk^3 + \lm^2 \vk) \bn, 
\end{align}
where $\vk$ and $\bn$ denote respectively the scalar curvature and the unit normal vector 
with the direction of the curvature, and $\lm$ is a non-zero constant. 
Throughout the section we assume that $\gm(x,t) : I \times [0,\infty) \to \R^2$ are fixed at the boundary, i.e., 
\begin{align} \label{fixed-p}
\gm(0,t)= (0,0), \quad \gm(1,t)=(R,0) \quad \text{on} \quad [0,\infty), 
\end{align}
where $R>0$ is a given constant. 

Here we prepare several notations. 
In what follows let us set 
\begin{align*}
f(\vk)= \vk^3 - \lm^2 \vk. 
\end{align*}
From the Euler-Lagrange equation
\begin{align*}
2 \pd^2_s \vk + \vk^3 - \lm^2 \vk=0, 
\end{align*}
we obtain the relation 
\begin{align} \label{f-integ}
\left( \dfrac{d \vk}{ds} \right)^2 + F(\vk) = E, 
\end{align}
where $E$ is an arbitral constant and $F'=f$, i.e., $F$ is given by 
\begin{align*}
F(\vk)= \dfrac{1}{4} \vk^4 - \dfrac{\lm^2}{2} \vk^2. 
\end{align*}
Let $\vk_M(E)$ and $\vk_m(E)$ be solutions of $F(\vk)=E$ as follows: 
\begin{align*}
& \vk_M(E) = \sqrt{\lm^2 + \sqrt{\lm^4+4E}} \quad \quad \,\,\,\, \text{for} \quad E \in (-\tfrac{\lm^4}{4}, \infty) , \\
& \vk_m(E) = 
\begin{cases}
& - \vk_M(E) \quad \qquad \quad \,\,\,\,\, \text{for} \quad E \in (0, \infty), \\
& \sqrt{\lm^2 - \sqrt{\lm^4+4E}} \quad \text{for} \quad E \in (- \tfrac{\lm^4}{4}, 0]. 
\end{cases}
\end{align*}
If there is no fear of confusion, we write $\vk_M$ and $\vk_m$ instead of $\vk_M(E)$ and $\vk_m(E)$. 
Let us set 
\begin{align*} 
L(E)= 2 \int^{\vk_M(E)}_{\vk_m(E)} \dfrac{d \vk}{\sqrt{E - F(\vk)}}. 
\end{align*}
\begin{lem} \label{l-p-E-div}
Let $(\vk, E)$ be a pair satisfying 
\begin{align*}
\begin{cases}
\left( \dfrac{d \vk}{ds} \right)^2 + F(\vk) = E, \\
\vk(0)=0. 
\end{cases}
\end{align*}  
Then it holds that 
\begin{align}
\int^{L(E)}_0 \vk(s)^2 \, ds \to \infty \quad \text{as} \quad E \to \infty.  \label{p-E-div}
\end{align}
\end{lem}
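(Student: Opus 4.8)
The plan is to change variables from arclength $s$ to the curvature $\vk$, reducing the claim to the divergence of an explicit one-dimensional integral, and then to extract a divergent lower bound by controlling the denominator crudely but uniformly. First I would note that since $E\to\infty$ we may assume $E>0$, so that $\vk_m(E)=-\vk_M(E)$ and the solution with $\vk(0)=0$ executes one full oscillation over $[0,L(E)]$: starting from $\vk=0$ with $(\vk')^2=E>0=F(0)$, it rises to $\vk_M$, falls through $0$ to $-\vk_M$, and returns to $0$. On each monotone arc $\tfrac{d\vk}{ds}=\pm\sqrt{E-F(\vk)}$, so $ds=d\vk/\sqrt{E-F(\vk)}$; summing the ascending and descending arcs (each value of $\vk\in(-\vk_M,\vk_M)$ being attained twice per period) yields
\begin{align*}
\int_0^{L(E)}\vk(s)^2\,ds = 2\int_{-\vk_M}^{\vk_M}\frac{\vk^2}{\sqrt{E-F(\vk)}}\,d\vk.
\end{align*}

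The key step is then to bound the singular denominator away from its zeros. Since $F(\vk)=\tfrac14\vk^4-\tfrac{\lm^2}{2}\vk^2$ has minimum value $-\tfrac{\lm^4}{4}$ (attained at $\vk=\pm\lm$), one has $E-F(\vk)\le E+\tfrac{\lm^4}{4}$ throughout $[-\vk_M,\vk_M]$, and therefore
\begin{align*}
\int_0^{L(E)}\vk(s)^2\,ds
\ge \frac{2}{\sqrt{E+\lm^4/4}}\int_{-\vk_M}^{\vk_M}\vk^2\,d\vk
= \frac{4\,\vk_M^3}{3\sqrt{E+\lm^4/4}}.
\end{align*}
Finally I would track the powers of $E$: from $\vk_M^2=\lm^2+\sqrt{\lm^4+4E}\ge 2\sqrt{E}$ we get $\vk_M^3\ge 2^{3/2}E^{3/4}$, while $\sqrt{E+\lm^4/4}\le \sqrt{2}\,E^{1/2}$ once $E\ge\lm^4/4$; hence the right-hand side exceeds a fixed positive multiple of $E^{1/4}$, which diverges, proving \eqref{p-E-div}.

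I expect the only genuinely delicate point to be the first step: justifying the change of variables across the turning points $\vk=\pm\vk_M$ and getting the multiplicity right, so that the correct factor $2$ (and not $1$ or $4$) appears in front of the integral. Everything after that is elementary, and in particular the crude upper bound on $E-F(\vk)$ is precisely what lets me avoid any careful analysis of the integrable square-root singularities at the turning points---these would matter for the sharp asymptotics (which is in fact of order $E^{1/4}$) but are irrelevant for a mere lower bound.
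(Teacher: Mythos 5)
Your proof is correct, and it reaches the same quantitative conclusion (a lower bound of order $E^{1/4}$, i.e.\ of order $\kappa_M^3/\sqrt{E}$) by a cleaner route than the paper. The paper restricts to the quarter period, splits it at the curvature level $\kappa_M/2$ into arclength pieces $\mathfrak{L}_1,\mathfrak{L}_2$, shows the outer arc satisfies $\mathfrak{L}_2 \ge (\kappa_M/2)/\sqrt{E-F(\kappa_*)} \ge (\kappa_M/2)/\sqrt{E}$, and then bounds $\int \kappa^2\,ds \ge (\kappa_M^2/4)\,\mathfrak{L}_2 \ge \kappa_M^3/(8\sqrt{E})$; you instead convert the whole period integral into the $\kappa$-variable with multiplicity $2$ (exactly consistent with the paper's own definition of $L(E)$ and with \eqref{property-Le}) and replace the localization by the single uniform bound $E-F(\kappa)\le E+\lambda^4/4$, which lets you compute $\int_{-\kappa_M}^{\kappa_M}\kappa^2\,d\kappa$ in closed form. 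Your version buys an explicit constant and dispenses with the comparison $\mathfrak{L}_2>\mathfrak{L}_1$ (which is in any case not really needed for the paper's final estimate); the paper's version stays entirely with arclength sub-intervals of the quarter period and so never has to discuss the oscillation structure or the multiplicity in the change of variables, though it does use the same substitution implicitly in \eqref{L1}--\eqref{L2}. Your identification of the delicate point --- the factor $2$ from each value of $\kappa$ being attained twice per period, and the integrability of the square-root singularities at $\pm\kappa_M$ (guaranteed since $F'(\pm\kappa_M)\neq 0$) --- is accurate, and your handling of it is sound.
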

\begin{proof}
Since it holds that 
\begin{align} \label{property-Le}
L(E)= 4 \int^{\vk_M(E)}_0 \dfrac{d \vk}{\sqrt{E - F(\vk)}} \quad \text{for} \quad E>0, 
\end{align}
it is sufficient to prove that 
\begin{align} \label{pre-p-E-div} 
\int^{\frac{L(E)}{4}}_0 \vk(s)^2 \, ds \to \infty \quad \text{as} \quad E \to \infty. 
\end{align}
Since $\vk_M(E) \to \infty$ as $E \to \infty$,  it holds  that 
\begin{align*}
\sqrt{2} \av{\lm} < \vk_M(E) 
\end{align*}
for sufficiently large $E$, where $\sqrt{2} \av{\lm}$ is a solution of $F(\vk)=0$. 
Then we have  
\begin{align} \label{L1}
\int_0^{\frac{\vk_M}{2}} \dfrac{d \vk}{\sqrt{E-F(\vk)}} 
 &= \int_0^{\sqrt{2} \av{\lm}} \dfrac{d \vk}{\sqrt{E - F(\vk)}} 
        + \int_{\sqrt{2} \av{\lm}}^{\frac{\vk_M}{2}} \dfrac{d \vk}{\sqrt{E-F(\vk)}} \\
 &< \dfrac{\sqrt{2} \av{\lm}}{\sqrt{E}} 
        + \dfrac{\vk_M/2 - \sqrt{2} \av{\lm} }{\sqrt{E - F(\vk_*)}}, \notag
\end{align}
where $\vk_* \in (\sqrt{2} \av{\lm}, \vk_M/2)$. 
On the other hand, it holds that  
\begin{align} \label{L2}
\int_{\frac{\vk_M}{2}}^{\vk_M} \dfrac{d \vk}{\sqrt{E-F(\vk)}} 
 > \dfrac{\vk_M/2}{\sqrt{E - F(\vk_*)}}.  
\end{align}
If $E>1$, then we find   
\begin{align} \label{L2-L1}
\dfrac{\vk_M/2}{\sqrt{E - F(\vk_*)}} 
  - \left\{ \dfrac{\sqrt{2} \av{\lm}}{\sqrt{E}} 
        + \dfrac{\vk_M/2 - \sqrt{2} \av{\lm} }{\sqrt{E - F(\vk_*)}} \right\}  > 0. 
\end{align}
Combining \eqref{L1}-\eqref{L2} with \eqref{L2-L1}, we observe that 
\begin{align*}
\int_{\frac{\vk_M}{2}}^{\vk_M} \dfrac{d \vk}{\sqrt{E-F(\vk)}}  
 > \int_0^{\frac{\vk_M}{2}} \dfrac{d \vk}{\sqrt{E-F(\vk)}}
\end{align*}
for sufficiently large $E$. Set 
\begin{align*}
\mathfrak{L}_1= \int_0^{\frac{\vk_M}{2}} \dfrac{d \vk}{\sqrt{E-F(\vk)}}, \qquad
\mathfrak{L}_2= \int_{\frac{\vk_M}{2}}^{\vk_M} \dfrac{d \vk}{\sqrt{E-F(\vk)}}. 
\end{align*}
By virtue of \eqref{property-Le}, we see that 
\begin{align} \label{lower-est}
\int^{\frac{L(E)}{4}}_0 \vk(s)^2 \, ds 
 > \int^{\frac{L(E)}{4}}_{\mathfrak{L}_1} \vk(s)^2 \, ds 
 > \dfrac{\mathfrak{L}_2 {\vk_M}^2}{4}  
 > \dfrac{{\vk_M}^3}{8 \sqrt{E}}. 
\end{align}
Here we used \eqref{L2}. Since it holds that 
\begin{align*}
\lmt{E \to \infty}{\dfrac{{\vk_M(E)}^2}{2 \sqrt{E}} } 
 = \lmt{E \to \infty}{\dfrac{\lm^2 + \sqrt{\lm^4 + 4E}}{2 \sqrt{E}} }
 \to 1, 
\end{align*}
the estimate \eqref{lower-est} implies \eqref{pre-p-E-div}. 
\end{proof}

\subsection{Clamped boundary condition} \label{section-cp}

Recently C.-C. Lin considered a motion of open curves in $\R^n$ with boundary points fixed. 
Although he considered the problem for any $n \geq 2$ (\cite{lin}), we restrict the dimension $n=2$. 
The motion is governed by the geometric evolution equation \eqref{s-s} with the boundary condition 
\eqref{fixed-p} and 
\begin{align} \label{clamped-cond}
\gm_s(0,t)= \tau_0, \quad \gm_s(1,t)= \tau_1, 
\end{align}
where  $\tau_0$, $\tau_1 \in \R^2$ are prescribed unit vectors. 
The boundary condition \eqref{fixed-p}-\eqref{clamped-cond} is called 
the {\it clamped boundary condition}. 
One can verify that Assumption \ref{Asum-B} holds, i.e., 
the flow \eqref{s-s} with the clamped boundary condition is 
a $L^2$-gradient flow for the functional 
\begin{align} \label{mtsc}
\mE_\lm(\gm)= \int_{\gm} ( \vk^2 + \lm^2 ) \, ds.    
\end{align}
The functional is well known as the modified total squared curvature. 

Let $\gm_0 : I \to \R^2$ be a smooth planar open curve satisfying the following: 
\begin{align*}
\gm_0(0)= (0,0),\,\, \gm_0(1)= (R, 0),\,\, \gm_{0s}(0)= \tau_0,\,\, \gm_{0s}(1)= \tau_1. 
\end{align*}
For such curve $\gm_0$, we consider the following initial boundary value problem: 
\begin{align} \label{clamped}
\begin{cases}
& \pd_t \gm = (-2 \pd^2_s \vk - \vk^3 + \lm^2 \vk) \bn \qquad \quad 
\text{in} \quad I \times [0, \infty), \\
& \gm(0,t)= (0,0),\,\,\, \gm(1,t)= (R, 0),\\ 
& \gm_s(0,t)= \tau_0,\,\,\, \gm_s(1,t)= \tau_1 \qquad \quad \,\,\,\,\,
\text{in} \quad [0, \infty), \\ 
& \gm(x,0)= \gm_0(x) \qquad \qquad \qquad \qquad \,\,\,\, 
\text{in} \quad I.   
\end{cases}
\end{align}
The purpose of this subsection is to prove a convergence of a solution of \eqref{clamped} to an 
equilibrium as $t \to \infty$. 
Regarding the problem \eqref{clamped}, C.-C. Lin obtained the following result: 
\begin{prop}{\rm (\cite{lin})} \label{c-c-lin}
For any prescribed constant $\lm \neq 0$ and smooth initial curve $\gm_0$ with finite length, 
there exists a global smooth solution $\gm$ of \eqref{clamped}. Moreover, after reparametrization 
by arc length, the family of curves $\{ \gm(t) \}$ subconverges to $\gm_\infty$, which is an equilibrium 
of the energy functional \eqref{mtsc}.  
\end{prop}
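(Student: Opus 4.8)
The plan is to follow the standard program for fourth-order geometric evolution equations: first establish short-time existence, then derive a priori estimates that are uniform in time, upgrade these to global existence by a continuation argument, and finally extract a subsequential limit which the energy identity forces to be an equilibrium. For short-time existence, I would write $\gm$ in a suitable parametrization (e.g.\ as a normal graph) over a smooth reference curve, so that \eqref{s-s} becomes a quasilinear fourth-order parabolic system whose principal part is, up to sign, $-2\pd_s^4$; its linearization is strictly parabolic, and standard parabolic theory (cf.\ \cite{polden,dziuk}) applied via linearization and a contraction argument yields a unique smooth solution on a maximal interval $[0,T)$ compatible with the boundary data \eqref{fixed-p}--\eqref{clamped-cond}.

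Next come the a priori estimates, which are the heart of the matter. Since the flow \eqref{s-s} under the clamped boundary condition is the $L^2$-gradient flow of $\mE_\lm$ (as recorded above), the energy identity \eqref{energy-eq} gives $\frac{d}{dt}\mE_\lm(\gm(t))\le 0$, hence $\mE_\lm(\gm(t))\le \mE_\lm(\gm_0)$ for all $t$. This immediately controls the length from above, $\lm^2\,\mL(\gm(t))\le \mE_\lm(\gm_0)$, while the fixed endpoints \eqref{fixed-p} force $\mL(\gm(t))\ge R>0$; it also bounds $\int_\gm \vk^2\,ds$ uniformly in time. The key remaining step is to promote these to uniform bounds on all the quantities $\int_\gm (\pd_s^m \vk)^2\,ds$. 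I would derive the evolution equation for each such quantity and use Gagliardo--Nirenberg interpolation inequalities on curves of controlled length, estimating higher norms of $\vk$ by $\int_\gm \vk^2\,ds$ plus a small multiple of the top-order dissipative term, so as to absorb the nonlinear contributions and close the estimates. Together with the $L^2$-in-time control of $\pd_t\gm$ from \eqref{esso}, this produces exactly the uniform bounds \eqref{U-B-s}.

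With these bounds in hand, global existence follows by the usual continuation argument: if $T<\infty$, then $\vk$ and all its derivatives stay bounded up to $T$, so $\gm(t)$ converges in $C^\infty$ as $t\to T$ to an admissible curve from which the flow can be restarted, contradicting maximality; hence $T=+\infty$. For the subconvergence, the uniform estimates \eqref{U-B-s} permit an application of Arzel\`a--Ascoli, yielding a sequence $t_j\to\infty$ and a limit curve $\gm_\infty$ with $\gm(t_j)\to\gm_\infty$ in $C^\infty$ after reparametrization by arclength. Since \eqref{esso} forces $\Lg{\nabla\mE_\lm(\gm(t))}\to 0$ along a suitable sequence of times (exactly as in Lemma \ref{l-to-stationary}), the limit satisfies $\nabla\mE_\lm(\gm_\infty)=0$, so $\gm_\infty$ is an equilibrium of \eqref{mtsc}.

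The main obstacle is the second step: closing the uniform-in-time higher-order estimates. The evolution equations for $\int_\gm(\pd_s^m \vk)^2\,ds$ generate many cubic and higher nonlinear terms, and one must verify that the interpolation inequalities genuinely allow these to be absorbed into the dissipative top-order term without producing a constant that grows in $t$ — in particular, that no term drives the curvature to concentrate or the length to degenerate. Controlling the boundary contributions that arise from integration by parts under the clamped condition \eqref{clamped-cond}, and confirming they do not spoil these estimates, is the most delicate technical point of the argument.
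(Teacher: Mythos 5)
First, a point of comparison: the paper does not actually prove Proposition \ref{c-c-lin} --- it is imported verbatim from \cite{lin}. The only place where the authors carry out an argument of this type themselves is Appendix B (Theorem \ref{g-exe-g-flow}), for the symmetric Navier boundary condition, and that proof is the right benchmark for your sketch. At the level of the overall program --- short-time existence by quasilinear parabolic theory, the energy identity giving $\mE_\lm(\gm(t))\le\mE_\lm(\gm_0)$ and hence bounds on $\mL(\gm(t))$ and $\int_\gm\vk^2\,ds$, interpolation to close higher-order estimates, continuation, and Arzel\`a--Ascoli plus \eqref{esso} to get subconvergence to an equilibrium --- your plan coincides with what is done there and in \cite{lin}.

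There is, however, a concrete gap at exactly the step you flag as delicate, and the way you propose to attack it would fail as stated. You propose to ``derive the evolution equation for each quantity $\int_\gm(\pd_s^m\vk)^2\,ds$'' and absorb the nonlinearities. Under the clamped condition \eqref{fixed-p}--\eqref{clamped-cond} the prescribed data are $\gm$ and $\gm_s$ at the endpoints; nothing is prescribed about $\vk$ or any $\pd_s^j\vk$ there. Consequently the integrations by parts needed to reach the dissipative term $-\int_\gm(\pd_s^{m+2}\vk)^2\,ds$ produce boundary contributions of the form $\bigl[\pd_s^{j}\vk\,\pd_s^{j'}\vk\bigr]_0^1$ with $j,j'\ge 1$, which are of top order and are not controlled by the boundary data or by interpolation; the Gronwall argument does not close. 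The mechanism that resolves this --- in \cite{lin} and, in the Navier case, in Appendix B of this paper --- is to run the energy method on \emph{time} derivatives rather than arclength derivatives: since the boundary conditions are independent of $t$, all time derivatives of the fixed boundary quantities vanish on $\pd I$ (cf.\ Lemma \ref{vanishing}), so one applies the identity of Lemma \ref{l-e-method} to $\phi=\pd_t^mV^\lm$ (or to $\pd_t^m\gm$ in the clamped setting), for which the boundary terms genuinely disappear, and only afterwards converts the resulting bounds into bounds on $\pd_s^{4m+2}\vk$ via the structural identity \eqref{V-t-m} together with the interpolation inequalities. Without this substitution of $\pd_t^m$ for $\pd_s^m$ your second step does not go through, so the proposal as written is a correct strategy with its central technical lemma missing.
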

It follows from the proof of Proposition \ref{c-c-lin} that Assumption \ref{U-B} holds. 

Let $\mS$ be a set of all stationary solutions of \eqref{clamped}, i.e., all open curves satisfying 
\begin{align} \label{st-cp}
\begin{cases}
& 2 \pd^2_s \vk + \vk^3 - \lm^2 \vk =0 \quad \text{in} \quad I, \\
& \gm(0)= (0,0),\,\, \gm(1)= (R, 0),\,\, \gm_s(0)= \tau_0,\,\, \gm_s(1)= \tau_1.   
\end{cases}
\end{align}
We denote $\Sigma_A$ a subset of $\mS$ defined by 
\begin{align*}
\Sigma_A= \{ \tilde{\gm} \in \mS \mid \mE_\lm(\tilde{\gm}) = A \}. 
\end{align*}
In order to apply Theorem \ref{main-thm} to the problem \eqref{clamped}, 
we prove that Assumption \ref{A-B} holds, i.e., the set $\Sigma_A$ is finite for any $A \in \R$. 

\begin{lem} \label{cp-finite}
The set $\Sigma_A$ is finite for each $A \in \R$. 
\end{lem}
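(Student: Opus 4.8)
The plan is to reduce a stationary curve to its curvature, to reconstruct it through the Frenet equations, and then to combine the first integral \eqref{f-integ}, the energy constraint, and real analyticity. First I would encode the data: any $\tilde{\gm}\in\Sigma_A$, parametrized by arclength $s\in[0,L]$ with $L=\mL(\tilde{\gm})$, has curvature $\vk$ solving $2\vk''+\vk^3-\lm^2\vk=0$, hence satisfying $(\vk')^2+F(\vk)=E$ for some constant $E=E(\tilde{\gm})$. Writing $\vs(s)$ for the tangent angle, so that $\tilde{\gm}_s=(\cos\vs,\sin\vs)$ and $\vs'=\vk$, the curve is recovered by $\tilde{\gm}(s)=\int_0^s(\cos\vs,\sin\vs)\,d\sigma$. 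Since $\tilde{\gm}(0)=(0,0)$ and $\tilde{\gm}_s(0)=\tau_0$ are prescribed, the solution is then determined by the continuous parameters $E$, $\vk(0)$, $L$ (together with the sign of $\vk'(0)$), subject to the remaining conditions $\tilde{\gm}(L)=(R,0)$ and $\tilde{\gm}_s(L)=\tau_1$.

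Next I would bound these parameters. From $\mE_\lm(\tilde{\gm})=\int_0^L\vk^2\,ds+\lm^2 L=A$ with both terms nonnegative one gets $L\le A/\lm^2$, while $\tilde{\gm}$ joins two points at distance $R$, so $L\ge R$; thus $L$ lies in the compact interval $[R,A/\lm^2]$. To bound $E$ I would invoke Lemma \ref{l-p-E-div}: as $E\to\infty$ the period $L(E)\to 0$ whereas $\int_0^{L(E)}\vk^2\,ds\to\infty$, so for every sufficiently large $E$ the curve of length $L\ge R$ contains a full period of $\vk$, giving $\int_0^L\vk^2\,ds\ge\int_0^{L(E)}\vk^2\,ds\to\infty$ and contradicting $\int_0^L\vk^2\,ds\le A$. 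Hence $E$ ranges in a compact interval $[-\lm^4/4,E_0]$, the lower bound coming from $(\vk')^2=E-F(\vk)\ge 0$. Since $L(E)$ is bounded below by a positive constant on this range, the number $n$ of periods of $\vk$ along the curve is bounded.

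Finally I would establish finiteness. The constant-curvature equilibria $\vk\equiv 0,\pm\lm$ are treated directly and contribute only finitely many curves, so I may assume $\vk$ is oscillating. For each of the finitely many discrete configurations (the period count $n$, the signs, and the monotonicity branch of the terminal partial arc) the closing conditions $\tilde{\gm}(L)=(R,0)$, $\tilde{\gm}_s(L)=\tau_1$ and the energy identity $\mE_\lm=A$ become real-analytic relations in $(E,\vk(0),L)$, by the analyticity results of Appendix \ref{appendix}. Using the energy identity to express $L$ and one closing condition to eliminate the phase $\vk(0)$, I would reduce the system to a single real-analytic equation in the one variable $E$ on the compact interval $[-\lm^4/4,E_0]$; such an equation has only finitely many roots, each determining $\tilde{\gm}$ up to the finitely many discrete choices, and summing over configurations yields that $\Sigma_A$ is finite.

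The main obstacle is precisely this last reduction: one must show that the resulting analytic function of $E$ does not vanish identically, equivalently that no continuum of clamped stationary curves shares the boundary data at a fixed energy level. It is exactly for this \emph{non-degeneracy} that the explicit analytic dependence of the period, total-turning, and endpoint maps on $E$, proved in the appendix, is indispensable; the bounds on $L$ and $E$ from the previous steps are what make the resulting analytic problem one posed on a compact parameter set, so that isolated zeros are automatically finite in number.
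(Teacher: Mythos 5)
Your setup coincides with the paper's: you encode each stationary curve by the first integral $(\vk')^2+F(\vk)=E$, bound the length $L\in[R,A/\lm^2]$ from the energy identity, and bound $E$ from above via Lemma \ref{l-p-E-div}, exactly as in the paper. The gap is in the final step, and you have named it yourself without closing it. Finiteness of the zeros of your real-analytic closing relation in $E$ on a compact interval requires that this relation \emph{not vanish identically}, i.e.\ that no analytic one-parameter family of clamped stationary curves with energy $A$ exists; compactness of the parameter range does nothing to rule this out, and the analyticity results of the appendix are by themselves perfectly compatible with the closing conditions being identically satisfied. Declaring that the appendix makes the non-degeneracy ``indispensable'' is not a proof of it.

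The paper supplies precisely the missing argument, and it is the heart of the proof. Assuming $\Sigma_A$ is infinite, one extracts an accumulation point $E_\infty$ of the parameters $E_n$. Because the closing conditions $\gm(s_n,E_n)=(R,0)$, $\gm_s(s_n,E_n)=\tau_1$ hold along a sequence accumulating at $E_\infty$, the implicit function theorem produces an analytic $s(E)$ near $E_\infty$ with $\gm(s(E),E)=(R,0)$ and $\gm_s(s(E),E)=\tau_1$ identically on a neighborhood $U$; an analytic-continuation argument (using that $s(E)$ cannot blow up at a finite $\tilde E$ while $\mE_\lm(\gm(\cdot,E))\equiv A$ stays bounded, since $\mE_\lm\ge\lm^2 s(E)$) then extends this identity to all of $U\cup(E_\infty,\infty)$. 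This forces $\mE_\lm(\gm(\cdot,E))=A$ for arbitrarily large $E$, contradicting the divergence of the energy as $E\to\infty$ given by Lemma \ref{l-p-E-div}. In other words, the identical vanishing of your analytic function is excluded not locally but by propagating it to $E\to\infty$ and colliding with the energy blow-up; without this step (or an equivalent explicit non-degeneracy computation) your argument does not conclude.
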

\begin{proof}
Suppose not, there exist a constant $A$ and a sequence of planar open curves 
$\{ \gm_n \}^\infty_{n=1} \subset \Sigma_A$. 
Let fix a family of planar curves $\gm(s, E)$ such that 
\begin{align*}
\left( \dfrac{d \vk}{ds} \right)^2 + F(\vk)= E, \quad \gm(0,E)=(0,0), \quad \gm_s(0,E)= \tau_0,  
\end{align*}
and 
\begin{align*}
\gm(s, E_n)= \gm_n \quad \text{on} \quad [0, \mL(\gm_n)],  
\end{align*}
where $\mL(\gm_n)$ denotes the length of $\gm_n$. 
Remark that $\gm(s,E)$ is analytic in $s$ and $E$ on $\R \times (-\lm^4/4, \infty)$. 
In particular, letting $s_n= \mL(\gm_n)$, we have 
\begin{align} \label{at_s_n}
\gm(s_n,E_n)=(R,0), \quad 
\gm_s(s_n,E_n)=\tau_1. 
\end{align}
If $E_n \to \infty$ as $n \to \infty$, then Lemma 4.4 implies 
\begin{align} \label{period-shrink}
L(E_n) \to 0 \quad \text{as} \quad n \to \infty
\end{align}
and Lemma 3.1 yields that 
\begin{align} \label{tsc_n_div}
\int^{L(E_n)}_0 \vk^2_n \, ds \to \infty \quad \text{as} \quad  n \to \infty,  
\end{align}
where $\vk_n= \vk(s, E_n)$. 
Although \eqref{period-shrink}-\eqref{tsc_n_div} yields that $\mE_\lm(\gm(\cdot, E_n)) \to \infty$ as $n \to \infty$, 
this contradicts $\mE_\lm(\gm(\cdot, E_n)) = A$. 
Thus there exists a constant $E^*$ such that $E_n < E^*$, i.e., $\{E_n \}^\infty_{n=1}$ is bounded sequence. 
Moreover the fact $\{ \gm_n \}^\infty_{n=1} \subset \Sigma_A$ implies $R \leq s_n \leq A/\lm^2$, 
i.e., $\{ s_n \}^\infty_{n=1}$ is also bounded sequence.   
Hence there exist subsequences $\{E_{n_j} \}^\infty_{j=1} \subset \{E_n \}^\infty_{n=1}$ and 
$\{s_{n_j} \}^\infty_{j=1} \subset \{s_n \}^\infty_{n=1}$ and constants $E_\infty$ and $s_\infty$ 
such that $E_{n_j} \to E_\infty$ and $s_{n_j} \to s_\infty$ as $j \to \infty$. 
In the following we write $\{E_n \}^\infty_{n=1}$ and $\{s_n \}^\infty_{n=1}$ instead of 
$\{E_{n_j} \}^\infty_{j=1}$ and $\{s_{n_j} \}^\infty_{j=1}$ for short. 

We prove that there exist a neighborhood $U$ of $E_\infty$ and a function $s : U \to \R$ such that, 
for any $E \in U$, 
\begin{align} \label{s-func}
\gm(s(E),E)=(R,0), \quad 
\gm_s(s(E),E)=\tau_1. 
\end{align}
If $\tau_1 \cdot {\bm e}_1 \neq 0$, then we define a function $\Phi : \R \times (-\lm^4/4, \infty) \to \R$ as 
$\Phi(s,E)= \gm_1(s,E)$, where ${\bm e}_1= (1,0)$ and $\gm=(\gm_1, \gm_2)$. 
Since $\Phi(s_\infty, E_\infty)=R$ and $\Phi_s(s_\infty, E_\infty)=\tau_1 \cdot {\bm e}_1 \neq 0$, 
the implicit function theorem yields that there exist a neighborhood $U$ of $E_\infty$ and 
a function $s : U \to \R$ such that, for any $E \in U$,  
\begin{align} \label{s-func-2}
\gm_1(s(E), E)=R. 
\end{align}
It follows from \eqref{at_s_n} and \eqref{s-func-2} that $s(E_n)= s_n$ holds for any $n \in \N$. 
Moreover the analyticity of $\gm$ implies that $s(E)$ is analytic on $U$.  
Combining the analyticity of $s(E)$ with 
\begin{align*}
\gm(s(E_n), E_n)= (R,0), \quad \gm_s(s(E_n), E_n)= \tau_1, 
\end{align*}
we observe that $s(E)$ satisfies \eqref{s-func} on $U$. 
If $\tau_1 \cdot {\bm e}_1 = 0$, then it is sufficient to define a function $\Phi(s,E)$ as $\Phi(s,E)= \gm_2(s,E)$. 

Let us define a function $d : (-\lm^4/4, \infty) \to \R$ as 
\begin{align*}
d(E)= \min_{s \in \R}{ \av{\gm(s,E) - (R,0)}^2 }. 
\end{align*}
Remark that the function $d(E)$ is analytic and $d(E)=0$ on $U$. 
We claim that $d(E)$ is analytic on $(-\lm^4/4, \infty)$. 
Suppose that there exists a maximal open set $V \supset U$ such that $d(E)$ is analytic on $V$. 
Then we see that $d(E)>0$ in $\pd V$. 
For, if $d(E)=0$ in $\pd V$, then the similar argument as above yields that $d(E)$ is analytic on 
a neighborhood of $\pd V$. This contradicts that $V$ is maximal. 
On the other hand, since $d(E)$ is analytic and $d(E)=0$ on $U \subset V$, 
we observe that $d(E)=0$ on $V$. 
Therefore $d(E)$ is analytic on $(-\lm^4/4, \infty)$. 

Since $d(E)=0$ on $U$, the analyticity yields that $d(E)=0$ for any $E \in (-\lm^4/4, \infty)$. 
Thus there exists an extension $s(E)$ such that 
\begin{align*}
\gm(s(E), E)=(R,0) \quad \text{for all} \quad E \in (-\lm^4/4, \infty),  
\end{align*}
where we still denote the extension as $s(E)$, for short. 

We claim that $s(E)$ is analytic on $U \cup (E_\infty, \infty)$. 
Suppose not, there exists a constant $\tilde{E}$ such that $s(E)$ is not extended analytically for $E \geq \tilde{E}$. 
Then it holds that 
\begin{align} \label{s-blow-up}
s(E) \to \infty \quad \text{as} \quad E \nearrow \tilde{E}. 
\end{align}
Since 
\begin{align*}
\mE_\lm(\gm(\cdot, E))= \int^{s(E)}_0 \vk(s, E)\, ds + \lm^2 s(E) \quad \text{for any} \quad E \in U \cup (E_\infty, \tilde{E}), 
\end{align*}
\eqref{s-blow-up} is equivalent to 
\begin{align*}
\mE_\lm(\gm(\cdot, E)) \to \infty \quad \text{as} \quad E \nearrow \tilde{E}. 
\end{align*}
This contradicts that $\mE_\lm(\gm(\cdot, E)) =A$ for any $E \in U \cup (E_\infty, \tilde{E})$. 
Therefore we see that $s(E)$ is extended analytically on $U \cup (E_\infty, \infty)$. 

We now obtain a contradiction. 
Since the analyticity of $s(E)$ implies that \eqref{s-func} holds for all $E \in U \cup (E_\infty, \infty)$, 
it follows that $\gm(s, E) \in \Sigma_A$ for all $E \in U \cup (E_\infty, \infty)$, i.e., 
\begin{align}\label{E-iden}
\mE_\lm(\gm(\cdot, E))= A \quad \text{for any} \quad E \in U \cup (E_\infty, \infty). 
\end{align}
However \eqref{E-iden} contradicts that 
\begin{align*}
\mE_\lm(\gm(\cdot, E)) \to \infty \quad \text{as} \quad E \to \infty. 
\end{align*}
We complete the proof.  
\end{proof}

Lemma \ref{cp-finite} implies that one can apply Theorem \ref{main-thm} to \eqref{clamped}. 
Then the following result is proved. 
\begin{thm} \label{appli-1}
Let $\lm \neq 0$. Let $\gm$ be a smooth solution of \eqref{clamped} obtained by Proposition \ref{c-c-lin}. 
Then, as $t \to \infty$, the solution $\gm$ converges to a solution of \eqref{st-cp} in the $C^\infty$-topology. 
\end{thm}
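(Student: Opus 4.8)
The plan is to deduce Theorem \ref{appli-1} directly from the abstract convergence result, Theorem \ref{main-thm}, by checking that its three structural hypotheses --- Assumptions \ref{Asum-B}, \ref{U-B}, and \ref{A-B} --- are all satisfied for the clamped problem \eqref{clamped}. Since Proposition \ref{c-c-lin} already furnishes a global smooth solution that subconverges, the only genuine work is to identify the correct energy and confirm that the abstract framework applies.

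First I would record the compatibility (Assumption \ref{Asum-B}). Computing the first variation of $\mE_\lm(\gm)=\int_\gm(\vk^2+\lm^2)\,ds$ produces the Euler--Lagrange operator $\nabla\mE_\lm = (2\pd_s^2\vk + \vk^3 - \lm^2\vk)\bn$ together with boundary terms involving an admissible variation $\vp$ and its arclength derivative at $\pd I$. Under the clamped condition \eqref{fixed-p}--\eqref{clamped-cond} both $\gm$ and its unit tangent $\gm_s$ are prescribed at the endpoints, so every admissible variation satisfies $\vp = 0$ and $\vp_s = 0$ on $\pd I$; this annihilates the boundary contributions and shows that \eqref{s-s} with the clamped condition is exactly the $L^2$-gradient flow of $\mE_\lm$, whence the energy identity \eqref{energy-eq} holds. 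This is the statement already asserted after \eqref{mtsc}.

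Next I would invoke Assumption \ref{U-B}. The global existence and the uniform-in-time a priori estimates established in the proof of Proposition \ref{c-c-lin} provide precisely the bounds \eqref{U-B-s}: a uniform sup-bound on the normal velocity $\pd_t\gm$ and uniform $L^2$-bounds on all arclength derivatives $\pd_s^m\gm$, independent of $t$. Together with the continuity of $\gm\mapsto\Lg{\nabla\mE_\lm(\gm)}$ in the $C^\infty$-topology --- a consequence of the smoothness and of the explicit polynomial form of $\nabla\mE_\lm$ in $\vk$ and its derivatives --- this verifies the regularity hypothesis. Finally, Assumption \ref{A-B}, i.e. finiteness of $\Sigma_A$ for every $A$, is exactly the content of Lemma \ref{cp-finite}, which has already been proved.

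With all three assumptions in hand, Theorem \ref{main-thm} applies verbatim to the solution $\gm$ of \eqref{clamped}: it yields a smooth stationary curve $\tilde\gm$, that is, a solution of \eqref{st-cp}, such that $\gm(\cdot,t)\to\tilde\gm$ in the $C^\infty$-topology as $t\to\infty$. The main obstacle is not in this final deduction, which is essentially a citation, but in the two ingredients feeding it: the hard analytic work lies in Lemma \ref{cp-finite} (the implicit-function-theorem and analytic-continuation argument ruling out one-parameter families of clamped equilibria at a fixed energy) and in the uniform estimates of Proposition \ref{c-c-lin} underlying Assumption \ref{U-B}. Once these are granted, Theorem \ref{appli-1} follows immediately.
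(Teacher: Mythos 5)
Your proposal matches the paper's argument exactly: the paper likewise notes that Assumption \ref{Asum-B} holds because the clamped conditions annihilate the boundary terms in the first variation of $\mE_\lm$, that Assumption \ref{U-B} follows from the estimates in the proof of Proposition \ref{c-c-lin}, and that Assumption \ref{A-B} is Lemma \ref{cp-finite}, after which Theorem \ref{main-thm} is cited verbatim. No discrepancies.
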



\subsection{Zero curvature boundary condition} \label{DBC}
In this subsection, we impose that the curvature of $\gm(x,t)$ is zero at the boundary of $I$, i.e.
\begin{align} \label{D-curvature}
\vk(0,t)= \vk(1,t)=0 \quad \text{in} \quad (0,\infty).  
\end{align}
We shall consider the initial value problem for \eqref{s-s} with the boundary conditions 
\eqref{fixed-p}-\eqref{D-curvature} 
\begin{align} \label{n-o}
\begin{cases}
& \pd_t \gm = (-2 \pd^2_s \vk - \vk^3 + \lm^2 \vk) \bn \qquad \quad  
\text{in} \quad I \times [0,\infty), \\
& \gm(0,t)= (0,0),\,\, \gm(1,t)= (R, 0),\\
& \vk(0,t)= \vk(1,t)=0 \qquad \qquad \qquad \,\,\,\,
\text{in} \quad [0,\infty), \\ 
& \gm(x,0)= \gm_0(x) \qquad \qquad \qquad \qquad \,\,\,\,
\text{in} \quad I. 
\end{cases}
\end{align}
Remark that $\gm_0$ is a smooth planar curve satisfying  
\begin{align} \label{cond-1}
\av{{\gm_0}'(x)} \equiv 1, \,\, 
\gm_0(0) = (0, 0), \,\, 
\gm_0(1)=(R,0), \,\, 
\vk_0(0)= \vk_0(1)=0.  
\end{align}

The purpose of this subsection is applying Theorem \ref{main-thm} to the problem \eqref{n-o} 
and proveing that the solution $\gm(x,t)$ converges to a stationary solution as $t \to \infty$. 

Regarding Assumption \ref{Asum-B}, it is easy to check that the flow \eqref{s-s} with the 
boundary condition \eqref{fixed-p}-\eqref{D-curvature} is the $L^2$-gradient flow for the 
functional $\mE_\lm$ (see Section \ref{appendix-b}). 

By the proof of the following Proposition, we see that Assumption \ref{U-B} holds. 
\begin{prop}{\rm (\cite{novaga-okabe}) } \label{n-o-prop}
Let $\gm_0(x)$ be a planar curve satisfying \eqref{cond-1}. 
Then there exist a family of smooth planar curves 
$\gm(x,t) : I \times [0, \infty) \to \R^2$ satisfying \eqref{n-o}. 
Moreover, there exist sequence $\{ t_j \}^\infty_{j=1}$ 
and a smooth curve $\tilde{\gm} : I \to \R^2$ such that 
$\gm(\cdot,t_j)$ converges to $\tilde{\gm}(\cdot)$ as $t_j \to \infty$ up to 
a reparametrization. Moreover the curve $\tilde{\gm}$ satisfies  
\begin{align} \label{ep}
\begin{cases}
& 2 \pd^2_{s} \tilde{\vk} + \tilde{\vk}^3 - \lm^2 \tilde{\vk}=0 \quad \text{in} \quad I, \\ 
& \tilde{\gm}(0) = (0, 0), \,\, \tilde{\gm}(1)=(R,0), \,\, 
\tilde{\vk}(0)= \tilde{\vk}(1)=0. 
\end{cases}
\end{align}
\end{prop}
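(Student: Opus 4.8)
The plan is to treat \eqref{n-o} as the $L^2$-gradient flow of the modified total squared curvature $\mE_\lm(\gm)=\int_\gm(\vk^2+\lm^2)\,ds$ and to establish, in order, short-time existence, uniform-in-time a priori estimates, global existence, and finally subconvergence. Since the principal part $-2\pd_s^2\vk\,\bn$ makes \eqref{s-s} a fourth-order quasilinear parabolic system, I would first fix the tangential degrees of freedom (by passing to a normal-graph representation over $\gm_0$, or via a DeTurck-type reparametrization) so as to obtain a genuinely parabolic equation supplemented, at each endpoint, by the two conditions coming from $\gm(0,t)=(0,0)$, $\gm(1,t)=(R,0)$ and $\vk(0,t)=\vk(1,t)=0$. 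After checking that $\gm_0$ satisfies the attendant compatibility conditions at $\pd I$, short-time existence and smoothness on some interval $[0,T_0)$ follow from standard parabolic theory (analytic semigroups, or Solonnikov-type estimates for the linearized boundary value problem).

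The second step is the dissipation identity
\begin{align*}
\dfrac{d}{dt}\mE_\lm(\gm(t)) = -\int_\gm |\pd_t\gm|^2\,ds,
\end{align*}
which I would justify by verifying that $\vk(0,t)=\vk(1,t)=0$ is precisely the natural boundary condition for $\mE_\lm$ with fixed endpoints, so that the boundary contributions in the first variation vanish (this is the content referenced to Section \ref{appendix-b}). The identity gives $\mE_\lm(\gm(t))\le\mE_\lm(\gm_0)$, hence the two basic bounds $\Lg{\vk}^2\le\mE_\lm(\gm_0)$ and $\lm^2\mL(\gm(t))\le\mE_\lm(\gm_0)$; together with the lower bound $\mL(\gm(t))\ge R$ forced by the fixed endpoints, the length remains in a fixed compact interval for all time.

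The heart of the argument is to upgrade these into \emph{time-uniform} bounds on $\int_\gm(\pd_s^m\vk)^2\,ds$ for every $m$. I would write down the evolution equation for each such quantity, estimate the resulting polynomial curvature terms by Gagliardo--Nirenberg interpolation inequalities on curves, and close the resulting differential inequality by Gronwall, obtaining bounds depending only on $\mE_\lm(\gm_0)$ and not on $t$. The delicate point is that the integrations by parts generate boundary terms at $\pd I$: the condition $\vk=0$ alone does not annihilate all of them, so I would differentiate the boundary conditions in time and use the flow equation to extract the induced relations on $\pd_s^2\vk$ and on the higher derivatives of $\vk$ at $\pd I$, and then either cancel or absorb the boundary contributions. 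These uniform estimates prevent the curvature from blowing up in finite time, so the local solution extends to all of $[0,\infty)$; in particular they yield \eqref{U-B-s}, which is exactly Assumption \ref{U-B}.

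Finally, since $\mE_\lm$ is bounded below, the dissipation identity integrated in time gives $\int_0^\infty\!\int_\gm|\pd_t\gm|^2\,ds\,dt\le\mE_\lm(\gm_0)-\inf\mE_\lm<\infty$, so there is a sequence $t_j\to\infty$ with $\Lg{\nabla\mE_\lm(\gm(t_j))}\to 0$. Reparametrizing each $\gm(t_j)$ by arc length and invoking the uniform bounds of the previous step, Arzel\`a--Ascoli produces a subsequence converging in the $C^\infty$-topology to a smooth curve $\tilde{\gm}$ (this is the meaning of ``up to a reparametrization''); passing to the limit in the equation and in the boundary data yields $\nabla\mE_\lm(\tilde{\gm})=0$ together with $\tilde{\gm}(0)=(0,0)$, $\tilde{\gm}(1)=(R,0)$, $\tilde{\vk}(0)=\tilde{\vk}(1)=0$, that is \eqref{ep}. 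I expect the main obstacle to be the third step: securing the time-uniform higher-order curvature estimates while keeping the boundary terms controlled, precisely because the zero-curvature condition does not by itself kill all of them and one must carefully derive and exploit the compatible conditions satisfied at $\pd I$ by the higher derivatives of $\vk$.
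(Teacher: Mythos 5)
Your overall architecture --- natural boundary conditions giving the gradient-flow/dissipation identity, short-time existence by parabolic theory, higher-order a priori estimates closed by interpolation, then Arzel\`a--Ascoli on arclength reparametrizations --- is the same as the one this paper uses; note that the statement itself is imported from \cite{novaga-okabe}, and the closest in-paper analogue is Theorem \ref{g-exe-g-flow} in Appendix B, which treats the more general condition $\vk=\va$ on $\pd I$ (your case is $\va=0$). The one substantive divergence is exactly at the point you yourself flag as the main obstacle. You propose to estimate $\int_\gm(\pd_s^m\vk)^2\,ds$ directly and to fight the boundary terms by differentiating the boundary conditions in time and ``cancelling or absorbing'' them. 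The paper sidesteps this entirely by changing the quantity being estimated: it works with $\int_\gm(\pd_t^m V^\lm)^2\,ds$, where $V^\lm=2\pd_s^2\vk+\vk^3-\lm^2\vk$ is the normal velocity. Lemma \ref{vanishing} shows that the fixed-endpoint and fixed-curvature conditions force $V^\lm=0$, $\pd_s^2V^\lm=0$, and all their time derivatives to vanish on $\pd I$, so the integration by parts in Lemma \ref{l-e-method} produces \emph{no} boundary contributions at all; the spatial derivatives $\pd_s^{4m+2}\vk$ are then recovered from $\pd_t^mV^\lm$ through the structural identity \eqref{V-t-m} plus interpolation. This is the decisive device your sketch is missing, and without it the head-on cancellation of boundary terms for $\pd_s^m\vk$ is genuinely delicate (the induced boundary relations involve high odd-order derivatives of $\vk$ that are not controlled a priori). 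A second, smaller caveat: you assert that Gronwall yields bounds independent of $t$. The Appendix B argument only produces the linearly growing bound \eqref{bd-V-t-m}, which suffices to exclude finite-time breakdown and hence gives global existence, but the genuinely time-uniform estimates required for Assumption \ref{U-B} and for the subconvergence statement are taken from \cite{novaga-okabe} and require an extra absorption argument beyond a naive Gronwall inequality.
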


Let $\mS$ be a set of all stationary solutions, i.e., a set of all planar open curves satisfying \eqref{ep}. 
And for each $A \in \R$, let us define the set $\wSigma_A$ of $\mS$ as follows:  
\begin{align*} 
\wSigma_A = \{ \tilde{\gm} \in \mS \mid \mE_\lm(\tilde{\gm}) \leq A \}. 
\end{align*}
By making use of Lemma \ref{l-p-E-div},  we prove that the set $\wSigma_A$ is finite for any $A \in \R$: 
\begin{lem} \label{n-o-finite}
The set $\wSigma_A$ is finite for any $A \in \R$. 
\end{lem}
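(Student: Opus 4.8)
My plan is to mimic the structure of the proof of Lemma \ref{cp-finite}, adapting it to the zero curvature boundary condition, while also exploiting the extra freedom provided by the inequality $\mE_\lm \le A$ defining $\wSigma_A$. The key difference from the clamped case is that the boundary data here are $\vk(0)=\vk(1)=0$ rather than prescribed tangent vectors, so the natural parametrization of stationary solutions changes. I would argue by contradiction: suppose $\wSigma_A$ is infinite, and extract a sequence $\{\gm_n\}\subset\wSigma_A$ of distinct stationary solutions. Each $\gm_n$ solves the Euler--Lagrange equation, hence satisfies the first integral \eqref{f-integ} with some energy constant $E_n$, together with the condition $\vk_n(0)=0$ coming from \eqref{ep}.

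The first step is to rule out the case $E_n\to\infty$. Here I would invoke Lemma \ref{l-p-E-div}: as $E\to\infty$, the total squared curvature over one period $\int_0^{L(E)}\vk^2\,ds$ diverges, while the period $L(E)$ shrinks (this is the analog of \eqref{period-shrink}--\eqref{tsc_n_div}). Combined with the constraint that the length $s_n=\mL(\gm_n)$ is bounded above by $A/\lm^2$ (from $\mE_\lm(\gm_n)=\int\vk_n^2\,ds+\lm^2 s_n\le A$), this forces $\mE_\lm(\gm_n)\to\infty$, contradicting $\gm_n\in\wSigma_A$. Hence $\{E_n\}$ is bounded, and since the length is also bounded below by $R$ (the endpoints are distance $R$ apart) and above by $A/\lm^2$, I can pass to subsequences with $E_n\to E_\infty$ and $s_n\to s_\infty$.

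The heart of the argument is the analyticity/continuation step, which I expect to be the main obstacle. As in Lemma \ref{cp-finite}, the curve $\gm(s,E)$ determined by \eqref{f-integ} with $\vk(0)=0$ and the appropriate initial tangent is analytic in $(s,E)$, and I would use the implicit function theorem to solve the endpoint condition $\gm(s(E),E)=(R,0)$ locally near $E_\infty$, obtaining an analytic branch $s(E)$. The zero curvature conditions $\vk(0,E)=\vk(s(E),E)=0$ must then be propagated: since $\vk(0,E)=0$ holds by construction, the remaining scalar condition $\vk(s(E),E)=0$ is analytic in $E$ and vanishes on the accumulation set of the $E_n$, hence vanishes identically on the connected component by the identity principle for analytic functions. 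This is where care is needed, because I must verify that the branch $s(E)$ extends analytically along all of $(E_\infty,\infty)$; the obstruction to continuation would be $s(E)\to\infty$, which by the same length estimate $\mE_\lm=\int\vk^2\,ds+\lm^2 s(E)$ would force the energy to blow up, again contradicting $\mE_\lm\le A$.

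Once the continuation is established, I conclude exactly as in Lemma \ref{cp-finite}: the analytic branch produces a one-parameter family $\gm(\cdot,E)\in\wSigma_A$ for all $E$ in an unbounded interval, so $\mE_\lm(\gm(\cdot,E))\equiv A'$ is constant for some $A'\le A$ along the branch, yet $\mE_\lm(\gm(\cdot,E))\to\infty$ as $E\to\infty$ by the first step — a contradiction. The genuinely delicate point is matching the boundary conditions correctly under this parametrization: unlike the clamped case where the tangent $\tau_0$ fixes the initial direction, here the freedom in the initial tangent angle must be absorbed into the parametrization, and I would need to check that the endpoint and zero-curvature conditions together still cut out a discrete (generically zero-dimensional) set, so that the existence of a continuous branch is genuinely contradictory.
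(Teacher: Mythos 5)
Your proposal transplants the proof of Lemma \ref{cp-finite}, but two of its load-bearing steps do not survive the change of boundary condition, and the point you flag at the end as ``delicate'' is precisely where the paper's proof takes a different route. First, the one-parameter analytic family $\gm(s,E)$ on which you want to run the implicit function theorem is not defined here: in the clamped case the initial tangent $\tau_0$ is prescribed, whereas under \eqref{D-curvature} the initial direction is free, so the stationary solutions a priori form a two-parameter family in $(E,\theta_0)$ and the curves $\gm_n$ need not lie on any single analytic branch parametrized by $E$. The paper's key observation, which replaces your continuation argument entirely, is that $\vk(0)=\vk(1)=0$ forces each stationary curve to consist of an integer number of periods of the curvature, each period translating the curve by the same vector; hence the endpoint condition reduces to the quantization $d(E_n)=R/N_n$ with $N_n\in\N$, where $d(E)=\av{\gm(L(E))-\gm(0)}$, and the initial angle is slaved to $E$ after identifying $\tilde\gm$ with $\mathcal{R}\tilde\gm$. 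Since the $E_n$ are bounded above (Lemma \ref{l-p-E-div}) and away from $0$ (because $L(E)\to\infty$ as $E\to 0$, a case your step 2 omits), the integers $N_{n_k}$ are eventually equal to some $N_\infty$, so $d(E_{n_k})=d(E_\infty)$ for large $k$, and the analyticity of $d$ forces $E_{n_k}=E_\infty$, i.e.\ $\tilde\gm_{n_k}=\tilde\gm_\infty$ --- the desired contradiction.

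Second, your closing contradiction is unavailable: $\wSigma_A$ is defined by $\mE_\lm(\tilde\gm)\le A$, not $=A$, so the energies $\mE_\lm(\gm_n)$ need not coincide and the identity principle does not yield $\mE_\lm(\gm(\cdot,E))\equiv A'$ along any branch. Consequently you can neither rule out $s(E)\to\infty$ at a finite $\tilde E$ (your continuation step borrows exactly this from the clamped proof) nor contradict $\mE_\lm(\gm(\cdot,E))\to\infty$ as $E\to\infty$. The inequality you hoped to ``exploit'' is in fact an obstruction to your method. What makes the paper's argument close is that the quantity constant along the accumulating sequence is $d(E)=R/N_\infty$, a value forced into the discrete set $\{R/N : N\in\N\}$; it is the resulting rigidity of the analytic function $d$ (incompatible with $d(E)\to 0$ as $E\to\infty$, which follows from $L(E)\to 0$), not any constancy of the energy, that does the work. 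To repair your proof you would need to replace the energy-level identity by this period-displacement quantization, at which point you have reproduced the paper's argument.
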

\begin{proof}
To begin with, we identify $\tilde{\gm} \in \mS$ with $\mathcal{R} \tilde{\gm} \in \mS$, 
where  $\mathcal{R}= \left( \begin{smallmatrix} 1 & 0 \\ 0 & -1 \end{smallmatrix} \right)$. 
If $\wSigma_A$ is not finite, there exists a sequence $\{ \tilde{\gm}_n \}^\infty_{n=1} \subset \wSigma_A$. 
Then there exists a constant $E_n \geq 0$ such that $(\tilde{\vk}_n, E_n)$ satisfies \eqref{f-integ} for each $n \in \N$. 
If $E_n \to \infty$ as $n \to \infty$, then Lemma \ref{l-p-E-div} implies that 
\begin{align*}
\int_{\tilde{\gm}_n} \tilde{\vk}^2_n \, ds \to \infty. 
\end{align*}
This contradicts $\{ \tilde{\gm}_n \}^\infty_{n=1} \subset \wSigma_A$. 
Thus there exists a constant $E^*$ such that $E_n < E^*$ holds for any $n \in \N$. 
Moreover, if $E_n \to 0$ as $n \to \infty$, then Lemma \ref{anal-L} implies that $L(E_n) \to \infty$ as $n \to \infty$. 
Then we observe that $\mE_\lm(\tilde{\gm}_n) \to \infty$ as $n \to \infty$. 
This also contradicts $\{ \tilde{\gm}_n \}^\infty_{n=1} \subset \wSigma_A$. 
Hence there exists a positive constant $E_*>0$ such that $E_* < E_n$ for all $n \in \N$. 
Since $\{ E_n \}^\infty_{n=1}$ is a bounded sequence, there exist a constant $E_* \leq E_\infty \leq E^*$ and 
a subsequence $\{ E_{n_k} \}^\infty_{k=1} \subset \{ E_n \}^\infty_{n=1}$ such that 
$E_{n_k} \to E_\infty$ as $k \to \infty$. 
By the definition of $\{ \tilde{\gm}_n \}^\infty_{n=1}$ and $\{ E_n \}^\infty_{n=1}$, 
there exists $\tilde{\gm}_\infty \in \wSigma_A$ such that $(\vk_\infty, E_\infty)$ satisfies \eqref{f-integ}. 
Here we define a function $d= d(E)$ for a planar open curves with the pair $(\vk, E)$ 
satisfying \eqref{f-integ} as 
\begin{align*}
d(E)= \av{\gm(L(E)) - \gm(0)}. 
\end{align*}
In Lemma \ref{anal-L}, we shall prove that $L(E)$ is analytic on $(0, \infty)$. 
Since $\gm(s)$ depends on $s$ analytically, the analyticity of $L(E)$ implies that $d(E)$ is analytic. 
Since $\tilde{\gm}_n \in \mS$, there exists a number $N_n \in \N$ such that 
\begin{align*}
d(E_n) = \dfrac{R}{N_n}. 
\end{align*} 
In particular, there exists a number $N_\infty \in \N$ such that 
\begin{align*}
d(E_\infty) = \dfrac{R}{N_\infty}. 
\end{align*}
Since $d(E_{n_k}) \to d(E_\infty)$ as $ k \to \infty$, 
it must be holds that $N_{n_k}= N_\infty$ for sufficiently large $k \in \N$. 
This means that $d(E_{n_k})= d(E_\infty)$ holds for sufficiently large $k \in \N$. 
The analyticity of $d(\cdot)$ implies that $E_{n_k}= E_\infty$ for sufficiently large $k \in \N$. 
The relation $E_{n_k}= E_\infty$ yields $\tilde{\gm}_{n_k} = \tilde{\gm}_\infty$.  
Since  $\tilde{\gm}_n \in \wSigma_A$ is uniquely determined with respect to $E_n$ by identifying 
$\tilde{\gm}_n$ with $\mathcal{R} \tilde{\gm}_n$, this contradicts the uniqueness. 
\end{proof}

Since Lemma \ref{n-o-finite} implies that Assumption \ref{A-B} holds, 
we see that Theorem \ref{main-thm} yields the following:   
\begin{thm} \label{appl-0}
Let $\gm(x,t) : I \times [0,\infty) \to \R^2$ be a solution of \eqref{n-o}. 
Then there exists a solution $\tilde{\gm}$ of \eqref{ep} such that 
\begin{align*}
\gm(\cdot,t) \to \tilde{\gm}(\cdot) \quad \text{as} \quad t \to \infty
\end{align*}
in the $C^\infty$-topology. 
\end{thm}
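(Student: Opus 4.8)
The plan is to deduce this statement as a direct application of Theorem \ref{main-thm} to the problem \eqref{n-o}; concretely, I would verify the three structural hypotheses --- Assumptions \ref{Asum-B}, \ref{U-B}, and \ref{A-B} --- for the pair $(\mE_\lm, \mB)$ determined by the functional \eqref{mtsc} and the zero curvature boundary conditions \eqref{fixed-p}-\eqref{D-curvature}, and then invoke the theorem.

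First, for Assumption \ref{Asum-B} (Compatibility) I would check that, under \eqref{fixed-p}-\eqref{D-curvature}, the flow \eqref{s-s} is precisely the $L^2$-gradient flow of $\mE_\lm$. The point is that when one computes the first variation of $\mE_\lm$, the boundary terms carry factors of $\vk$ at the endpoints, which vanish by \eqref{D-curvature}; hence the zero curvature condition is natural for $\mE_\lm$ and the energy identity \eqref{energy-eq} holds. This is carried out in Section \ref{appendix-b}.

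Next, Assumption \ref{U-B} (Regularity) is supplied by Proposition \ref{n-o-prop}: it guarantees a global smooth solution $\gm$ of \eqref{n-o}, and its proof provides the uniform-in-time bounds \eqref{U-B-s} on $\|\pd_t \gm(\cdot,t)\|_{L^\infty_{\gamma(t)}}$ and on $\int_\gm |\pd_s^m \gm|^2\,ds$ for every $m$, together with the continuity of $\Lg{\nabla \mE_\lm(\gm)}$ with respect to the $C^\infty$-topology. For Assumption \ref{A-B} (finiteness) I would simply observe the inclusion $\Sigma_A \subset \wSigma_A$, since $\mE_\lm(\tilde{\gm})=A$ implies $\mE_\lm(\tilde{\gm})\le A$; Lemma \ref{n-o-finite} then gives finiteness of $\wSigma_A$, and hence of $\Sigma_A$, for every $A \in \R$.

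With all three assumptions in hand, Theorem \ref{main-thm} produces a smooth stationary curve $\tilde{\gm}$, necessarily solving the equilibrium system \eqref{ep}, to which $\gm(\cdot,t)$ converges in the $C^\infty$-topology as $t\to\infty$. In this assembly there is no real obstacle left; the genuinely delicate step is the finiteness statement itself, Lemma \ref{n-o-finite}, which relies on the analyticity of the period map $L(E)$ and of the endpoint-distance function $d(E)$, combined with the divergence estimate \eqref{p-E-div} of Lemma \ref{l-p-E-div} that forces $E_n$ to stay bounded.
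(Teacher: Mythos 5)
Your proposal is correct and follows the same route as the paper: the paper likewise verifies Assumption \ref{Asum-B} via the first-variation computation of Appendix B (the boundary terms vanish since $\vk=0$ at the endpoints), takes Assumption \ref{U-B} from the proof of Proposition \ref{n-o-prop}, deduces Assumption \ref{A-B} from Lemma \ref{n-o-finite} via the inclusion $\Sigma_A \subset \wSigma_A$, and then invokes Theorem \ref{main-thm}. Your identification of Lemma \ref{n-o-finite} as the genuinely delicate ingredient matches the paper's structure exactly.
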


\subsection{Symmetric Navier boundary condition} 

We now consider the following more general boundary condition for the curvature: 
\begin{align} \label{Navier}
\vk(0)= \vk(1)= \va,  
\end{align}
where $\va \in \R$ is a given constant. The boundary conditions \eqref{fixed-p}-\eqref{Navier} 
is sometimes called the {\it symmetric Navier boundary condition} (e.g., see \cite{BGN, dec-gru}). 
In Section \ref{appendix-b}, we will show that the flow \eqref{s-s} with the symmetric Navier boundary condition 
is the $L^2$-gradient flow of the functional  
\begin{align}
\mE_{\lm, \va}(\gm):= \mE_\lm(\gm) - 2 \va \int_\gm \vk \, ds.  
\end{align}

We now show that the functional $\mE_{\lm, \va}$ is bounded from below whenever $|\alpha| < |\lambda|$. 
\begin{lem} \label{l-E-bd-b}
Let $\va,\lm\in \R$ be such that 
$\lm \ne 0$ and
\begin{equation}\label{vala}
\av{\va} < \av{\lm}.
\end{equation}
Then there exists a positive constant $C=C(\va, \lm)$ such that  
\begin{align} \label{E-bd-b}
\mE_{\lm,\va}(\gm) \geq C \max{\{ \Lg{\vk}^2, \mL(\gm) \}} \qquad {\rm for\ all\ }\gm.
\end{align} 
\end{lem}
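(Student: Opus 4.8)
The plan is to expand $\mE_{\lm,\va}(\gm)$ into its three constituent pieces and to absorb the indefinite cross term into the two manifestly nonnegative pieces by a Cauchy--Schwarz/Young argument; the point is that the hypothesis \eqref{vala} is exactly what makes the resulting quadratic form positive definite. Writing $\Lg{\vk}^2=\int_\gm \vk^2\,ds$ and $\mL(\gm)=\int_\gm 1\,ds$, we have
\begin{align*}
\mE_{\lm,\va}(\gm)= \Lg{\vk}^2 + \lm^2 \mL(\gm) - 2\va \int_\gm \vk\,ds.
\end{align*}

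First I would estimate the cross term by Cauchy--Schwarz,
\begin{align*}
\av{\int_\gm \vk\,ds} \le \Lg{\vk}\, \mL(\gm)^{1/2},
\end{align*}
and then apply Young's inequality with a free parameter $\ve>0$ to obtain $2\av{\va}\,\Lg{\vk}\,\mL(\gm)^{1/2} \le \ve \Lg{\vk}^2 + (\va^2/\ve)\mL(\gm)$. Substituting into the expansion yields the lower bound
\begin{align*}
\mE_{\lm,\va}(\gm) \ge (1-\ve)\,\Lg{\vk}^2 + \Bigl(\lm^2 - \tfrac{\va^2}{\ve}\Bigr)\mL(\gm).
\end{align*}

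The key step is the choice of $\ve$. Both coefficients are strictly positive precisely when $\va^2/\lm^2 < \ve < 1$, and such an $\ve$ exists if and only if $\va^2/\lm^2<1$, that is, exactly under the hypothesis \eqref{vala}. Choosing for instance $\ve=\tfrac12(1+\va^2/\lm^2)$, both $C_1:=1-\ve$ and $C_2:=\lm^2-\va^2/\ve$ are positive constants depending only on $\va$ and $\lm$. Finally, setting $C:=\min\{C_1,C_2\}>0$ and using $a+b\ge \max\{a,b\}$ for nonnegative reals, I would conclude
\begin{align*}
\mE_{\lm,\va}(\gm) \ge C_1 \Lg{\vk}^2 + C_2 \mL(\gm) \ge C\bigl(\Lg{\vk}^2 + \mL(\gm)\bigr) \ge C\max\{\Lg{\vk}^2, \mL(\gm)\},
\end{align*}
which is \eqref{E-bd-b}. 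No genuine obstacle arises beyond identifying the sharp threshold: the whole estimate is an elementary coercivity computation, and the only thing to watch is that the admissible range for $\ve$ is nonempty precisely when $\av{\va}<\av{\lm}$, so the hypothesis is used in an essential (and sharp) way.
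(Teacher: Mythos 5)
Your proposal is correct and follows essentially the same route as the paper: expand $\mE_{\lm,\va}$, control the cross term $-2\va\int_\gm \vk\,ds$ via Cauchy--Schwarz (H\"older) and Young with a parameter $\ve$, and choose $\va^2/\lm^2<\ve<1$, which is possible exactly under \eqref{vala}. The only (harmless) addition is that you spell out the final passage from $C_1\Lg{\vk}^2+C_2\mL(\gm)$ to the stated maximum, which the paper leaves implicit.
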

\begin{proof}
Using H\"older's and Young's inequalities, for all $\ve\in (0,1]$ we have 
\begin{align} \label{p-E-lm-va}
\mE_{\lm,\va}(\gm) &=  \int_\gm \vk^2 \, ds - 2 \va \int_\gm \vk \, ds + \lm^2 \mL(\gm) \\
 & \geq \int_\gm \vk^2 \, ds - 2\av{\va} \left\{\int_\gm \vk^2 \, ds \right\}^{\frac{1}{2}} 
        \left\{ \int_\gm \, ds \right\}^{\frac{1}{2}} + \lm^2 \mL(\gm) \notag \\
 & \geq (1-\ve) \int_\gm \vk^2 \, ds + \left( \lm^2 - \dfrac{\va^2}{\ve} \right) \mL(\gm). \notag
\end{align}
Taking $\va^2 / \lm^2 < \ve < 1$, we obtain \eqref{E-bd-b}. 
\end{proof}

The purpose of this subsection is to prove a convergence of a solution of the following initial boundary 
value problem 
\begin{align} \label{g-curvature-flow}
\begin{cases}
& \pd_t \gm= -2 \pd^2_s \vk - \vk^3 + \lm^2 \vk \qquad \qquad \quad
\text{in} \quad I \times [0, \infty), \\
& \gm(0,t)= (0,0),\,\, \gm(1,t)= (R, 0),\\ 
& \vk(0,t)= \vk(1,t)= \va \qquad \qquad \qquad \quad 
\text{in} \quad [0, \infty), \\
& \gm(x,0)= \gm_0(x) \qquad \qquad \qquad \qquad \quad \,
\text{in} \quad I, 
\end{cases}
\end{align}
to a solution of 
\begin{align} \label{g-curvature}
\begin{cases}
& 2 \pd^2_s \vk + \vk^3 - \lm^2 \vk =0 \quad \text{in} \quad I, \\
& \gm(0)= (0,0),\,\, \gm(1)= (R, 0),\,\, \vk(0)= \vk(1)= \va, 
\end{cases}
\end{align}
as $t \to \infty$. 

In Section \ref{appendix-b}, we shall prove that there exists a unique smooth solution for all times,
satisfying Assumption \ref{U-B}. 

We turn to Assumption \ref{A-B}. 
Let $\gm$ be a planar open curve satisfying the stationary equation 
\begin{align} \label{g-curvature-sp}
\begin{cases}
& 2 \pd^2_s \vk + \vk^3 - \lm^2 \vk =0 \quad \text{in} \quad I, \\
& \vk(0)= \vk(1)= \va.     
\end{cases}
\end{align}
Then there exists a constant $E \in (-\lm^4/4, \infty)$ such that the pair $(\vk, E)$ satisfies 
\begin{align} \label{g-curvature-E}
\begin{cases}
& \left( \dfrac{d \vk}{ds} \right)^2 + F(\vk) = E \quad \text{in} \quad I, \\
& \vk(0)= \vk(1)= \va.     
\end{cases}
\end{align}
Let 
\begin{align}
L_0(E) &=0
\\
L_1(E) &= 2 \int^{\va}_{\vk_m} \dfrac{d\vk}{\sqrt{E-F(\vk)}}, \label{def-L1} 
\\
L_2(E) &= 2 \int^{\vk_M}_{\va} \dfrac{d\vk}{\sqrt{E-F(\vk)}}, \label{def-L2} 
\end{align}
so that 
\begin{align} \label{Le-Li}
L(E) = L_1(E) + L_2(E). 
\end{align}
It is easy to see that the length of $\gm$ can be written as
\begin{align*}
\mL(\gm)= \tilde{L}(E) + N L(E)
\end{align*} 
for some $N \in \N$, with 
\begin{align*}
\tilde{L}(E) \in \{ L_0(E), L_1(E), L_2(E) \} \quad \text{for any} \quad E \in ( F(\va), +\infty).
\end{align*}

Let $\mS$ be a set of all solutions of \eqref{g-curvature}. 
For each $A \in \R$, we define 
\begin{align} \label{S-A-Navi}
\wSigma_A= \{ \gm \in \mS \mid \mE_{\lm, \va}(\gm) \leq A \}. 
\end{align}
\begin{lem} \label{finite-p-curve}
Let $\va$, $\lm \in \R$ be such that $\lm \ne 0$ and \eqref{vala}. 
Then the set $\wSigma_A$ is finite for each $A \in \R$. 
\end{lem}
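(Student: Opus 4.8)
The plan is to mimic the structure of the proof of Lemma \ref{n-o-finite}, using the analyticity of the energy and length as functions of the parameter $E$ to rule out an infinite family of stationary solutions at a bounded energy level. I would argue by contradiction: suppose $\wSigma_A$ were infinite, so there is a sequence $\{\tilde\gm_n\}\subset\wSigma_A$, and to each $\tilde\gm_n$ associate the energy parameter $E_n\in(F(\va),\infty)$ for which $(\tilde\vk_n,E_n)$ solves \eqref{g-curvature-E}. The first task is to confine the $E_n$ to a compact subinterval of $(F(\va),\infty)$. For the upper bound, I would invoke Lemma \ref{l-p-E-div}: if $E_n\to\infty$, then the squared-curvature integral over one period diverges, forcing $\mE_\lm(\tilde\gm_n)\to\infty$; combined with the lower bound $\mE_{\lm,\va}\ge C\Lg{\vk}^2$ from Lemma \ref{l-E-bd-b}, this contradicts $\mE_{\lm,\va}(\tilde\gm_n)\le A$. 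For the lower bound $E_n\not\to F(\va)$, the key is that as $E\searrow F(\va)$ the curve $\gm$ degenerates: either $L(E)$ blows up (via the analog of Lemma \ref{anal-L}, the period $L(E)$ being analytic and divergent at the lower endpoint), or the length constraint forces $\mL(\tilde\gm_n)\to\infty$, again contradicting the energy bound through $\mE_{\lm,\va}\ge C\,\mL(\gm)$.

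Having trapped $\{E_n\}$ in a compact set, I would extract a convergent subsequence $E_{n_k}\to E_\infty\in(F(\va),\infty)$ and a limit $\tilde\gm_\infty\in\wSigma_A$ whose pair $(\vk_\infty,E_\infty)$ solves \eqref{g-curvature-E}. The heart of the argument is then to show that the map $E\mapsto$ (closing data of the stationary curve) is analytic and takes discrete values, so that $E_{n_k}=E_\infty$ for large $k$, contradicting the assumption that the $\tilde\gm_n$ are distinct. Concretely, the length decomposes as $\mL(\gm)=\tilde L(E)+NL(E)$ with $\tilde L(E)\in\{L_0(E),L_1(E),L_2(E)\}$ and $N\in\N$; I would define an endpoint displacement function $d(E)=\av{\gm(\mL(\gm))-\gm(0)}$ (or the appropriate component thereof) built from $L_1,L_2,L$, which is analytic on $(F(\va),\infty)$ by the analyticity of $L(E)$ established in the appendix together with the analytic dependence of $\gm(s,E)$ on $s$ and $E$. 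The boundary condition \eqref{fixed-p} forces $d(E_n)=R/N_n$ for integers $N_n$; since $d(E_{n_k})\to d(E_\infty)=R/N_\infty$ and the $N_{n_k}$ are integers, we get $N_{n_k}=N_\infty$ for large $k$, hence $d(E_{n_k})=d(E_\infty)$ on a sequence accumulating at $E_\infty$. Analyticity of $d$ then yields $d\equiv d(E_\infty)$ on a neighborhood, and propagating this identity (as in Lemma \ref{cp-finite}) forces $E_{n_k}=E_\infty$, so $\tilde\gm_{n_k}=\tilde\gm_\infty$, the desired contradiction.

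The main obstacle I anticipate is the bookkeeping caused by the symmetric Navier condition $\vk(0)=\vk(1)=\va$ with $\va\ne 0$: unlike the zero-curvature case, the curve need not start and end at the turning points $\vk_m,\vk_M$, so a stationary solution is built from half-periods of three possible types encoded by $\tilde L\in\{L_0,L_1,L_2\}$, and the integer $N$ counts full periods. One must verify that for each admissible combinatorial type the endpoint map is genuinely analytic in $E$ and that the finitely many types, together with the discreteness of $N$ forced by the fixed endpoints \eqref{fixed-p}, reduce the problem to finitely many analytic identities — each of which is resolved by the rigidity argument above. A secondary technical point is confirming that the lower-endpoint behavior $E\searrow F(\va)$ truly produces degeneration (blow-up of $L$ or of $\mL$); this should follow from the explicit elliptic-integral structure of $L(E)$ and its analytic continuation properties proved in Appendix \ref{appendix}, which I would cite rather than re-derive.
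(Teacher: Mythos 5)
Your setup is the same as the paper's: argue by contradiction, use Lemma \ref{l-p-E-div} together with the coercivity of Lemma \ref{l-E-bd-b} to bound $E_n$ from above, extract $E_n\to E_\infty$ and a smooth limit $\gm_\infty\in\wSigma_A$, write $\mL(\gm_n)=L_i(E_n)+N_\infty L(E_n)$ with $N_n=N_\infty$ eventually, and exploit the analyticity (Lemmas \ref{anal-L}--\ref{anal-L3}) of the endpoint displacement in $E$. Two of your auxiliary points are slightly off but harmless: the quantization $d(E_n)=R/N_n$ is an artifact of the zero-curvature case, whereas here the closing condition is the vector identity $|\tilde{\bm d}(E_n)+N_\infty{\bm d}(E_n)|=R$ (the constancy of $N_n$ comes from smooth convergence, not from discreteness of $R/N$); and the relevant degeneration of the period is at $E\to 0$, where $L(E)\to\infty$, not at $E\searrow F(\va)$.

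The genuine gap is in your endgame. From $|\tilde{\bm d}(E_{n_k})+N_\infty{\bm d}(E_{n_k})|=R$ on a sequence accumulating at $E_\infty$, the identity theorem gives that this analytic function is \emph{identically} $R$ on the connected component of its domain of analyticity --- and then nothing forces $E_{n_k}=E_\infty$; your stated conclusion is a non sequitur precisely because the function is constant on that component. The contradiction must instead come from the boundary asymptotics of the propagated identity, and this is where essentially all the work of the paper's proof lies: if $F(\va)\ge 0$ (or $E_\infty\ge 0$) the identity holds on $(F(\va),\infty)$ and is contradicted by $L(E), L_1(E), L_2(E)\to 0$ as $E\to\infty$, hence $|\tilde{\bm d}+N_\infty{\bm d}|\to 0$; but if $F(\va)<0$ and $E_\infty<0$ the identity can only be propagated on $(-\lm^4/4,0)$, since $L_1$ is analytic only on $(F(\va),0)\cup(0,\infty)$ and blows up at $E=0$, so one cannot reach $E\to\infty$. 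In that case the contradiction is obtained as $E\uparrow 0$: for $N_\infty\ge 1$ the displacement diverges because $L(E),L_1(E)\to\infty$, while the sub-case $N_\infty=0$ requires the separate observation that $\tilde L=L_2$, that $L_2$ alone extends analytically across $E=0$ to all of $(F(\va),\infty)$, and that $L_2(E)\to 0$ as $E\to\infty$. This three-way case analysis (plus the $N_\infty=0$ branch) is not a bookkeeping afterthought but the substance of the proof, and your proposal as written does not reach it.
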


\begin{proof}
Assume by contradiction that there exists a sequence $\{ \gm_n \}^\infty_{n=1} \subset \wSigma_A$ 
with $\gm_l \neq \gm_m$ if $l \neq m$. 
Then there exists a constant $E_n$ for each $n \in \N$ such that the pair $(\vk_n, E_n)$ satisfies 
\begin{align} \label{vk_n}
\begin{cases}
\left( \dfrac{d \vk_n}{ds} \right)^2 + F(\vk_n) = E_n, \\
\vk_n(0)= \vk_n(\mL(\gm_n))= \va,  
\end{cases}
\end{align}
where $\vk_n$ denotes the curvature of $\gm_n$. 
By the discussion above, for each $E_n \in (-\lm^4/4, \infty)$ there exists a unique solution $\vk_n$ of \eqref{vk_n} 
such that $\mL(\gm_n)= L_i(E_n) + N_n L(E_n)$, where $i \in \{0, 1, 2 \}$. 

We claim that there exists a positive number $E^*$ such that $E_n \leq E^*$ for any $n \in \N$. 
Suppose that $E_n \to \infty$ as $n \to \infty$. 
Then Lemma \ref{l-p-E-div} yields
\begin{align*} 
\int_{\gm_n} \vk_n^2 \, ds \to \infty \quad \text{as} \quad n \to \infty.
\end{align*}
By virtue of Lemma \ref{l-E-bd-b}, this implies that $\mE_{\lm, \va}(\gm_n) \to \infty$ 
as $n \to \infty$,
which contradicts $\gm_n  \subset \wSigma_A$. Thus we see that $\{ E_n \}$ is a bounded sequence, 
and then, there exists 
a constant $E_\infty \leq E^*$ such that $E_{n} \to E_\infty$ up to extracting a suitable subsequence. 
Moreover, possibly passing to a further subsequence, there exists 
a  curve $\gm_\infty \in \wSigma_A$ such that the curves $\gm_n$ smoothly converge to $\gm_\infty$ as $n \to \infty$.
As $\mL(\gm_\infty)= L_i(E_\infty) + N_\infty L(E_\infty)$ for some $i \in \{ 0,1,2 \}$,  
it follows that $\mL(\gm_n)= L_i(E_n) + N_\infty L(E_n)$ for sufficiently large $n$. 
We define 
\begin{align*}
{\bm d}(E) &= \gm(L(E)) - \gm(0), \\
\tilde{\bm d}(E) &= \gm(L_i(E)) - \gm(0), 
\end{align*}
where $\gm$ is a solution of $(\vk_s)^2 + F(\vk) =E$. 
Since $\gm_n \in \wSigma_A$ and $N_n= N_\infty$ for $n$ big enough,
we have 
\begin{align} \label{equi-E-infinity}
| \tilde{\bm d}(E_n) + N_n {\bm d}(E_n) | =| \tilde{\bm d}(E_n) + N_\infty {\bm d}(E_n) | = R 
\end{align}
for $n$ sufficiently large.

In the following we show that \eqref{equi-E-infinity} leads to a contradiction. 
We may assume that $\va > 0$ without loss of generality. 
First we consider the case where $F(\va) \geq 0$. 
Since $F(\va) \geq 0$ implies $E_n >0$ for any $n \in \N$, Lemmas \ref{anal-L2}-\ref{anal-L3} imply that 
the function $|\tilde{\bm d}(E) + N_\infty {\bm d}(E)|$ is analytic on $(F(\va), \infty)$. 
Then \eqref{equi-E-infinity} yields 
\begin{align} \label{const-E>0}
| \tilde{\bm d}(E) + N_\infty {\bm d}(E) | =R \,\,\,\,\, \text{for any} \,\,\,\,\, E \in (F(\va), \infty). 
\end{align}
It follows from  Lemma \ref{anal-L3} that $L(E) \to 0$ as $E \to \infty$. 
Then \eqref{Le-Li} yields that $L_i(E) \to 0$ as $E \to \infty$ for any $i \in \{1,2 \}$. 
Thus we observe that 
\begin{align*}
| \tilde{\bm d}(E) + N_\infty {\bm d}(E) | 
 \leq | \tilde{\bm d}(E) | + N_\infty \av{{\bm d}(E)} \to 0 \quad \text{as} \quad E \to \infty. 
\end{align*}
This contradicts \eqref{const-E>0}. 

Next we consider the case where $F(\va)<0$ and $E_\infty \geq 0$. 
Since we may assume that $E_n \geq 0$ for sufficiently large $n \in \N$, 
we can obtain a contradiction along the same argument of the case where $F(\va) \geq 0$. 

Finally we consider the case where $F(\va)<0$ and $E_\infty < 0$. 
Since it holds that $E_n <0$ for sufficiently large $n$, 
Lemmas \ref{anal-L}--\ref{anal-L3} and \eqref{equi-E-infinity} yield that 
\begin{align} \label{const-E<0}
| \tilde{\bm d}(E) + N_\infty {\bm d}(E) | =R \quad \text{for any} \quad E \in (-\lm^4/4, 0). 
\end{align}
Suppose that $N_\infty \neq 0$. 
Since $N_\infty \geq 1$, we have 
\begin{align*}
| \tilde{\bm d}(E) + N_\infty {\bm d}(E) | 
 \geq N_\infty \av{{\bm d}(E)} - | \tilde{\bm d}(E) | 
 \geq N_\infty \left( \av{{\bm d}(E)} - | \tilde{\bm d}(E) | \right).  
\end{align*}
Remark that Lemmas \ref{anal-L}--\ref{anal-L3} and \eqref{Le-Li} imply that 
$L(E) \to \infty$, $L_1(E) \to \infty$, and $L(E) - L_2(E) \to \infty$ as $E \uparrow 0$. 
If $\tilde{L}(E) \in \{ L_0(E), L_1(E) \}$, then it holds that 
\begin{align*}
\av{{\bm d}(E)} - | \tilde{\bm d}(E) | \to \infty \quad \text{as} \quad E \uparrow 0, 
\end{align*}
and then 
 \begin{align*}
| \tilde{\bm d}(E) + N_\infty {\bm d}(E) | \to  \infty \quad \text{as} \quad E \uparrow 0. 
\end{align*}
This contradicts \eqref{const-E>0}. 
If $\tilde{L}(E)= L_2(E)$, since \eqref{Le-Li} gives us that 
\begin{align*}
\dfrac{L(E)}{L_1(E)} = 1 - \dfrac{L_2}{L_1} \to 1 \quad \text{as} \quad E \uparrow 0, 
\end{align*}
we observe that 
\begin{align} \label{d-td=0}
| d(E) - \tilde{d}(E) | \to 0 \quad \text{as} \quad E \uparrow 0. 
\end{align}
Then it follows from \eqref{d-td=0} that 
\begin{align*}
| \tilde{d}(E) + N_\infty d(E) | 
 \geq (N_\infty +1) |d(E)| - |\tilde{d}(E)-d(E)| \to \infty \quad \text{as} \quad E \uparrow 0. 
\end{align*}
This also contradicts \eqref{const-E>0}.
Thus it must hold that $N_\infty=0$. 
Then \eqref{const-E<0} is reduced to  
\begin{align} \label{td-const-E<0}
| \tilde{\bm d}(E) | =R \quad \text{for any} \quad E \in (-\lm^4/4, 0). 
\end{align} 
Lemma \ref{anal-L2} implies that $\tilde{L}(E) = L_2(E)$. 
With the aid of Lemma \ref{anal-L3}, we can replace \eqref{td-const-E<0} with 
\begin{align} \label{td-const-E<0-2}
| \tilde{\bm d}(E) | =R \quad \text{for any} \quad E \in (F(\va), \infty). 
\end{align} 
Moreover, by virtue of Lemma \ref{anal-L3}, we see that 
$L_2(E) \to 0$ as $E \to \infty$, i.e., $|\tilde{\bm d}(E)| \to 0$ as $E \to \infty$. 
This contradicts \eqref{td-const-E<0-2}. 
The proof of Lemma \ref{finite-p-curve} is complete. 
\end{proof}

Applying Theorem \ref{main-thm} to the problem \eqref{g-curvature-flow}, we obtain the following: 
\begin{thm} \label{appl-g}
Let $\va$, $\lm \in \R$ satisfy $\lm \neq 0$ and \eqref{vala}. 
Let $\gm(x,t) : I \times [0,\infty) \to \R^2$ be a solution of \eqref{g-curvature-flow}.
Then there exists a solution $\tilde{\gm}$ of \eqref{g-curvature}
\begin{align*}
\gm(\cdot,t) \to \tilde{\gm}(\cdot) \quad \text{as} \quad t \to \infty
\end{align*}
in the $C^\infty$-topology.  
\end{thm}


\section{Appendix A} \label{appendix}
\begin{lem} \label{anal-L}
Let $\gm(s) : [0, \infty) \to \R^2$ be a planar open curve with the curvature satisfying 
\begin{align*} 
\left( \dfrac{d\vk}{ds} \right)^2 + F(\vk)= E. 
\end{align*} 
Then the function 
\begin{align*} 
L(E)= 2 \int^{\vk_M(E)}_{\vk_m(E)} \dfrac{d \vk}{\sqrt{E - F(\vk)}}. 
\end{align*}
is analytic on $(-\lm^4/4, 0) \cup (0, \infty)$. Furthermore it holds that 
\begin{align} \label{LE-div}
L(E) \to \infty \quad \text{as} \quad E \to 0. 
\end{align}
\end{lem}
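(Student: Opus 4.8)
The plan is to treat the two intervals $(-\lm^4/4,0)$ and $(0,\infty)$ separately, since the number and location of the real roots of $F(\vk)=E$ change across $E=0$. In both cases the strategy is identical: factor the quartic $E-F(\vk)$ using its roots, and then change variables so that the endpoint singularities of the integrand cancel and the limits of integration become independent of $E$. This reduces $L(E)$ to an integral over a fixed interval whose integrand is smooth, strictly positive, and depends analytically on $E$, whence analyticity of $L$ follows from the standard theorem on analyticity of parameter integrals (via Morera's theorem together with Fubini, after checking a uniform bound on compact $E$-subintervals).

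Consider first $E>0$, where the real roots are $\pm\vk_M$ and $\vk_m=-\vk_M$. Writing $E-F(\vk)=\tfrac14(\vk_M^2-\vk^2)(\vk^2+b^2)$ with $b^2:=\sqrt{\lm^4+4E}-\lm^2>0$, the substitution $\vk=\vk_M\sin\phi$ cancels the factor $\vk_M^2-\vk^2$ against the Jacobian and gives
\begin{align*}
L(E)=4\int_{-\pi/2}^{\pi/2}\dfrac{d\phi}{\sqrt{\vk_M^2\sin^2\phi+b^2}}.
\end{align*}
Since $b>0$ on $(0,\infty)$, the integrand is smooth in $\phi$, analytic in $E$, and has denominator bounded below by $b$; hence $L$ is analytic on $(0,\infty)$. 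For $E\in(-\lm^4/4,0)$ there are instead four real roots $\pm\vk_m,\pm\vk_M$ with $0<\vk_m<\vk_M$, and $E-F(\vk)=\tfrac14(\vk_M^2-\vk^2)(\vk^2-\vk_m^2)$. Passing to $u=\vk^2$ and then setting $u=\vk_m^2\cos^2\phi+\vk_M^2\sin^2\phi$ clears both endpoint singularities simultaneously and yields
\begin{align*}
L(E)=4\int_{0}^{\pi/2}\dfrac{d\phi}{\sqrt{\vk_m^2\cos^2\phi+\vk_M^2\sin^2\phi}}.
\end{align*}
Because $\vk_m(E)>0$ for $E<0$, the denominator is bounded below by $\vk_m$, and $\vk_m^2,\vk_M^2$ are analytic in $E$ on this interval; analyticity of $L$ on $(-\lm^4/4,0)$ follows as before.

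The divergence \eqref{LE-div} is then read off from these regularized formulas by observing that, as $E\to 0$, a turning point collapses onto the local maximum $\vk=0$ of $F$: indeed $b(E)\to 0$ as $E\to 0^+$ and $\vk_m(E)\to 0$ as $E\to 0^-$, while $\vk_M(E)\to\sqrt2\,\av{\lm}$ in both cases. Bounding the integrand near $\phi=0$ from below, using $\sin\phi\le\phi$, by $(\vk_M^2\phi^2+b^2)^{-1/2}$ for $E>0$ and by $(\vk_m^2+\vk_M^2\phi^2)^{-1/2}$ for $E<0$, produces a contribution of order $\vk_M^{-1}\log(1/b)$, respectively $\vk_M^{-1}\log(1/\vk_m)$, which tends to $+\infty$; hence $L(E)\to\infty$ as $E\to0$.

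The one delicate point throughout is the desingularization step: one must verify that the chosen substitutions genuinely cancel the $1/\sqrt{\,\cdot\,}$ singularities \emph{uniformly} in $E$, and that the resulting integrand and its $E$-derivatives admit a dominating bound that is uniform on compact subintervals of $(-\lm^4/4,0)$ and of $(0,\infty)$, so that differentiation under the integral sign—and therefore the claimed analyticity—is justified. Once this uniform control is established, both assertions of the lemma reduce to the elementary estimates indicated above.
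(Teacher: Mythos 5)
Your proof is correct, but it takes a genuinely different route from the paper's. You reduce $L(E)$ to a complete elliptic integral: factoring $E-F(\vk)=\tfrac14(\vk_M^2-\vk^2)(\vk^2+b^2)$ for $E>0$ (resp.\ $\tfrac14(\vk_M^2-\vk^2)(\vk^2-\vk_m^2)$ for $E<0$) and substituting $\vk=\vk_M\sin\phi$ (resp.\ $u=\vk_m^2\cos^2\phi+\vk_M^2\sin^2\phi$) removes the turning-point singularities at once and exhibits $L(E)$ as an integral over a fixed interval of an integrand that is bounded away from zero and analytic in $E$; both the analyticity and the logarithmic divergence as $E\to 0$ (where $b\to0$, resp.\ $\vk_m\to0$) then drop out immediately. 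I checked the algebra: $\vk_M^2-b^2=2\lm^2$, $\vk_M^2b^2=4E$, and $\vk_M^2+\vk_m^2=2\lm^2$, $\vk_M^2\vk_m^2=-4E$, so the factorizations and the resulting formulas $L(E)=4\int_{-\pi/2}^{\pi/2}(\vk_M^2\sin^2\phi+b^2)^{-1/2}d\phi$ and $L(E)=4\int_0^{\pi/2}(\vk_m^2\cos^2\phi+\vk_M^2\sin^2\phi)^{-1/2}d\phi$ are exact, and the "delicate point" you flag (differentiation under the integral / Morera--Fubini) is routine here because the quantity under the square root extends to a zero-free analytic function of $E$ on a complex neighborhood of any compact subinterval, uniformly in $\phi$. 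The paper instead works directly with the singular integrand: it splits $L(E)$ into pieces near and away from the turning points, expands $1/\sqrt{E-F(\vk)}$ in fractional power series $\sum_k b_k(E)(\vk_M(E)-\vk)^{k-1/2}$ near the endpoints (using $F'(\vk_M)\neq0$) and $(1-F(\vk)/E)^{-1/2}$ in the interior, and verifies convergence and term-by-term analyticity; the divergence as $E\uparrow0$ is obtained from an explicit lower bound on the piece near $\vk_m$ using $F'(\vk_m(E))\to0$. Your approach is shorter, more classical, and arguably more transparent; the paper's expansion technique has the advantage that it transfers with little change to the truncated integrals $L_1(E)$, $L_2(E)$ of Lemmas \ref{anal-L2}--\ref{anal-L3}, where the lower limit of integration is the fixed value $\va$ rather than a turning point, so the clean elliptic normal form is no longer available without an extra (incomplete elliptic integral) argument.
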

\begin{proof}
To begin with, we show that $L(E)$ is analytic on $(0,\infty)$ and $L(E) \to \infty$ as $E \downarrow 0$. 
Recall that $L(E)$ is written as 
\begin{align*}
L(E)= 4 \int^{\vk_M(E)}_0 \dfrac{d \vk}{\sqrt{E-F(\vk)}} 
\end{align*}
for $E \in (0,\infty)$. 
Since $F$ is analytic, it is clear that $\vk_M(E)$ is analytic. 
Moreover the definition of $\vk_M(E)$ implies that $F'(\vk(E)) \neq 0$. 
The Taylor expansion of $F$ at $\vk= \vk_M(E)$ is expressed as 
\begin{align*}
F(\vk)&= F(\vk_M) + F'(\vk_M) (\vk- \vk_M) + \dfrac{F''(\vk_M)}{2!} (\vk- \vk_M)^2 \\
          & \qquad + \dfrac{F^{(3)}(\vk_M)}{3!} (\vk- \vk_M)^3 + \dfrac{F^{(4)}(\vk_M)}{4!} (\vk- \vk_M)^4. 
\end{align*}
It follows from $F'(\vk_M) \neq 0$ that 
\begin{align*}
\sqrt{E-F(\vk)} = \sqrt{F'(\vk_M) (\vk_M - \vk)} 
                           \sqrt{1+ \sum^3_{n=1} a_n(E) (\vk_M - \vk)^n}  
\end{align*}
for any $\vk \in [0, \vk_M]$, where $a_n(E)$ is given by 
\begin{align*}
a_n(E)= \dfrac{(-1)^n F^{(n+1)}(\vk_M(E))}{(n+1)! F'(\vk_M(E))}
\end{align*} 
Since it holds that 
\begin{align*}
\av{a_n(E)} \leq C \av{\lm}^{-n}
\end{align*}
for any $E>0$, we see that  
\begin{align*}
\dfrac{1}{\sqrt{E-F(\vk)}} = \sum_{k=0}^\infty b_k (\vk_M(E) - \vk)^{k-1/2}
\end{align*}
for any $\vk \in (\vk_M(E)-\ve, \vk_M(E))$, where $\ve$ is a positive constant satisfying 
\begin{align*} 
\max_{1 \leq n \leq 3}{\av{a_n}} \ve (1 + \ve + \ve^2) < 1. 
\end{align*}
Remark that $b_k=b_k(E)$ is analytic on $(0,\infty)$. 
In the following let us set $L(E)/4= \mkL_1(E)+ \mkL_2(E)$, which are written as   
\begin{align*}
\mkL_1(E) = \int_0^{\vk_0(E)} \dfrac{d \vk}{\sqrt{E-F(\vk)}}, \quad 
\mkL_2(E) = \int_{\vk_0(E)}^{\vk_M(E)} \dfrac{d \vk}{\sqrt{E-F(\vk)}}, 
\end{align*}
where $\vk_0(E)= \vk_M(E) - \ve/2$.
First we check that $\mkL_1(E)$ is analytic. Let us write $\mkL_1(E)$ as 
\begin{align*}
\mkL_1(E)= \dfrac{1}{\sqrt{E}} \int^{\vk_0}_0 \left( 1 - \dfrac{F(\vk)}{E} \right)^{-\frac{1}{2}} d \vk.  
\end{align*}
Notice that the function $(1- y)^{-1/2}$ is analytic on $(-\infty, 1)$, and for any $y_0 < 1$ one can write 
\begin{align*}
(1-y)^{-\frac{1}{2}} = \sum^{\infty}_{k=0} c_k (y-y_0)  \quad \text{for all} \quad y \in (y_0, 1), 
\end{align*}
where the coefficients $c_k$ depend on $y_0$. 
Letting $\bar{\vk}= \av{\lm}$ which is a minimum point of $F$ and setting $y= F(\vk)/E$, $y_0= F(\bar{\vk})/E$, we have 
\begin{align} \label{expan-L1}
\mkL_1(E) &= \dfrac{1}{\sqrt{E}} \sum^\infty_{k=0} c_k 
                                          \int^{\vk_0}_0 \left( \dfrac{F(\vk)}{E} - \dfrac{F(\bar{\vk})}{E} \right)^k d \vk \\
                 &= \sum^\infty_{k=0} c_k \dfrac{g_k(\vk_0(E))}{E^{k+\frac{1}{2}}}, \notag
\end{align}
where 
\begin{align*}
g_k(x)= \int^x_0 (F(\vk) - F(\bar{\vk}) )^k d \vk. 
\end{align*}
Since it holds that 
\begin{align*}
\left( \int^{\vk_0}_0 \left( \dfrac{F(\vk)}{E} - \dfrac{F(\bar{\vk})}{E} \right)^k d \vk \right)^{\frac{1}{k}} 
 \to \su{\vk \in (0, \vk_0)}{\dfrac{F(\vk)- F(\bar{\vk})}{E}}
 < 1-y_0 
\end{align*}
as $k \to \infty$, we see that the series in \eqref{expan-L1} converges for each $E>0$. 
Recalling $\vk_M(E)$ is analytic, all the functions $g_k(\vk_0(E))$ is also analytic. 
This implies that $\mkL_1(E)$ is analytic for $E>0$. 

Regarding $\mkL_2(E)$, we have 
\begin{align*}
\mkL_2(E) = \int_{\vk_0}^{\vk_M(E)} \dfrac{d\vk}{\sqrt{E-F(\vk)}}
       = -\sum_{k=0}^\infty \dfrac{b_{k}(E)}{(k+1/2)} \left( \dfrac{\ve}{2} \right)^{k+1/2}. 
\end{align*}
Since $b_k(E)$ is analytic, this implies that $\mkL_2(E)$ is also analytic for $E>0$. 
Therefore we observe that $L(E)$ is analytic on $(0, \infty)$. 
On the other hand, it follows from \eqref{expan-L1} that 
\begin{align} \label{Le-div-above}
L(E) \to \infty \quad \text{as} \quad E \downarrow 0. 
\end{align}

Next we prove that the function $L(E)$ is analytic on $(-\lm^4/4, 0)$. 
Along the same line as above, we see that 
\begin{align*}
\dfrac{1}{\sqrt{E-F(\vk)}} = \sum_{k=0}^\infty b_k (\vk_M(E) - \vk)^{k-1/2}
\end{align*}
for any $\vk \in (\vk_M-\ve, \vk_M)$, and 
\begin{align*}
\dfrac{1}{\sqrt{E-F(\vk)}} = \sum_{k=0}^\infty \tilde{b}_k (\vk_m(E) - \vk)^{k-1/2}
\end{align*}
for any $\vk \in (\vk_m, \vk_m + \ve)$, where $\ve$ is an appropriate small number. 
Setting $L(E)= \tilde{\mkL}_1(E) + \tilde{\mkL}_2(E) + \tilde{\mkL}_3(E) + \tilde{\mkL}_4(E)$, where 
\begin{align*}
\tilde{\mkL}_1(E) &= \int^{\vk_m + \ve/2}_{\vk_m} \dfrac{d \vk}{\sqrt{E-F(\vk)}}, \qquad
\tilde{\mkL}_2(E)  = \int^{\bar{\vk}}_{\vk_m + \ve/2} \dfrac{d \vk}{\sqrt{E-F(\vk)}}, \\
\tilde{\mkL}_3(E) &= \int^{\vk_M - \ve/2}_{\bar{\vk}} \dfrac{d \vk}{\sqrt{E-F(\vk)}}, \qquad
\tilde{\mkL}_4(E)  = \int^{\vk_M}_{\vk_M - \ve/2} \dfrac{d \vk}{\sqrt{E-F(\vk)}}. 
\end{align*}
Regarding $\tilde{\mkL}_1(E)$ and $\tilde{\mkL}_4(E)$, we can verify that $\tilde{\mkL}_1(E)$ and $\tilde{\mkL}_4(E)$ are 
analytic on $(-\lm^4/4, 0)$ along the same argument for $\mkL_2(E)$. 
Next we turn to $\tilde{\mkL}_2(E)$. 
Along the same line as the argument for $\mkL_1(E)$, we have 
\begin{align} \label{tL2}
\tilde{\mkL}_2(E) 
 &= \int^{\bar{\vk}}_{\vk_m + \ve/2} \dfrac{1}{\sqrt{-F(\vk)}} \dfrac{d \vk}{\sqrt{1- E/F(\vk)}} \\
 &= \sum^\infty_{k=0} \tilde{c}_k \int^{\bar{\vk}}_{\vk_m + \ve/2} \dfrac{1}{\sqrt{-F(\vk)}} 
                                                   \left( \dfrac{E}{F(\vk)} - \dfrac{E}{F(\bar{\vk})} \right)^k \,d \vk \notag \\
  &= \sum^\infty_{k=0} \tilde{c}_k \tilde{g}_k(\vk_m(E) + \ve/2) E^k, \notag
\end{align}
where 
\begin{align*}
\tilde{g}_k(x)= \int^{\bar{\vk}}_x \left( F(\vk)^{-1} - F(\bar{\vk})^{-1} \right) (-F(\vk))^{-1/2} \, d\vk. 
\end{align*}
Since it holds that
\begin{align*}
\tilde{\mkL}_2(E) 
&\leq \dfrac{1}{\sqrt{-E}} \sum^\infty_{k=0} \tilde{c}_k \int^{\bar{\vk}}_{\vk_m + \ve/2} 
                                                   \left( \dfrac{E}{F(\vk)} - \dfrac{E}{F(\bar{\vk})} \right)^k \,d \vk
\end{align*}
and 
\begin{align*}
& \left( \int^{\bar{\vk}}_{\vk_m + \ve/2}  
        \left( \dfrac{E}{F(\vk)} - \dfrac{E}{F(\bar{\vk})} \right)^k \,d \vk \right)^{\frac{1}{k}} \\
& \qquad \qquad \qquad 
\to \su{\vk \in (\vk_m + \ve/2, \bar{\vk})}{\dfrac{E}{F(\vk)} - \dfrac{E}{F(\bar{\vk})} }
  < 1-y_0 
\end{align*}
as $k \to \infty$, we observe that the series in \eqref{tL2} converges for each $E \in (-\lm^4/4,0)$. 
Recalling $\vk_m(E)$ is analytic, all the functions $\tilde{g}_k(\vk_m(E) + \ve/2)$ is also analytic. 
This implies that $\tilde{\mkL}_2(E)$ is analytic for $E  \in (-\lm^4/4,0)$. 
Since similar argument gives us that $\tilde{\mkL}_3(E)$ is also analytic for $E  \in (-\lm^4/4,0)$. 

Finally we prove that $L(E) \to \infty$ as $E \uparrow 0$. 
Regarding $\tilde{\mkL}_1(E)$, it holds that 
\begin{align} \label{mkL-1-div}
\tilde{\mkL}_1(E) & > \int^{\vk_m + \ve/2}_{\vk_m} \dfrac{d\vk}{\sqrt{-F'(\vk_m)(\vk-\vk_m) - F''(\vk_m)(\vk-\vk_m)^2}} \\
 &= \dfrac{1}{2 \sqrt{- F''(\vk_m)}} \log{\dfrac{1 + \sqrt{\frac{\ve}{\ve + 2a}}}{1- \sqrt{\frac{\ve}{\ve+2a}}}},  \notag
\end{align}
where $a= F'(\vk_m)/ F''(\vk_m)$. Since $F'(\vk_m(E)) \to 0$ and $F''(\vk_m(E)) \to -\lm^2$ as $E \uparrow 0$, 
it follows from \eqref{mkL-1-div} that 
\begin{align*}
\tilde{\mkL}_1(E) \to \infty \quad \text{as} \quad E \uparrow 0. 
\end{align*}
This clearly implies that $L(E) \to \infty$ as $E \uparrow 0$. 
\end{proof}
The arguments in the proof of Lemma \ref{anal-L} also implies an analyticity of $L_i(E)$ 
which are defined by \eqref{def-L1}--\eqref{def-L2}. 
\begin{lem} \label{anal-L2}
Let $\va>0$. If $F(\va) \geq 0$, then the function $L_1(E)$ is analytic on $(F(\va), \infty)$. 
If $F(\va) < 0$, then the function $L_1(E)$ is analytic on $(F(\va), 0) \cup (0, \infty)$.  
Moreover, as $E \to 0$, it holds that  
\begin{align} \label{L2-div}
L_1(E) \to \infty \quad \text{as} \quad E \to 0.  
\end{align}
\end{lem}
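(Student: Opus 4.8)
The plan is to reuse the machinery of the proof of Lemma~\ref{anal-L} almost verbatim, exploiting one structural simplification. In
\begin{align*}
L_1(E) = 2 \int^{\va}_{\vk_m} \dfrac{d\vk}{\sqrt{E-F(\vk)}}
\end{align*}
the integrand is singular only at the lower turning point $\vk=\vk_m$: on the relevant domain one has $E>F(\va)$, so $E-F(\va)>0$ and the upper endpoint contributes no singularity. Hence, unlike for $L(E)$, a \emph{single} turning-point expansion suffices. First I would record that $F'(\vk_m)\neq 0$ on each stated interval: for $E>0$ one has $\vk_m=-\vk_M(E)$ with $\vk_M(E)^2>\lm^2$, while for $E\in(F(\va),0)$ one has $\vk_m\in(0,\av{\lm})$, so in both cases $F'(\vk_m)\neq 0$. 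The sole degeneracy is at $E=0$, where $\vk_m=0$ and $F'(0)=0$; this is precisely the excluded point and the source of the divergence.

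For the analyticity I would split $L_1(E)/2=\mkL_1(E)+\mkL_2(E)$, where $\mkL_2$ is the integral over the short interval $(\vk_m,\vk_m+\ve/2)$ adjacent to the turning point and $\mkL_1$ is the remainder, exactly as in Lemma~\ref{anal-L}. Since $F'(\vk_m)\neq 0$, the Taylor expansion of $F$ at $\vk_m$ yields
\begin{align*}
\dfrac{1}{\sqrt{E-F(\vk)}} = \sum_{k=0}^{\infty} b_k(E)\,(\vk-\vk_m(E))^{k-1/2},
\end{align*}
with coefficients $b_k(E)$ analytic because $\vk_m(E)$ is an explicit algebraic function, analytic away from $E=0$; integrating term by term shows that $\mkL_2$ is analytic. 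On the complementary interval the integrand is smooth in $(\vk,E)$, and expanding $(1-F(\vk)/E)^{-1/2}$ for $E>0$ (respectively $(1-E/F(\vk))^{-1/2}$ for $E<0$) about the minimizer of $F$, precisely as for $\mkL_1$ and $\tilde{\mkL}_2$ in Lemma~\ref{anal-L}, gives the analyticity of $\mkL_1$. This yields analyticity of $L_1$ on $(F(\va),\infty)$ when $F(\va)\ge 0$ and on $(F(\va),0)\cup(0,\infty)$ when $F(\va)<0$.

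The main obstacle is the divergence \eqref{L2-div}, whose mechanism is not uniform in the sign of $E$ (the claim being nonvacuous exactly when $F(\va)\le 0$). When $E\uparrow 0$, which can occur only if $F(\va)<0$, the turning point degenerates, $\vk_m\to 0$ and $F'(\vk_m)\to 0$, and the lower bound \eqref{mkL-1-div} applies word for word to the piece $\mkL_2$ adjacent to $\vk_m$, producing a logarithmic blow-up. When $E\downarrow 0$, by contrast, the turning point stays nondegenerate, $\vk_m\to-\sqrt{2}\,\av{\lm}$, so the blow-up must instead originate at the interior point $\vk=0$, the local maximum of $F$ with $F(0)=0$. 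To capture it I would fix $\delta\in(0,\va]$ and use $F(\vk)=-\tfrac{\lm^2}{2}\vk^2+\tfrac14\vk^4\ge-\tfrac{\lm^2}{2}\vk^2$ on $[0,\delta]$, so that
\begin{align*}
L_1(E) &\ge 2\int_0^{\delta} \dfrac{d\vk}{\sqrt{E+\tfrac{\lm^2}{2}\vk^2}} \\
 &= \dfrac{2\sqrt{2}}{\av{\lm}}\,{\rm arcsinh}\!\left(\dfrac{\av{\lm}\,\delta}{\sqrt{2E}}\right)\to\infty
\end{align*}
as $E\downarrow 0$. Combining the two one-sided estimates gives \eqref{L2-div} and completes the proof.
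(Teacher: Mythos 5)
Your proof is correct and follows the same route as the paper, whose entire proof of this lemma is the single sentence that the argument of Lemma \ref{anal-L} carries over; you have simply supplied the details that this entails (a single turning point at $\vk_m$, the nondegeneracy $F'(\vk_m)\neq 0$ away from $E=0$, and the two one-sided divergence mechanisms at $E=0$). Your explicit $\mathrm{arcsinh}$ lower bound for $E\downarrow 0$ is a cleaner, more elementary substitute for the paper's implicit appeal to the $E^{-1/2}$ prefactor in the expansion \eqref{expan-L1}, but the underlying mechanism (degeneration of the integrand near $\vk=0$, where $F$ has a local maximum with value $0$) is the same.
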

\begin{proof}
The proof of Lemma \ref{anal-L} gives us the conclusion. 
\end{proof}

\begin{lem} \label{anal-L3}
For each $\va >0$, the function $L_2(E)$ is analytic on $(F(\va), \infty)$. 
Moreover, for each $\va \in (0, 2 \sqrt{\av{\lm}})$, it holds that  
\begin{align} \label{L3-div}
L_2(E) \to 0 \quad \text{as} \quad E \to \infty.  
\end{align}
\end{lem}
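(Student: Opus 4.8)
The plan is to establish the two claims by separate arguments: analyticity by transcribing the endpoint analysis of Lemma~\ref{anal-L}, and the decay by an explicit upper bound that isolates the singular endpoint $\vk=\vk_M(E)$.

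\emph{Analyticity.} In $L_2(E)=2\int_\va^{\vk_M(E)}(E-F(\vk))^{-1/2}\,d\vk$ the integrand is singular only at the upper limit: for $E>F(\va)$ one checks, using that $F$ decreases on $(0,\av{\lm})$ and increases on $(\av{\lm},\infty)$ with $F(\vk_M)=E$, that $E-F(\vk)>0$ for all $\vk\in[\va,\vk_M)$ while $F'(\vk_M)\neq 0$. I would therefore split $L_2(E)/2=\int_\va^{\vk_M-\ve/2}+\int_{\vk_M-\ve/2}^{\vk_M}$ exactly as the pieces $\mkL_1,\mkL_2$ in the proof of Lemma~\ref{anal-L}. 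The boundary piece is handled by the convergent expansion $(E-F(\vk))^{-1/2}=\sum_{k\ge 0}b_k(E)(\vk_M-\vk)^{k-1/2}$ with $b_k(E)$ analytic, integrated termwise to a series analytic in $E$; the bulk piece is the integral of a jointly analytic integrand, bounded away from its singularity, over $[\va,\vk_M(E)-\ve/2]$ with analytic upper limit (recall $\vk_M(E)$ is analytic), hence analytic in $E$. The only point needing care is that $(F(\va),\infty)$ crosses $E=0$ when $F(\va)<0$; but since the fixed lower limit $\va>0$ stays away from the root $\vk=0$ of $F$, none of the blow-up responsible for $L(E)\to\infty$ at $E=0$ occurs in $L_2$, so analyticity persists across $E=0$.

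\emph{Decay.} Fix $E$ so large that $\vk_M(E)>2\va$ and split $[\va,\vk_M]$ at $\vk_M/2$. On the bulk $[\va,\vk_M/2]$ I would use $E-F(\vk)\ge E-F(\vk_M/2)\ge c\,\vk_M^4$ for large $E$ (since $E=F(\vk_M)\sim \vk_M^4/4$ while $F(\vk_M/2)\sim \vk_M^4/64$), so this contributes at most $(\vk_M/2)/(\sqrt{c}\,\vk_M^2)=O(\vk_M^{-1})$. On the boundary layer $[\vk_M/2,\vk_M]$ I would write $E-F(\vk)=\int_\vk^{\vk_M}F'(t)\,dt\ge F'(\vk)(\vk_M-\vk)\ge c'\vk_M^3(\vk_M-\vk)$, valid because $F'$ is increasing there and $F'(\vk_M/2)\sim\vk_M^3/8$; the remaining integral $\int_{\vk_M/2}^{\vk_M}(\vk_M-\vk)^{-1/2}\,d\vk/(\sqrt{c'}\,\vk_M^{3/2})$ is again $O(\vk_M^{-1})$. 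As $\vk_M(E)\to\infty$, both terms vanish and $L_2(E)\to 0$.

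I expect the \emph{main obstacle} to be the boundary-layer estimate: although the interval $[\va,\vk_M]$ lengthens without bound, the decay must come from the growth $F'(\vk_M)=\vk_M^3-\lm^2\vk_M\to\infty$, which renders the inverse-square-root singularity ever milder; quantifying this so that it beats the growth of $\vk_M$ is the crux, while the analyticity is a routine transcription of Lemma~\ref{anal-L}. I would finally remark that the hypothesis $\va\in(0,2\sqrt{\av{\lm}})$ is more than enough to guarantee $\vk_M(E)>2\va$ for all large $E$, so that the split above is legitimate.
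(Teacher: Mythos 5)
Your proposal is correct, and while the analyticity part is essentially the paper's own argument (transcribe the $\mkL_1/\mkL_2$ decomposition of Lemma \ref{anal-L}, with the boundary layer at $\vk_M(E)$ handled by the fractional-power series and the bulk by joint analyticity), your decay estimate takes a genuinely different route. The paper splits $[\va,\vk_M]$ at $\sqrt{2}\av{\lm}$, the zero of $F$: on $[\va,\sqrt{2}\av{\lm}]$ it uses $F\le 0$ to get the bound $(\sqrt{2}\av{\lm}-\va)/\sqrt{E}$, and on $[\sqrt{2}\av{\lm},\vk_M]$ it bounds $E-F(\vk)\ge E(1-\vk^4/4E)$, substitutes $x=\vk/(4E)^{1/4}$, and compares with $\int dx/\sqrt{1-x^2}=\arcsin$ to get $O(E^{-1/4})$. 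You instead split at $\vk_M/2$ and use that $F$ attains its maximum over $[\va,\vk_M/2]$ at $\vk_M/2$ together with the mean-value bound $E-F(\vk)\ge F'(\vk_M/2)(\vk_M-\vk)$ on the boundary layer (legitimate since $F'$ is increasing for $\vk>\av{\lm}/\sqrt{3}$), yielding the same rate $O(\vk_M^{-1})=O(E^{-1/4})$. Two remarks in your favour: first, your argument never uses the upper bound on $\va$ (the split $\vk_M>2\va$ holds automatically for large $E$), whereas the paper's split genuinely needs $\va<\sqrt{2}\av{\lm}$ so that $F\le 0$ on the first piece; second, your boundary-layer bound avoids a rough spot in the paper's estimate, since $1-\vk^4/(4E)$ actually becomes negative for $\vk$ near $\vk_M$ (one has $\vk_M^4>4E$), so the paper's comparison with $1/\sqrt{1-x^4}$ requires a further correction that your monotone-$F'$ argument does not. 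Your explicit observation that analyticity persists across $E=0$ because the degenerate endpoint $\vk_m(E)$ (where $F'\to 0$) does not appear in $L_2$ is also a point the paper leaves implicit but that is needed, since $(F(\va),\infty)$ contains $0$ when $F(\va)<0$.
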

\begin{proof}
An analyticity of $L_2(E)$ is followed from the same argument of the proof of Lemma \ref{anal-L}. 
We shall prove \eqref{L3-div}. 
Since $0< \va < \sqrt{2} \av{\lm}$, we divide $L_2(E)$ into two part as follows: 
\begin{align} \label{part-L3-1}
\int^{\vk_M}_{\va} \dfrac{d \vk}{\sqrt{E-F(\vk)}} 
 = \int^{\sqrt{2} \av{\lm}}_{\va} \dfrac{d \vk}{\sqrt{E-F(\vk)}} + \int^{\vk_M}_{\sqrt{2} \av{\lm}} \dfrac{d \vk}{\sqrt{E-F(\vk)}}. 
\end{align}
Recalling $F(\sqrt{2} \av{\lm})=0$, we have 
\begin{align*}
\int^{\sqrt{2} \av{\lm}}_{\va} \dfrac{d \vk}{\sqrt{E-F(\vk)}} 
\leq \dfrac{1}{\sqrt{E}} \int^{\sqrt{2} \av{\lm}}_{\va} d \vk 
\to 0 \quad \text{as} \quad E \to \infty. 
\end{align*}
Thus it is sufficient to estimate the second term of the right-hand side of \eqref{part-L3-1}. 
By changing the variable $\vk/(4E)^{1/4}=x$, we have 
\begin{align*}
\int^{\vk_M}_{\sqrt{2} \av{\lm}} \dfrac{d \vk}{\sqrt{E-F(\vk)}} 
 &\leq \dfrac{1}{\sqrt{E}} \int^{\vk_M}_{\sqrt{2} \av{\lm}} \dfrac{d \vk}{\sqrt{1 - \frac{\vk^4}{4E}}} \\
 &= \dfrac{\sqrt{2}}{E^{1/4}} \int^{\vk_M/(4E)^{1/4}}_{\sqrt{2} \av{\lm}/(4E)^{1/4}} \dfrac{dx}{\sqrt{1 - x^4}}. 
\end{align*}
And then, the conclusion is obtained from the following calculation: 
\begin{align*}
\dfrac{\sqrt{2}}{E^{1/4}} \int^{\vk_M/(4E)^{1/4}}_{\sqrt{2} \av{\lm}/(4E)^{1/4}} \dfrac{dx}{\sqrt{1 - x^4}}
 &\leq \dfrac{\sqrt{2}}{E^{1/4}} \int^{\vk_M/(4E)^{1/4}}_{\sqrt{2} \av{\lm}/(4E)^{1/4}} \dfrac{dx}{\sqrt{1 - x^2}} \\
 &= \dfrac{\sqrt{2}}{E^{1/4}} \left\{ \sin^{-1}{\dfrac{\vk_M}{(4E)^{1/4}}} - \sin^{-1}{\dfrac{\sqrt{2} \av{\lm}}{(4E)^{1/4}}} \right\} \\
 & \to 0 \quad \text{as} \quad E \to \infty. 
\end{align*}
\end{proof}


\section{Appendix B} \label{appendix-b}
The scope of this appendix is to prove that \eqref{g-curvature-flow} has a unique smooth solution defined for all times. 

Let us first show that the $L^2$-gradient flow for the functional $\mE_{\lm, \va}$ under \eqref{fixed-p}-\eqref{Navier} 
can be written as \eqref{s-s}.  Indeed, let $\gm: [0,1] \to \R^2$ be a smooth planar curve 
satisfying the symmetric Navier boundary condition 
\begin{align} \label{Navier-3}
\gm(0)=(0,0), \,\,\, \gm(1)=(R,0),\,\,\, 
\vk(0)=\vk(1)=\va, 
\end{align}
We consider a variation of $\gm$ defined as follows: 
\begin{align*}
\gm(x,\ve)= \gm(x) + \phi(x,\ve) \bn(x),  
\end{align*}
where $\bn$ is the unit normal vector, pointing in the direction of the curvature, given by 
\begin{align*}
\bn= \begin{pmatrix}
0 & -1 \\
1 & 0
\end{pmatrix}
\dfrac{\gm_x}{\av{\gm_x}}
:= \mR \dfrac{\gm_x}{\av{\gm_x}}, 
\end{align*}
and $\phi(x,\ve) \in C^\infty((-\ve_0, \ve_0); C^\infty(0,1))$ is an arbitral smooth function with 
\begin{align*}
\phi(x,0) \equiv \phi(0,\ve) \equiv \phi(1,\ve) \equiv 0. 
\end{align*}
In the following we shall derive a first variational formula for the functional $\mE_{\lm, \va}(\gm)$. 
Put 
\begin{align*}
\tau= \dfrac{\gm_x}{\av{\gm_x}}.  
\end{align*}
Since the curvature of $\gm$ is expressed as 
\begin{align} \label{curv-form}
\vk= \dfrac{\gm_{xx} \cdot \mR \gm_x}{\av{\gm_x}^3}, 
\end{align}
we have  
\begin{align*}
\vk= \gm_{xx} \cdot \bn \av{\gm_x}^{-2}, 
\end{align*}
and then Frenet-Serret's formula $\pd_s \bn \cdot \tau = - \vk$ yields that 
\begin{align*}
\bn_x \cdot \tau = - \vk \av{\gm_x}. 
\end{align*}
To begin with, we derive useful variational formulae. 
First we find the first variational formula of the local length.  
\begin{align} \label{f-v-length}
\dfrac{d}{d\ve}\av{\gm_x(x,\ve)}\biggm|_{\ve=0}
 &= \dfrac{\gm_x \cdot (\phi_\ve \bn)_x}{\av{\gm_x}} = \tau \cdot \phi_{\ve} \bn_x
  = - \vk \av{\gm_x} \phi_\ve,  
\end{align}
where $\phi_\ve(\cdot)= (\pd \phi/ \pd \ve)(\cdot, 0)$. 
Next we find the first variation formula of the curvature. 
From \eqref{curv-form} and 
\begin{align*}
\bn \cdot \bn_{x x} &= -\av{\bn_x}^2= - \vk^2 \av{\gm_x}^2, \\
\gm_{xx} \cdot \mR \bn_x &= \gm_{xx} \cdot \mR(-\vk \gm_x) = -\vk^2 \av{\gm_x}^3, \\
\left( \av{\gm_x}^{-1} \right)_x &= \dfrac{\gm_{xx} \cdot \mR \bn}{\av{\gm_x}^2}, 
\end{align*}
it follows that 
\begin{align} \label{f-v-curv}
\dfrac{d}{d\ve}\vk(x,\ve)\biggm|_{\ve=0} 
= \dfrac{\phi_{\ve x x}}{\av{\gm_x}^2} + \vk^2 \phi_\ve 
    + ( \av{\gm_x}^{-1} )_x \dfrac{\phi_{\ve x}}{\av{\gm_x}}. 
\end{align}
Using \eqref{f-v-length}, we obtain 
\begin{align*}
& \dfrac{d}{d\ve} \mE_{\lm,\va}(\gm(\cdot,\ve)) \biggm|_{\ve=0} \\ 
& \quad 
 = \int^1_0 \left\{ 2(\vk - \va) \dfrac{d}{d\ve}\vk \biggm|_{\ve=0} 
         - \left( \vk^3 - 2\va \vk^2 + \lm^2 \vk \right)  \phi_\ve \right\}\av{\gm_x} \, dx
\end{align*}
Using \eqref{f-v-curv} and integrating by parts, we get   
\begin{align*}
& \int^1_0 (\vk-\va) \dfrac{d}{d\ve}\vk \biggm|_{\ve=0} \av{\gm_x} \, dx \\
& = \int^1_0 (\vk-\va) \left\{ \dfrac{\phi_{\ve x x}}{\av{\gm_x}^2} + \vk^2 \phi_\ve 
    + ( \av{\gm_x}^{-1} )_x \dfrac{\phi_{\ve x}}{\av{\gm_x}} \right\} \av{\gm_x} \, dx \\
&=  \int^1_0 -\left(\dfrac{\vk-\va}{\av{\gm_x}} \right)_x \phi_{\ve x} 
    + (\vk^3-\va \vk^2) \phi_\ve \av{\gm_x} 
    + ( \av{\gm_x}^{-1} )_x (\vk-\va) \phi_{\ve x} \, dx \\
& \qquad + \left[ \dfrac{\vk-\va}{\av{\gm_x}} \phi_{\ve x} \right]^1_0 \\
& = \int^1_0 -\dfrac{\vk_x}{\av{\gm_x}} \phi_{\ve x} + (\vk^3-\va \vk^2) \phi_\ve \av{\gm_x} \, dx \\
& = \int^1_0 \left(\dfrac{\vk_x}{\av{\gm_x}}\right)_x \phi_\ve 
      + (\vk^3 - \va \vk^2) \phi_\ve \av{\gm_x} \, dx \\
& = \int^1_0 \left\{ \left(\dfrac{\pd_x}{\av{\gm_x}}\right)^2 \vk 
         + (\vk^3 - \va \vk^2) \right\} \phi_\ve \av{\gm_x} \, dx. 
\end{align*}
Here we use $\vk(0)= \vk(1)= \va$. Thus we find 
\begin{align} \label{f-vari}
\dfrac{d}{d\ve} \mE_{\lm,\va} (\gm(\cdot,\ve)) \biggm|_{\ve=0} 
 = \int^b_a \left\{ 2 \left(\dfrac{\pd_x}{\av{\gm_x}}\right)^2 \vk 
         + \vk^3 - \lm^2 \vk \right\}\phi_\ve \av{\gm_x} \, dx. 
\end{align}
Parameterizing by the arc length, the formula \eqref{f-vari} is written as 
\begin{align*}
\dfrac{d}{d\ve} \mE_{\lm,\va} (\gm(\cdot,\ve)) \biggm|_{\ve=0} 
 = \int^1_0 \left\{ 2 \vk_{ss} + \vk^3 - \lm^2 \vk \right\}\phi_\ve \, ds. 
\end{align*}
Therefore we see that the flow \eqref{s-s} is the $L^2$-gradient flow 
for the functional $\mE_{\lm,\va}$ under the symmetric Navier boundary condition \eqref{Navier-3}. 

Since \eqref{g-curvature-flow} is a nonlinear boundary value problem for a quasi-linear parabolic equation, 
a short time existence is a standard matter. In what follows we shall prove a long time existence of 
solutions to \eqref{g-curvature-flow}. Throughout the section, put 
\begin{align*}
V^\lm = 2 \pd^2_s \vk + \vk^3 - \lm^2 \vk. 
\end{align*}
Then the equation in \eqref{g-curvature-flow} is written as 
\begin{align} \label{s-s-flow}
\pd_t \gm = - V^\lm \bn. 
\end{align}
Since $s$ depends on $t$, remark that the following holds.  
\begin{lem} \label{comu}
Under \eqref{s-s-flow}, the following commutation rule holds{\rm :} 
\begin{align*}
\pd_t \pd_s = \pd_s \pd_t - \vk V^\lm \pd_s. 
\end{align*}
\end{lem}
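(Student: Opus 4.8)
The plan is to reduce everything to the time derivative of the local length element $\av{\gm_x}$. The point is that the material coordinate $x\in I$ and the time $t$ are independent, so $\pd_x$ and $\pd_t$ commute, whereas the arclength derivative carries the $t$-dependent factor $\pd_s=\av{\gm_x}^{-1}\pd_x$. Acting on an arbitrary smooth function $f(x,t)$, I would write
\[
\pd_t\pd_s f=\pd_t\!\left(\frac{1}{\av{\gm_x}}\,\pd_x f\right)=-\frac{\pd_t\av{\gm_x}}{\av{\gm_x}^2}\,\pd_x f+\frac{1}{\av{\gm_x}}\,\pd_x\pd_t f,
\]
and recognize the second term as $\pd_s\pd_t f$, since $\pd_x\pd_t=\pd_t\pd_x$. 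Thus the identity follows the moment one shows that the coefficient $\pd_t\av{\gm_x}/\av{\gm_x}$ equals $\vk V^\lm$.

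The core step is therefore the computation of $\pd_t\av{\gm_x}$. Differentiating $\av{\gm_x}=(\gm_x\cdot\gm_x)^{1/2}$ in $t$ and using $\gm_{xt}=\pd_x(\pd_t\gm)=-\pd_x(V^\lm\bn)$ from the flow \eqref{s-s-flow}, I obtain
\[
\pd_t\av{\gm_x}=\frac{\gm_x\cdot\gm_{xt}}{\av{\gm_x}}=\tau\cdot\gm_{xt}=-(\pd_x V^\lm)\,(\tau\cdot\bn)-V^\lm\,(\tau\cdot\bn_x).
\]
Here I would invoke the orthogonality $\tau\cdot\bn=0$, which annihilates the first term, together with the relation $\bn_x\cdot\tau=-\vk\av{\gm_x}$ recorded above in the derivation of the first variation (a consequence of the Frenet--Serret formula $\pd_s\bn\cdot\tau=-\vk$). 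These turn the surviving term into $\pd_t\av{\gm_x}=\vk V^\lm\av{\gm_x}$.

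Substituting this back into the first display immediately gives $\pd_t\pd_s f=\pd_s\pd_t f-\vk V^\lm\,\pd_s f$ for every $f$, which is the claimed operator identity. I do not anticipate any genuine difficulty: the whole argument is a one-line differentiation once $\pd_t\av{\gm_x}$ is known, and the only point demanding care is the sign bookkeeping in $\pd_t\av{\gm_x}$. That sign rests entirely on the orientation convention for $\bn$ and on $\bn_x\cdot\tau=-\vk\av{\gm_x}$; provided these are used consistently with the rest of the section, the commutation rule follows directly.
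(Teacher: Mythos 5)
Your proof is correct, and it is exactly the computation the paper itself relies on but does not write out (the lemma is stated as a remark without proof; the identity $\pd_t|\gm_x|=\vk V^\lm|\gm_x|$ you derive is the same fact the authors record as $\pd_t\,ds=\vk V^\lm\,ds$ in Lemma \ref{k-flow}, and your sign bookkeeping via $\tau\cdot\bn=0$ and $\bn_x\cdot\tau=-\vk|\gm_x|$ matches the conventions used in their first-variation computation \eqref{f-v-length}). Nothing to add.
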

Lemma \ref{comu} gives us the following: 
\begin{lem} \label{k-flow}
Let $\gm(x,t)$ satisfy \eqref{s-s-flow}. Then it holds that 
\begin{align} \label{k-eq}
\pd_t \vk & = - \pd^2_s V^\lm - \vk^2 V^\lm. 
\end{align}
Furthermore, the line element $ds$ of $\gm(x,t)$ satisfies 
\begin{align} \label{ds-pd}
\pd_t ds = \vk V^\lm ds. 
\end{align}
\end{lem}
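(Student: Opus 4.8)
The plan is to prove the two identities separately: I would derive the line-element evolution \eqref{ds-pd} directly from the flow equation, and then obtain the curvature evolution \eqref{k-eq} by combining the commutation rule of Lemma \ref{comu} with the evolution of the Frenet frame.

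First, for \eqref{ds-pd}, I would differentiate $ds = \av{\gm_x}\,dx$ in time, writing $\pd_t \av{\gm_x} = \av{\gm_x}^{-1}\,\gm_x\cdot \pd_x(\pd_t\gm)$ and substituting $\pd_t \gm = -V^\lm \bn$. Using $\gm_x\cdot\bn=0$ together with the Frenet relation $\pd_s\bn=-\vk\tau$, which gives $\pd_x \bn = -\vk\gm_x$, the term involving $\pd_x V^\lm$ drops out and only the tangential contribution of $\pd_x\bn$ survives, yielding $\gm_x\cdot \pd_x(-V^\lm\bn)=\vk V^\lm\av{\gm_x}^2$ and hence $\pd_t\av{\gm_x}=\vk V^\lm\av{\gm_x}$. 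This is precisely \eqref{ds-pd}; since it is the same computation underlying Lemma \ref{comu}, no circularity is introduced.

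Next, to prepare for \eqref{k-eq} I would compute the evolution of the unit tangent and normal. Applying Lemma \ref{comu} to $\tau=\pd_s\gm$ gives $\pd_t\tau=\pd_s(\pd_t\gm)-\vk V^\lm\tau$, and inserting $\pd_t\gm=-V^\lm\bn$ together with $\pd_s\bn=-\vk\tau$, the two tangential terms cancel and one is left with $\pd_t\tau=-(\pd_s V^\lm)\bn$. Differentiating the orthogonality relation $\tau\cdot\bn=0$ in time, and using $\av{\bn}=1$, then forces $\pd_t\bn=(\pd_s V^\lm)\tau$. The structural point is that, because the flow is purely normal, each of these time derivatives has a single nonzero frame component.

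Finally, I would write $\vk=\pd_s\tau\cdot\bn$ and differentiate in time by the product rule. The term $\pd_s\tau\cdot\pd_t\bn=(\vk\bn)\cdot((\pd_s V^\lm)\tau)$ vanishes by orthogonality, so only $(\pd_t\pd_s\tau)\cdot\bn$ remains; applying Lemma \ref{comu} once more to exchange $\pd_t$ and $\pd_s$ in $\pd_t\pd_s\tau$, then substituting the formula for $\pd_t\tau$ and the Frenet relations, and projecting onto $\bn$, all tangential contributions drop and the normal component produces exactly $-\pd_s^2 V^\lm-\vk^2 V^\lm$, which is \eqref{k-eq}. I expect this last step to be the only delicate one: the main obstacle is the careful bookkeeping of signs and the repeated use of the non-commutativity $\pd_t\pd_s\neq\pd_s\pd_t$. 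Once the systematic tangential cancellations are recognized, the remaining manipulations are routine and I do not anticipate a genuine difficulty beyond this.
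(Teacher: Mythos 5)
Your proof is correct: all three computations (the line element, the frame evolution $\pd_t\tau=-(\pd_s V^\lm)\bn$, $\pd_t\bn=(\pd_s V^\lm)\tau$, and the projection of $\pd_t\pd_s\tau$ onto $\bn$) check out with the paper's sign conventions ($\pd_s\tau=\vk\bn$, $\pd_s\bn=-\vk\tau$, $\pd_t\gm=-V^\lm\bn$), and your remark about the non-circularity of using the $ds$-computation alongside Lemma \ref{comu} is accurate. The paper gives no written proof, merely asserting that the lemma follows from the commutation rule, and your argument is exactly the standard derivation the authors are invoking.
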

The boundary conditions in \eqref{g-curvature-flow} imply that several terms vanish on the boundary. 
\begin{lem}\label{vanishing}
Suppose that $\gm$ satisfies \eqref{g-curvature-flow}. Then  it holds that  
\begin{align}
\pd_t \gm &=0 \quad \quad \, \text{on} \quad \pd I \times [0,\infty), \label{v-gm-t} \\
\pd_t \vk &=0 \quad \quad \, \text{on} \quad \pd I \times [0,\infty), \label{v-k-t} \\
V^\lm &=0  \quad \quad \, \text{on} \quad \pd I \times [0,\infty), \label{v-V} \\
\pd^2_s V^\lm &=0  \quad \quad \, \text{on} \quad \pd I \times [0,\infty), \label{v-V-s2} \\
\pd_t V^\lm &=0  \quad \quad \, \text{on} \quad \pd I \times [0,\infty), \label{v-V-t} \\
\pd_t \pd^2_s V^\lm &=0  \quad \quad \, \text{on} \quad \pd I \times [0,\infty), \label{v-V-ts2} \\
\pd_t \pd_s &= \pd_s \pd_t  \quad \text{on} \quad \pd I \times [0,\infty) \label{can-commu}. 
\end{align}
\end{lem}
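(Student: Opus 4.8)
The plan is to establish the seven boundary identities in Lemma~\ref{vanishing} in the order in which they become available, using the boundary conditions in \eqref{g-curvature-flow} together with the evolution equations from Lemmas~\ref{comu} and~\ref{k-flow}. The key observation is that the spatial boundary conditions $\gm(0,t)=(0,0)$, $\gm(1,t)=(R,0)$ and $\vk(0,t)=\vk(1,t)=\va$ hold \emph{for all} $t\in[0,\infty)$, so they may be differentiated in $t$ at the endpoints; this is precisely what converts static constraints into vanishing-time-derivative statements.

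First I would prove \eqref{v-gm-t}: since $\gm(x,t)$ equals the fixed point $(0,0)$ (resp. $(R,0)$) for all $t$ at $x=0$ (resp. $x=1$), differentiating in $t$ gives $\pd_t\gm=0$ on $\pd I$. Next \eqref{v-k-t} follows the same way from $\vk(0,t)=\vk(1,t)=\va$ being constant in $t$: differentiating yields $\pd_t\vk=0$ on $\pd I$. For \eqref{v-V}, I would use \eqref{s-s-flow}, which reads $\pd_t\gm=-V^\lm\bn$; combining with \eqref{v-gm-t} and the fact that $\bn$ is a unit vector gives $V^\lm\bn=0$ on $\pd I$, hence $V^\lm=0$ there. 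These three are the base cases. The commutation identity \eqref{can-commu} then follows directly from Lemma~\ref{comu}, since $\pd_t\pd_s=\pd_s\pd_t-\vk V^\lm\pd_s$ and $V^\lm=0$ on $\pd I$ forces the extra term $\vk V^\lm\pd_s$ to vanish on the boundary.

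The remaining three identities require combining these. For \eqref{v-V-s2}, I would start from \eqref{k-eq}, namely $\pd_t\vk=-\pd^2_s V^\lm-\vk^2 V^\lm$, evaluated on $\pd I$: using \eqref{v-k-t} ($\pd_t\vk=0$) and \eqref{v-V} ($V^\lm=0$) leaves $\pd^2_s V^\lm=0$ on $\pd I$. For \eqref{v-V-t}, since $V^\lm=2\pd^2_s\vk+\vk^3-\lm^2\vk$ and $V^\lm\equiv 0$ on each endpoint for all $t$, one differentiates this boundary identity in $t$; alternatively, expressing $\pd_t V^\lm$ via \eqref{can-commu} (so that $\pd_t$ and $\pd_s$ commute on $\pd I$) and inserting $\pd_t\vk=0$ and its boundary $s$-derivatives, one obtains $\pd_t V^\lm=0$. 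Finally \eqref{v-V-ts2} is obtained by differentiating \eqref{v-V-s2} in $t$, again legitimately because the identity $\pd^2_s V^\lm=0$ holds at the fixed endpoints for all $t$, and using \eqref{can-commu} to interchange $\pd_t$ with the spatial derivatives.

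The main subtlety—and the step I would treat most carefully—is the passage from a \emph{pointwise-in-time} boundary identity to its \emph{time-differentiated} version in \eqref{v-V-t} and \eqref{v-V-ts2}. The point $x=0$ (or $x=1$) is fixed in the parameter $x$, but $s$ depends on $t$, so one must be sure that $\pd_t$ is taken at fixed $x$ and that the commutation rule of Lemma~\ref{comu} (together with \eqref{can-commu}, valid only \emph{on} $\pd I$) is used to rewrite derivatives consistently. Once one is careful that every $t$-differentiation is performed along the fixed boundary parameter and that $\vk V^\lm$ terms are killed by \eqref{v-V}, each identity reduces to substituting the previously established facts; there is no genuine analytic difficulty, only bookkeeping of which derivatives commute on the boundary.
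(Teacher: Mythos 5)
Your proposal is correct and follows essentially the same chain of deductions as the paper: \eqref{v-gm-t}--\eqref{v-k-t} from time-differentiating the fixed boundary data, \eqref{v-V} from \eqref{s-s-flow}, \eqref{v-V-s2} from \eqref{k-eq}, \eqref{v-V-t} and \eqref{v-V-ts2} from differentiating the previously obtained boundary identities in $t$, and \eqref{can-commu} from Lemma~\ref{comu} together with \eqref{v-V}. Your extra care about the order of $\pd_t$ and $\pd_s$ in the last two identities is harmless but not needed, since \eqref{v-V-ts2} is literally $\pd_t$ applied to the identity $\pd^2_s V^\lm=0$ holding for all $t$ at the fixed endpoints.
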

\begin{proof}
Since both $\gm(t)$ and $\vk(t)$ are fixed on $\pd I$, we observe \eqref{v-gm-t}-\eqref{v-k-t}. 
It follows from \eqref{s-s-flow} and \eqref{v-gm-t} that \eqref{v-V} holds. 
By virtue of \eqref{k-flow}, \eqref{v-k-t}, and \eqref{v-V}, we obtain \eqref{v-V-s2}. 
Then \eqref{v-V} and \eqref{v-V-s2} implies \eqref{v-V-t} and \eqref{v-V-ts2}, respectively. 
\eqref{can-commu} is followed from Lemma \ref{comu} and \eqref{v-V}
\end{proof}

Here we introduce interpolation inequalities for open curves, which has been inspired by \cite{dziuk} for closed curves 
and given in \cite{lin}. The interpolation inequalities are written in terms of the following the scale invariant Sobolev norms: 
\begin{align*}
\Lp{\vk}{k,p}:= \sum^{k}_{i=0} \Lp{\pd^i_s \vk}{p}, \quad 
\Lp{\pd^i_s \vk}{p} := \mL(\gm)^{i+1-1/p} \left( \int_I \av{\pd^i_s \vk}^p \right)^{1/p}. 
\end{align*}
\begin{lem}{\rm (\cite{lin})} 
Let $\gm : I \to \R^2$ be a smooth curve. 
Then for any $k \in \N \cup \{ 0 \}$, $p \geq 2$, and $0 \leq i < k$, we have 
\begin{align*}
\Lp{\pd^i_s \vk}{p} \leq c \Lp{\vk}{2}^{1-\va} \Lp{\vk}{k,2}^\va, 
\end{align*}
where $\va= (i+ \tfrac{1}{2} - \tfrac{1}{p})/k$ and $c= c(n,k,p)$. 
\end{lem}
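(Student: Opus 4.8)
The plan is to reduce the assertion to the classical one–dimensional Gagliardo--Nirenberg interpolation inequality by exploiting the scale invariance that is built into the norms $\Lp{\cdot}{k,p}$. First I would parametrize $\gm$ by arclength, so that $\vk$ becomes a function on the interval $[0,\mL(\gm)]$, the symbol $\pd_s$ is the ordinary derivative in $s$, and $\int_I(\,\cdot\,)$ is read as the arclength integral $\int_0^{\mL(\gm)}(\,\cdot\,)\,ds$. The governing observation is that, under the dilation $\gm\mapsto\mu\gm$ with $\mu>0$, one has $\mL(\gm)\mapsto\mu\,\mL(\gm)$, $ds\mapsto\mu\,ds$ and $\pd^i_s\vk\mapsto\mu^{-1-i}\pd^i_s\vk$.

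Substituting these into $\Lp{\pd^i_s\vk}{p}=\mL(\gm)^{i+1-1/p}\bigl(\int_I|\pd^i_s\vk|^p\bigr)^{1/p}$, the power of $\mu$ contributed by the prefactor, namely $\mu^{i+1-1/p}$, is exactly cancelled by the factor $\mu^{1/p-(1+i)}$ coming from the rescaled integral; hence each of the three quantities $\Lp{\pd^i_s\vk}{p}$, $\Lp{\vk}{2}$ and $\Lp{\vk}{k,2}$ is invariant under the dilation, and therefore so is the claimed inequality. Consequently it suffices to establish the estimate for curves normalized so that $\mL(\gm)=1$. For such a curve every length prefactor equals $1$, and the scale–invariant norms reduce to ordinary arclength Sobolev norms on the unit interval: $\Lp{\pd^i_s\vk}{p}=\Ln{\pd^i_s\vk}{p}$, $\Lp{\vk}{2}=\Ln{\vk}{2}$ and $\Lp{\vk}{k,2}=\sum_{j=0}^{k}\Ln{\pd^j_s\vk}{2}=\|\vk\|_{H^k(I)}$.

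In these terms the assertion becomes the classical interpolation inequality on the bounded interval $I$,
\begin{align*}
\Ln{\pd^i_s\vk}{p}\le c\,\Ln{\vk}{2}^{1-\va}\,\|\vk\|_{H^k(I)}^{\va},\qquad \va=\frac{i+\tfrac12-\tfrac1p}{k},
\end{align*}
and I would check that the exponent is admissible: for $0\le i<k$ and $p\ge 2$ one has $i+\tfrac12-\tfrac1p\ge 0$ and $i+\tfrac12-\tfrac1p\le k-\tfrac12<k$, so that $\va\in[0,1)$, precisely the value dictated by the one–dimensional scaling relation $\tfrac1p-i=\va(\tfrac12-k)+(1-\va)\tfrac12$. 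Since $\vk$ is a scalar function, the ambient dimension enters $c$ only trivially, so in fact $c=c(k,p)$.

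The remaining, and genuinely delicate, point is that on a \emph{bounded} domain the pure multiplicative form holds without any additive remainder only because the \emph{full} norm $\|\vk\|_{H^k(I)}$—rather than merely the top seminorm $\Ln{\pd^k_s\vk}{2}$—appears on the right. I would start from Nirenberg's inequality on $I$ in its additive form $\Ln{\pd^i_s\vk}{p}\le c\,\|\vk\|_{H^k(I)}^{\va}\Ln{\vk}{2}^{1-\va}+c\,\Ln{\vk}{2}$ and then absorb the additive term using the trivial bound $\Ln{\vk}{2}=\Ln{\vk}{2}^{\va}\Ln{\vk}{2}^{1-\va}\le\|\vk\|_{H^k(I)}^{\va}\Ln{\vk}{2}^{1-\va}$, which yields the clean estimate above. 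For closed curves, as in \cite{dziuk}, one can instead argue directly by iterated integration by parts, with no boundary contributions at all; the new feature in the open–curve setting of \cite{lin} is that each integration by parts produces boundary terms on $\pd I$, and controlling these is the main obstacle. They are dominated via the one–dimensional trace/Sobolev embedding $\Li{\pd^j_s\vk}{I}\le c\,\|\vk\|_{H^k(I)}$, and it is exactly this embedding that forces the full $H^k$ norm to appear and underlies the absorption step. Once the normalized inequality is in hand, undoing the normalization costs nothing by scale invariance, completing the proof.
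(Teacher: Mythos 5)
The paper does not prove this lemma at all: it is quoted verbatim from \cite{lin} (itself adapted from the closed-curve version in \cite{dziuk}), so there is no in-paper argument to compare against. Your proof is correct and follows the standard route one would expect behind the citation: the norms $\Lp{\cdot}{k,p}$ are built precisely so that they are invariant under the dilation $\gm\mapsto\mu\gm$ (your bookkeeping of the powers of $\mu$ checks out), the reduction to $\mL(\gm)=1$ turns the claim into the one-dimensional Gagliardo--Nirenberg inequality on a unit interval with the full $H^k$ norm on the right, and the exponent $\va=(i+\tfrac12-\tfrac1p)/k$ is exactly the one dictated by the dimensional balance, with $p\ge 2$ and $i<k$ guaranteeing $\va\in[i/k,1)$. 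The absorption of the additive lower-order term via $\Ln{\vk}{2}\le \|\vk\|_{H^k(I)}^{\va}\Ln{\vk}{2}^{1-\va}$ is also fine; the only cosmetic remark is that the constant is written $c(n,k,p)$ because in \cite{lin} the curves live in $\R^n$ and $\vk$ is the curvature vector, which does not affect your argument for scalar curvature in the plane.
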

In order to prove a long time existence of solutions to \eqref{g-curvature-flow}, we make use of the following Lemma, 
which is a modification of Lemma 2.2 in \cite{dziuk}. 
\begin{lem} \label{l-e-method}
Let $\gm : I \times [0,T) \to \R^2$ satisfy the equation \eqref{s-s-flow} 
and $\phi : I \times [0,T) \to \R$ be a scalar function defined on $\gm$ satisfying 
\begin{align} \label{eq-phi}
\begin{cases}
& \pd_t \phi = - 2\pd^4_s \phi + Y \quad \text{in} \quad I \times [0,T), \\
& \phi=0, \,\,\,\, \pd^2_s \phi=0 \quad \,\,\, \text{on} \quad \pd I \times [0, T). 
\end{cases}
\end{align}
Then it holds that 
\begin{align} \label{e-method}
\dfrac{d}{dt} \dfrac{1}{4} \int_\gm \phi^2 \, ds + \int_\gm (\pd^2_s \phi)^2 \, ds 
 = \dfrac{1}{2}\int_\gm \phi Y \, ds + \dfrac{1}{4} \int_\gm \phi^2 \vk V^\lm \, ds.  
\end{align}
\end{lem}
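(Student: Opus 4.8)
The plan is to differentiate the quantity $\tfrac14\int_\gm \phi^2\,ds$ in time directly, being careful that the arclength measure $ds$ itself depends on $t$, and then to integrate by parts twice to extract the dissipation term $\int_\gm(\pd^2_s\phi)^2\,ds$. The only inputs needed are the evolution law \eqref{ds-pd} for the line element from Lemma \ref{k-flow}, the equation \eqref{eq-phi} prescribing $\pd_t\phi$, and the two boundary conditions $\phi=0$ and $\pd^2_s\phi=0$ on $\pd I$.

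First I would write $\int_\gm\phi^2\,ds=\int_I\phi^2|\gm_x|\,dx$ and differentiate under the integral sign. Since $\pd_t\,ds=\vk V^\lm\,ds$ by \eqref{ds-pd}, this produces
\[
\frac{d}{dt}\,\frac14\int_\gm\phi^2\,ds
=\frac12\int_\gm\phi\,\pd_t\phi\,ds+\frac14\int_\gm\phi^2\vk V^\lm\,ds,
\]
where $\pd_t\phi$ denotes the time derivative at fixed $x$. The second term is already one of the two terms on the right-hand side of \eqref{e-method}, so it remains only to treat the first.

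Next I would substitute $\pd_t\phi=-2\pd^4_s\phi+Y$ from \eqref{eq-phi}, which splits the remaining term as
\[
\frac12\int_\gm\phi\,\pd_t\phi\,ds
=-\int_\gm\phi\,\pd^4_s\phi\,ds+\frac12\int_\gm\phi Y\,ds.
\]
The term $\tfrac12\int_\gm\phi Y\,ds$ is the second target term, so everything reduces to the identity $-\int_\gm\phi\,\pd^4_s\phi\,ds=-\int_\gm(\pd^2_s\phi)^2\,ds$. To obtain it I would integrate by parts twice in the arclength variable, using that $\int_\gm f\,\pd_s g\,ds=[fg]_{\pd I}-\int_\gm(\pd_s f)\,g\,ds$, which follows from $\pd_s=|\gm_x|^{-1}\pd_x$ and $ds=|\gm_x|\,dx$. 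The first integration by parts leaves a boundary term $[\phi\,\pd^3_s\phi]_{\pd I}$ that vanishes because $\phi=0$ on $\pd I$; the second leaves $[\pd_s\phi\,\pd^2_s\phi]_{\pd I}$ that vanishes because $\pd^2_s\phi=0$ on $\pd I$. Collecting the surviving interior integral and moving $\int_\gm(\pd^2_s\phi)^2\,ds$ to the left-hand side yields precisely \eqref{e-method}.

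The computation is essentially routine; the only points requiring care are the time-dependence of the measure, which is exactly what supplies the final term $\tfrac14\int_\gm\phi^2\vk V^\lm\,ds$, and the verification that both boundary contributions from the double integration by parts vanish under the stated boundary conditions. I would also note that no commutation of $\pd_t$ and $\pd_s$ (Lemma \ref{comu}) is needed here, since the evolution of $\phi$ is given directly by \eqref{eq-phi}; this is what keeps the argument clean and free of extra curvature terms.
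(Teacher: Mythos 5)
Your proposal is correct and follows essentially the same route as the paper: differentiate under the integral using $\pd_t(ds)=\vk V^\lm\,ds$ from Lemma \ref{k-flow}, substitute the evolution equation for $\phi$, and integrate by parts twice, with the boundary terms $[\phi\,\pd^3_s\phi]_{\pd I}$ and $[\pd_s\phi\,\pd^2_s\phi]_{\pd I}$ killed by $\phi=0$ and $\pd^2_s\phi=0$ respectively. Your closing observation that no commutation rule is needed is also consistent with the paper's argument.
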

\begin{proof}
It follows from the equation in \eqref{eq-phi} and Lemma \ref{k-flow} that 
\begin{align*}
\dfrac{d}{dt} \dfrac{1}{4} \int_\gm \phi^2 \, ds 
 &= \dfrac{1}{2}\int_\gm \phi \pd_t \phi \, ds + \dfrac{1}{4} \int_\gm \phi^2 \pd_t(ds) \\
 &= \dfrac{1}{2}\int_\gm \phi (- 2\pd^4_s \phi + Y) \, ds + \dfrac{1}{4} \int_\gm \phi^2 \vk V^\lm ds. 
\end{align*} 
With the aid of the boundary conditions in \eqref{eq-phi}, we obtain 
\begin{align*}
\int_\gm \phi \pd^4_s \phi \, ds 
 = - \int_\gm \pd_s \phi \pd^3_s \phi \, ds 
 =   \int_\gm (\pd^2_s \phi)^2 \, ds. 
\end{align*}
Then we observe \eqref{e-method}. 
\end{proof}

By virtue of Lemma \ref{vanishing}, we observe that $\pd^m_t V^\lm=0$ and $\pd^2_s \pd^m_t V^\lm=0$ hold 
on $\pd I$ for any $m \in \N \cup \{ 0 \}$. 
The fact implies that we can apply Lemma \ref{l-e-method} to $\phi= \pd^m_t V^\lm$.  
To do so, first we introduce the following notation for a convenience. 
\begin{dfn} {\rm (\cite{b-m-n})}  \label{q-def}
We use the symbol $\mfq^r(\pd^l_s \vk)$ for a polynomial with constant coefficients 
such that each of its monomials is of the form 
\begin{align*}
\prod^{N}_{i=1} \pd^{j_{i}}_s \vk 
\quad \text{\rm with} \quad 0 \leq j_{i} \leq l 
\quad \text{\rm and} \quad N \geq 1
\end{align*}
with 
\begin{align*}
r= \sum^{N}_{i=1}(j_{i} +1). 
\end{align*}
\end{dfn}
Making use of the notation, we obtain the following: 
\begin{lem} \label{formula-1}
Suppose that $\gm : I \times [0, \infty) \to \R^2$ satisfies \eqref{g-curvature-flow}. 
Let $\phi$ be a scalar function defined on $\gm$. 
Then  the following formulae hold for any $m$, $l \in \N${\rm :} 
\begin{align}
\pd^m_s V^\lm &= \mfq^{3+m}(\pd^{2+m}_s \vk) - \lm^2 \pd^m_s \vk, \label{V-s-m} \\
\pd_t \pd^m_s \phi 
 &= \pd^m_s \pd_t \phi + \sum^{m-1}_{i=0} (\mfq^{4+i}(\pd^{2+i}_s \vk) + \mfq^{2+i}(\pd^i_s \vk)) \pd^{m-i}_s \phi, \label{commu-t-s-m} \\
\pd_t \pd^m_s \vk
 &= -2 \pd^{m+4}_s \vk + \mfq^{m+5}(\pd^{m+2}_s \vk) + \mfq^{m+3}(\pd^{m+2}_s \vk), \label{k-t-sm} \\ 
\pd_t \mfq^l(\pd^m_s \vk) 
 &= \mfq^{l+4}(\pd^{m+4}_s \vk) + \mfq^{l+2}(\pd^{m+2}_s \vk). \label{mfq-t}
\end{align}
\end{lem}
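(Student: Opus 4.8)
My plan is to reduce all four identities to two elementary bookkeeping rules for the symbols of Definition \ref{q-def}, and then to chain the four formulae in the order \eqref{V-s-m}, \eqref{k-t-sm} (case $m=0$), \eqref{commu-t-s-m}, \eqref{k-t-sm} (general $m$), \eqref{mfq-t}. The two rules, both immediate from Definition \ref{q-def}, are that $\pd_{s}$ raises weight and top order by one,
\[
\pd_{s}\,\mfq^{r}(\pd^{l}_{s}\vk)=\mfq^{r+1}(\pd^{l+1}_{s}\vk),
\]
and that a product adds weights and keeps the larger top order,
\[
\mfq^{r_{1}}(\pd^{l_{1}}_{s}\vk)\,\mfq^{r_{2}}(\pd^{l_{2}}_{s}\vk)=\mfq^{r_{1}+r_{2}}(\pd^{\max\{l_{1},l_{2}\}}_{s}\vk).
\]
The first holds because $\pd_{s}$ turns one factor $\pd^{j}_{s}\vk$ into $\pd^{j+1}_{s}\vk$, raising $j+1$ by one; the second follows by concatenating monomials. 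Throughout, the coefficients of the $\mfq$-symbols are allowed to depend on $\lm$, which is why in \eqref{V-s-m} the linear remainder $-\lm^{2}\pd^{m}_{s}\vk$, of weight $m+1\ne m+3$, must be kept separate, whereas in \eqref{k-t-sm}--\eqref{mfq-t} all $\lm$-terms get absorbed.

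First I would prove \eqref{V-s-m} by differentiating $V^{\lm}=2\pd^{2}_{s}\vk+\vk^{3}-\lm^{2}\vk$ directly: $\pd^{m}_{s}V^{\lm}=2\pd^{m+2}_{s}\vk+\pd^{m}_{s}(\vk^{3})-\lm^{2}\pd^{m}_{s}\vk$, where the first term has weight $m+3$ and top order $m+2$, and by Leibniz every monomial of $\pd^{m}_{s}(\vk^{3})$ is a product of three curvature-derivatives of total order $m$, hence of weight $m+3$ and top order $\le m$; collecting the first two gives $\mfq^{m+3}(\pd^{m+2}_{s}\vk)$. The case $m=0$ of \eqref{k-t-sm} then comes from $\pd_{t}\vk=-\pd^{2}_{s}V^{\lm}-\vk^{2}V^{\lm}$ (Lemma \ref{k-flow}): peeling off the leading $2\pd^{4}_{s}\vk$ from $\pd^{2}_{s}V^{\lm}=2\pd^{4}_{s}\vk+\pd^{2}_{s}(\vk^{3})-\lm^{2}\pd^{2}_{s}\vk$, in which $\pd^{2}_{s}(\vk^{3})=\mfq^{5}(\pd^{2}_{s}\vk)$ has top order two, and writing $\vk^{2}V^{\lm}=\mfq^{5}(\pd^{2}_{s}\vk)+\mfq^{3}(\pd^{2}_{s}\vk)$ by the product rule, yields $\pd_{t}\vk=-2\pd^{4}_{s}\vk+\mfq^{5}(\pd^{2}_{s}\vk)+\mfq^{3}(\pd^{2}_{s}\vk)$.

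Next I would establish \eqref{commu-t-s-m} by induction on $m$. The case $m=1$ is Lemma \ref{comu} together with the identification $-\vk V^{\lm}=-2\vk\pd^{2}_{s}\vk-\vk^{4}+\lm^{2}\vk^{2}=\mfq^{4}(\pd^{2}_{s}\vk)+\mfq^{2}(\pd^{0}_{s}\vk)$. For the step I write $\pd_{t}\pd^{m+1}_{s}\phi=\pd_{t}\pd_{s}(\pd^{m}_{s}\phi)$, apply the case $m=1$ to $\pd^{m}_{s}\phi$, substitute the inductive hypothesis into $\pd_{t}\pd^{m}_{s}\phi$, and push the outer $\pd_{s}$ through by the first rule. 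The two sums that arise—one in which $\pd_{s}$ lands on the $\mfq$-coefficients (shifting $i\mapsto i+1$, producing the terms $i'=1,\dots,m$) and one in which it lands on $\pd^{m-i}_{s}\phi$ (producing $i'=0,\dots,m-1$), together with the single commutator term from the case $m=1$ (giving $i'=0$)—recombine into the asserted sum with upper index $m$. This reindexing is the only genuinely delicate point and is where I expect the main effort; the rest is mechanical. With \eqref{commu-t-s-m} proved, \eqref{k-t-sm} for general $m$ follows by setting $\phi=\vk$: the term $\pd^{m}_{s}\pd_{t}\vk$ equals $-2\pd^{m+4}_{s}\vk+\mfq^{m+5}(\pd^{m+2}_{s}\vk)+\mfq^{m+3}(\pd^{m+2}_{s}\vk)$ by the case $m=0$ and the first rule, while every product in the commutator sum has weight $m+5$ or $m+3$ and top order $\le m+2$.

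Finally, \eqref{mfq-t} follows from \eqref{k-t-sm} by the Leibniz rule in $t$: on a monomial $\prod_{k}\pd^{j_{k}}_{s}\vk$ of $\mfq^{l}(\pd^{m}_{s}\vk)$ one has $\pd_{t}(\prod_{k}\pd^{j_{k}}_{s}\vk)=\sum_{k}\big(\prod_{i\ne k}\pd^{j_{i}}_{s}\vk\big)\,\pd_{t}\pd^{j_{k}}_{s}\vk$, and by \eqref{k-t-sm} (absorbing $-2\pd^{j_{k}+4}_{s}\vk$ into the first symbol) each $\pd_{t}\pd^{j_{k}}_{s}\vk=\mfq^{j_{k}+5}(\pd^{j_{k}+4}_{s}\vk)+\mfq^{j_{k}+3}(\pd^{j_{k}+2}_{s}\vk)$. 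Multiplying by the remaining factors, of combined weight $l-(j_{k}+1)$ and top order $\le m$, and using the second rule, each summand becomes $\mfq^{l+4}(\pd^{m+4}_{s}\vk)+\mfq^{l+2}(\pd^{m+2}_{s}\vk)$, the top orders being controlled by $j_{k}\le m$; summing over $k$ gives \eqref{mfq-t}.
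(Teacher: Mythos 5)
Your proposal is correct and follows essentially the same route as the paper: \eqref{V-s-m} by direct differentiation of $V^\lm=\mfq^3(\pd_s^2\vk)-\lm^2\vk$, \eqref{commu-t-s-m} by induction on $m$ starting from Lemma \ref{comu} with the identification $-\vk V^\lm=\mfq^4(\pd_s^2\vk)+\mfq^2(\vk)$, \eqref{k-t-sm} by combining \eqref{k-flow} with \eqref{commu-t-s-m}, and \eqref{mfq-t} by the Leibniz rule in $t$ applied factorwise together with \eqref{k-t-sm}. Your explicit statement of the two bookkeeping rules for the $\mfq$-symbols and of the $\lm$-dependence of the coefficients only makes transparent what the paper leaves implicit.
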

\begin{proof}
Since $V^\lm = \mfq^3(\pd^2_s \vk) - \lm^2 \vk$, the assertion \eqref{V-s-m} is followed from a simple calculation. 
Regarding \eqref{commu-t-s-m}, we proceed by induction on $m$. For $m=1$, we have 
\begin{align*}
\pd_t \pd_s \phi 
 = \pd_s \pd_t \phi - \vk V^\lm \pd_s \phi 
 = \pd_s \pd_t \phi - (\mfq^4(\pd^2_s \vk) + \mfq^2(\vk)) \pd_s \phi. 
\end{align*}
Assuming that \eqref{commu-t-s-m} is true for some $m \geq 1$, we obtain 
\begin{align*}
\pd_t \pd^{m+1}_s \phi 
 &= \pd_s \pd_t \pd^m_s \phi + (\mfq^4(\pd^2_s \vk) + \mfq^2(\vk)) \pd^{m+1}_s \phi \\
 &= \pd_s \left\{ \pd^m_s \pd_t \phi + \sum^{m-1}_{i=0} (\mfq^{4+i}(\pd^{2+i}_s \vk) 
                         + \mfq^{2+i}(\pd^i_s \vk)) \pd^{m-i}_s \phi \right\} \\
 & \qquad + (\mfq^4(\pd^2_s \vk) + \mfq^2(\vk)) \pd^{m+1}_s \phi \\  
 &=  \pd^{m+1}_s \pd_t \phi + \sum^{m}_{i=0} (\mfq^{4+i}(\pd^{2+i}_s \vk) + \mfq^{2+i}(\pd^i_s \vk)) \pd^{m+1-i}_s \phi.      
\end{align*}
\eqref{k-t-sm} is followed from \eqref{k-flow} and \eqref{commu-t-s-m} directly. 
Finally we obtain \eqref{mfq-t} for $m$, $l \in \N$ fixed arbitrarily as follows: 
\begin{align*}
\pd_t \mfq^l(\pd^m_s \vk) 
 &= \sum^m_{j=0} \mfq^{l-j-1}(\pd^m_s \vk) \cdot \pd_t \pd^j_s \vk \\
 &= \sum^m_{j=0} \mfq^{l-j-1}(\pd^m_s \vk) \cdot \{ -2 \pd^{j+4}_s \vk + \mfq^{j+5}(\pd^{j+2}_s \vk) + \mfq^{j+3}(\pd^{j+2}_s \vk) \} \\
 &= \sum^m_{j=0} \mfq^{l+4}(\pd^{\max{\{ m, j+4 \}}}_s \vk) 
       + \sum^m_{j=0} \mfq^{l+2}(\pd^{\max{\{ m, j+2 \}}}_s \vk) \\
 &= \sum^{4}_{j=0} \mfq^{l+4}(\pd^{m+j}_s \vk) + \sum^{2}_{j=0} \mfq^{l+2}(\pd^{m+j}_s \vk) \\
 &= \mfq^{l+4}(\pd^{m+4}_s \vk) + \mfq^{l+2}(\pd^{m+2}_s \vk). 
\end{align*}
\end{proof}
With the aid of Lemma \ref{formula-1}, we obtain a representation of $\pd^m_t V^\lm$. 
\begin{lem}
For each $m \in \N$, it holds that 
\begin{align} \label{V-t-m}
\pd^m_t V^\lm 
 &= (-1)^m 2^{m+1} \pd^{4m+2}_s \vk + \mfq^{4m+3}(\pd^{4m}_s \vk) \\
 & \qquad \qquad \quad + \sum^{m}_{j=1} \mfq^{4m+3-2j}(\pd^{4m+2-2j}_s \vk). \notag
\end{align}
\end{lem}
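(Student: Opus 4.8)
The natural approach is induction on $m$, with Lemma \ref{formula-1} as the engine. The point is that \eqref{k-t-sm} carries the \emph{exact} leading coefficient of $\pd_t\pd^k_s\vk$, whereas \eqref{mfq-t} controls all remaining terms purely through their weight and their top order of differentiation; combining the two lets one propagate the displayed leading term of \eqref{V-t-m} while dumping everything else into the $\mfq$-calculus. Throughout I would use freely that $\mfq^r(\pd^l_s\vk)+\mfq^r(\pd^{l'}_s\vk)=\mfq^r(\pd^{\max\{l,l'\}}_s\vk)$, that a constant (in particular $\lm$-dependent) multiple of $\mfq^r(\pd^l_s\vk)$ is again $\mfq^r(\pd^l_s\vk)$, and that a weight-$r$ term whose top derivative is $\le l$ may be absorbed into $\mfq^r(\pd^l_s\vk)$.

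For the base case $m=1$ I would differentiate $V^\lm=2\pd^2_s\vk+\vk^3-\lm^2\vk$ in $t$. Applying \eqref{k-t-sm} with index $2$ to $\pd_t\pd^2_s\vk$ gives the explicit term $-4\pd^6_s\vk=(-1)^1 2^{2}\pd^{4\cdot1+2}_s\vk$ together with contributions of weight $7$ and $5$, while differentiating $\vk^3$ and $-\lm^2\vk$ by \eqref{k-t-sm} with index $0$ produces further contributions of weight $\le 7$. Collecting the weight-$7$ and weight-$5$ pieces into $\mfq^{4\cdot1+3}(\pd^{4\cdot1}_s\vk)$ and into the single summand $\mfq^{4\cdot1+3-2}(\pd^{4\cdot1+2-2}_s\vk)$ yields \eqref{V-t-m} at $m=1$, the only point needing care being that the lower-weight $\lm$-dependent terms conform to the displayed pattern (the recurring subtlety discussed below).

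For the inductive step I would assume \eqref{V-t-m} at level $m$ and apply $\pd_t$ term by term. On the leading term I invoke \eqref{k-t-sm} with index $4m+2$: the identity $\pd_t\pd^{4m+2}_s\vk=-2\pd^{4m+6}_s\vk+\mfq^{4m+7}(\pd^{4m+4}_s\vk)+\mfq^{4m+5}(\pd^{4m+4}_s\vk)$, multiplied by $(-1)^m2^{m+1}$, yields the new leading term $(-1)^{m+1}2^{m+2}\pd^{4(m+1)+2}_s\vk$ (since $4m+6=4(m+1)+2$), together with a $\mfq^{4(m+1)+3}(\pd^{4(m+1)}_s\vk)$ and a $\mfq^{4(m+1)+1}(\pd^{4(m+1)}_s\vk)$, the latter being the $j=1$ summand at level $m+1$. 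To the middle term $\mfq^{4m+3}(\pd^{4m}_s\vk)$ and to each summand $\mfq^{4m+3-2j}(\pd^{4m+2-2j}_s\vk)$ I apply \eqref{mfq-t}, which raises weight and top derivative by $4$ and by $2$, respectively.

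The last step is a reindexing. The weight-$(+4)$ output of \eqref{mfq-t} on the $j$-th summand reproduces the $j$-th summand at level $m+1$ for $j=1,\dots,m$, and from the middle term reproduces the new $\mfq^{4(m+1)+3}(\pd^{4(m+1)}_s\vk)$; the weight-$(+2)$ output, after the shift $j\mapsto j+1$, supplies the summands $j=2,\dots,m+1$. Together with the $j=1$ summand already produced from the leading term, this rebuilds the full sum $\sum_{j=1}^{m+1}\mfq^{4(m+1)+3-2j}(\pd^{4(m+1)+2-2j}_s\vk)$, and the induction closes. I expect the genuine difficulty to lie entirely in this bookkeeping: one must check that every generated monomial has the correct weight, that its top derivative never exceeds the bound attached to the slot it is assigned to, and---most delicately---that the $\lm$-linear part of $V^\lm$, which generates lower-weight $\lm$-dependent terms under repeated time differentiation, is correctly absorbed into the displayed $\mfq$-symbols rather than producing an exceptional term outside the stated pattern.
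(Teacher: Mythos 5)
Your proposal is correct and follows essentially the same route as the paper: induction on $m$, with the base case computed directly from $V^\lm=2\pd^2_s\vk+\vk^3-\lm^2\vk$ via \eqref{k-t-sm}, and the inductive step obtained by applying \eqref{k-t-sm} to the leading term (producing the new coefficient $(-1)^{m+1}2^{m+2}$ and the $j=1$ summand) and \eqref{mfq-t} to the $\mfq$-terms, followed by the reindexing $j\mapsto j+1$. The bookkeeping concerns you raise, including the treatment of the $\lm$-dependent lower-weight terms, are exactly the points the paper's own computation passes through.
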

\begin{proof}
We proceed by induction on $m$. For $m=1$, we have  
\begin{align*}
\pd_t V^\lm 
 &= \pd_t (2 \pd^2_s \vk + \vk^3 - \lm^2 \vk) \\
 &= 2 ( -2 \pd^6_s \vk + \mfq^7(\pd^4_s \vk) + \mfq^5(\pd^4_s \vk) ) + 3 \vk^2 \pd_t \vk - \lm^2 \pd_t \vk \\
 &= -2^2 \pd^6_s \vk + \mfq^7(\pd^4_s \vk) + \mfq^5(\pd^4_s \vk).  
\end{align*}
Suppose that \eqref{V-t-m} holds for $m=k$. Then we have 
\begin{align} \label{calc-1}
 \pd^{k+1}_t V^\lm 
&= \pd_t \{ (-1)^k 2^{k+1} \pd^{4k+2}_s \vk + \mfq^{4k+3}(\pd^{4k}_s \vk) \\
& \qquad + \sum^{k}_{j=1} \mfq^{4k+3-2j}(\pd^{4k+2-2j}_s \vk) \} \notag \\
 &=  (-1)^k 2^{k+1} \{ -2 \pd^{4k+6}_s \vk + \mfq^{4k+7}(\pd^{4k+6}_s \vk) + \mfq^{4k+5}(\pd^{4k+6}_s \vk)  \} \notag \\
 & \qquad + \pd_t \{  \mfq^{4k+3}(\pd^{4k}_s \vk) + \sum^{k}_{j=1} \mfq^{4k+3-2j}(\pd^{4k+2-2j}_s \vk) \}. \notag
\end{align}
By virtue of \eqref{mfq-t}, the last term in \eqref{calc-1} is reduced to 
\begin{align*} 
\pd_t & \{  \mfq^{4k+3}(\pd^{4k}_s \vk) + \sum^{k}_{j=1} \mfq^{4k+3-2j}(\pd^{4k+2-2j}_s \vk) \} \\
 &= \mfq^{4k+7}(\pd^{4k+4}_s \vk) + \mfq^{4k+5}(\pd^{4k+2}_s \vk) \\
 & \qquad  + \sum^{k}_{j=1} \{ \mfq^{4k+7-2j}(\pd^{4k+6-2j}_s \vk) + \mfq^{4k+5-2j}(\pd^{4k+4-2j}_s \vk) \} \\
 &= \mfq^{4(k+1)+3}(\pd^{4(k+1)}_s \vk) + \sum^{k+1}_{j=1} \mfq^{4(k+1)+3-2j}(\pd^{4(k+1)+2-2j}_s \vk). 
\end{align*}
This implies that \eqref{V-t-m} holds for any $m \in \N$. 
\end{proof}

We are in the position to prove the main result of this section. 
\begin{thm} \label{g-exe-g-flow}
Let $\lm \in \R$ be non-zero constant. 
Let $\gm_0 : I \to \R^2$ be a smooth open curve satisfying 
\begin{align*}
\gm_0(0)=(0,0), \,\, \gm_0(1)=(R,0), \,\, \vk_0(0)= \vk_0(1)= \va,  
\end{align*}
where $\va \in \R$ is a given constant with $\av{\va} < \av{\lm}$. 
Then there exists a unique family of smooth open planar curves $\gm(x,t)$ satisfying \eqref{g-curvature-flow} 
for any finite time $t>0$. 
\end{thm}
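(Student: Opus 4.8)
The plan is to prove long-time existence by the standard continuation argument. Since \eqref{g-curvature-flow} is a quasilinear fourth-order parabolic system with the stated boundary conditions, short-time existence and uniqueness are classical and produce a solution on a maximal interval $[0,T)$. I would then show that, if $T<\infty$, all geometric quantities stay bounded in $C^\infty$ up to $T$, so the solution can be restarted past $T$, contradicting maximality. Thus everything reduces to a priori estimates that are uniform on $[0,T)$, and uniqueness on the whole interval follows from a standard energy estimate for the difference of two solutions.

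First I would record the zeroth-order bound. By the first-variation computation at the beginning of this appendix, the flow \eqref{s-s-flow} is the $L^2$-gradient flow of $\mE_{\lm,\va}$ under the symmetric Navier boundary condition, so the energy is non-increasing and $\mE_{\lm,\va}(\gm(t))\le\mE_{\lm,\va}(\gm_0)$ for all $t$. Because $\av{\va}<\av{\lm}$, Lemma \ref{l-E-bd-b} then gives uniform bounds
\[
\Lg{\vk}^2\le C,\qquad \mL(\gm(t))\le C,
\]
while the fixed endpoints force $\mL(\gm(t))\ge R$; combined with \eqref{ds-pd} and the resulting control on $\vk V^\lm$, this also bounds the line element $\av{\gm_x}$ from above and below, keeping the arclength $s$ and the fixed parameter $x$ comparable.

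The core of the argument is the higher-order bootstrap, and the key observation is that although high spatial derivatives of $\vk$ need not vanish on $\pd I$, the remark following Lemma \ref{vanishing} guarantees $\phi_m:=\pd^m_t V^\lm$ satisfies $\phi_m=\pd^2_s\phi_m=0$ on $\pd I$ for every $m$, so these time-derivative quantities are exactly the objects adapted to integration by parts under our boundary conditions. Using \eqref{V-t-m} together with \eqref{k-t-sm}, one checks that $\phi_m$ solves a parabolic equation of the form $\pd_t\phi_m=-2\pd^4_s\phi_m+Y_m$ in which the leading order-$(4m+6)$ terms cancel, so $Y_m$ is of strictly lower differential order and is built from $\mfq$-polynomials in the curvature. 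I would therefore apply Lemma \ref{l-e-method} with $\phi=\phi_m$ to obtain the energy identity and estimate its right-hand side using the scale-invariant interpolation inequalities of \cite{lin} and the $\mfq$-calculus of Lemma \ref{formula-1}. Power counting shows the top-order contributions are absorbable into $\int_\gm(\pd^2_s\phi_m)^2\,ds$ on the left, leaving terms controlled by the base bound on $\Lg{\vk}$ and by the previously estimated energies of $\phi_0,\dots,\phi_{m-1}$; a Gronwall argument then yields a uniform-in-$t$ bound on $\int_\gm\phi_m^2\,ds$. Since \eqref{V-t-m} writes $\phi_m$ as $(-1)^m2^{m+1}\pd^{4m+2}_s\vk$ plus lower-order $\mfq$-terms, inverting this inductively converts these bounds into uniform control of $\int_\gm(\pd^k_s\vk)^2\,ds$ for every $k$.

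With all $L^2$ norms of curvature derivatives bounded uniformly on $[0,T)$, interpolation and the length bounds give uniform $C^\infty$ control of $\gm(\cdot,t)$, contradicting the maximality of $T$ and proving existence for all finite times. I expect the main obstacle to lie in the higher-order step: verifying that every boundary term generated by the repeated integrations by parts vanishes (which is precisely what Lemma \ref{vanishing} and its corollary are designed to supply) and carrying out the power counting of the $\mfq^r$-polynomials carefully enough that each top-order term is genuinely absorbed into $\int_\gm(\pd^2_s\phi_m)^2\,ds$ and each remainder is dominated by a strictly lower-order energy, so that the induction on $m$ closes.
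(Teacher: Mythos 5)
Your proposal is correct and follows essentially the same route as the paper: energy decrease plus Lemma \ref{l-E-bd-b} for the zeroth-order bounds, then the energy identity of Lemma \ref{l-e-method} applied to $\phi_m=\pd^m_t V^\lm$ (whose boundary values vanish by Lemma \ref{vanishing}), the $\mfq$-calculus and interpolation inequalities to absorb the top-order terms, and \eqref{V-t-m} to convert the resulting bounds into control of $\pd^{4m+2}_s\vk$ and hence of all curvature derivatives, contradicting finite-time breakdown. The only cosmetic difference is that the paper obtains $\tfrac{d}{dt}\int_\gm\phi_m^2\,ds\le C$ directly (so the bound grows at most linearly in $t$, which suffices on a finite interval) rather than invoking Gronwall.
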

\begin{proof}
Suppose not, there exists a time $t_1>0$ such that the smooth solution $\gm(x,t)$ of \eqref{g-curvature-flow} 
remains up to $t=t_1$. 
Setting $\phi= \pd^m_t V^\lm$, Lemma \ref{l-e-method} implies that 
\begin{align} \label{Vlm-integ-eq}
&\dfrac{d}{dt} \dfrac{1}{4} \int_\gm (\pd^m_t V^\lm)^2 \, ds + \int_\gm (\pd^2_s \pd^m_t V^\lm)^2 \, ds \\
& \quad = \dfrac{1}{2}\int_\gm \pd^m_t V^\lm Y \, ds + \dfrac{1}{4} \int_\gm (\pd^m_t V^\lm)^2 \vk V^\lm \, ds. \notag
\end{align}
Regarding the integral of $(\pd^2_s \pd^m_t V^\lm)^2$, we have  
\begin{align*}
(\pd^2_s \pd^m_t V^\lm)^2 
& \geq (2^{2(m+1)} - \ve) (\pd^{4m+4}_s \vk)^2 \\
& \quad + \{ \mfq^{4m+5}(\pd^{4m+2}_s \vk) 
                     + \sum^{m}_{j=1} \mfq^{4m+5-2j}(\pd^{4m+4-2j}_s \vk) \}^2 \\
 &= c_m  (\pd^{4m+4}_s \vk)^2 +   \sum^{m}_{j=0} \mfq^{8m+10-2j}(\pd^{4m+2}_s \vk) \\
 & \qquad + \sum^{m}_{j, l=1} \mfq^{8m+10-2(j+l)}(\pd^{4m+4-2 \min{\{ j,l \}}}_s \vk). 
\end{align*}
Regarding the integral of $\pd^m_t V^\lm Y$, setting 
\begin{align*}
\pd^m_t V^\lm Y 
& =  \pd^m_t V^\lm \mfq^{4m+7}(\pd^{4m+4}_s \vk) 
       + \pd^m_t V^\lm \mfq^{4m+5}(\pd^{4m+4}_s \vk) \\
& \qquad + \pd^m_t V^\lm \sum^{m+1}_{j=2} \mfq^{4m+7-2j}(\pd^{4m+6-2j}_s \vk) 
 :=  I_1 + I_2 + I_3,    
\end{align*}
and integrating by part once the highest order term, we find 
\begin{align*}
\int_\gm I_1 \, ds  
 &= - \int_\gm \pd_s \pd^m_t V^\lm \{ \mfq^{4m+7}(\pd^{4m+3}_s \vk) + \mfq^{4m+6}(\pd^{4m+3}_s \vk) \} \, ds \\
 &= - \sum^{m}_{j=0} \int_\gm \{ \mfq^{8m+11-2j}(\pd^{4m+3}_s \vk) + \mfq^{8m+10-2j}(\pd^{4m+3}_s \vk) \} \, ds, 
\end{align*}
and 
\begin{align*}
\int_\gm I_2 \, ds  
 &= - \int_\gm \pd_s \pd^m_t V^\lm \{ \mfq^{4m+5}(\pd^{4m+3}_s \vk) + \mfq^{4m+4}(\pd^{4m+3}_s \vk) \} \, ds \\
 &= - \sum^{m}_{j=0} \int_\gm \{ \mfq^{8m+9-2j}(\pd^{4m+3}_s \vk) + \mfq^{8m+8-2j}(\pd^{4m+3}_s \vk) \} \, ds.  
\end{align*}
Hence we see that 
\begin{align*}
\int_\gm \pd^m_t V^\lm Y \, ds 
 = \int_\gm \sum^{m+1}_{j=0} \{ & \mfq^{8m+11-2j}(\pd^{4m+3}_s \vk) + \mfq^{8m+10-2j}(\pd^{4m+3}_s \vk)  \\
     & \quad + \mfq^{8m+8-2j}(\pd^{4m+2}_s \vk) + \mfq^{8m+6-2j}(\pd^{4m}_s \vk)\} \\
     & + \sum^{m}_{l=1, j=1} \mfq^{8m+8-2(j+l)}(\pd^{ 4m+2-2 \min{\{ j,l \}}}_s \vk) \, ds. 
\end{align*} 
Since it holds that 
\begin{align*}
 \int_\gm (\pd^m_t V^\lm)^2 \vk V^\lm \, ds 
& = \int_\gm \sum^{m+1}_{j=0} \{ \mfq^{8m+10-2j}(\pd^{4m+2}_s \vk) + \mfq^{8m+10-2j}(\pd^{4m}_s \vk) \} \\
    & \qquad + \sum^{m}_{l=1, j=1} \mfq^{8m+10-2(j+l)}(\pd^{ 4m+2-2 \min{\{ j,l \}}}_s \vk) \, ds, 
\end{align*}
the equality \eqref{Vlm-integ-eq} is reduced to 
\begin{align} \label{integ-diff}
& \dfrac{d}{dt} \dfrac{1}{4} \int_\gm (\pd^m_t V^\lm)^2 \, ds + c_m(\ve) \int_\gm (\pd^{4m+4}_s \vk)^2 \, ds \\
& \quad = \int_\gm \biggm[  \sum^{m+1}_{j=0} \Bigm\{ \mfq^{8m+11-2j}(\pd^{4m+3}_s \vk) + \mfq^{8m+10-2j}(\pd^{4m+3}_s \vk) \notag \\
& \qquad \qquad \qquad \qquad 
  + \mfq^{8m+10-2j}(\pd^{4m+2}_s \vk) + \mfq^{8m+10-2j}(\pd^{4m}_s \vk) \Bigm\} \notag \\
& \qquad \qquad \quad 
   + \sum^{m}_{l=1, j=1} \mfq^{8m+10-2(j+l)}(\pd^{ 4m+2-2 \min{\{ j,l \}}}_s \vk) \biggm] \, ds. \notag
\end{align}
We estimate the integral of $\mfq^{8m+11}(\pd^{4m+3}_s \vk)$ which is the highest order term in the right-hand side of  \eqref{integ-diff}. 
By Definition \ref{q-def}, this term can be written as  
\begin{align*}
\mfq^{8m+11}(\pd^{4m+3}_s \vk) 
 = \sum_{j} \prod^{N_j}_{i=1} \pd^{c_{j_{i}}}_s \vk 
\end{align*}
with all the $c_{j_{i}}$ less that or equal to $4m+3$, and 
\begin{align*}
\sum^{N_j}_{i=1} (c_{j_{i}} + 1)= 8m+11
\end{align*}
for every $j$. Hence we have 
\begin{align*}
\av{\mfq^{8m+11}(\pd^{4m+3}_s \vk)} 
 \leq \sum_{j} \prod^{N_j}_{i=1} \av{\pd^{c_{j_{i}}}_s \vk}.  
\end{align*}
Putting 
\begin{align*}
Q_j= \prod^{N_j}_{i=1} \av{\pd^{c_{j_{i}}}_s \vk}, 
\end{align*}
it holds that 
\begin{align*}
\int_\gm \av{\mfq^{8m+11}(\pd^{4m+3}_s \vk)} \, ds \leq \sum_{j} \int_\gm Q_j \, ds. 
\end{align*}
After collecting the derivatives of the same order in $Q_j$, we can write 
\begin{align*}
Q_j=  \prod^{4m+3}_{l=0} \av{\pd^l_s \vk}^{\va_{j_l}} \quad \text{with} \quad 
\sum^{4m+3}_{l=0} \va_{j_l} (l+1)= 8m+11. 
\end{align*}
Using H\"older's inequality we get  
\begin{align*}
 \int_\gm Q_j \, ds 
 \leq \prod^{4m+3}_{l=0} \left( \int_\gm \av{\pd^l_s \vk}^{\va_{j_l} \lm_l} \right)^{1/\lm_l} 
 = \prod^{4m+3}_{l=0} \Lp{\pd^l_s \vk}{\va_{j_l} \lm_l}^{\va_{j_l}}, 
\end{align*}
where the value $\lm_l$ are chosen as follows: $\lm_l=0$ if $\va_{j_l}=0$ (in this case 
the corresponding term is not present in the product) and $\lm_l= (8m+11)/\va_{j_l} (l+1)$ if $\va_{j_l} \neq 0$. 
Clearly $\va_{j_l} \lm_l = \tfrac{8m+11}{l+1} \geq \tfrac{8m+11}{4m+4} > 2$ and 
\begin{align*}
\sum^{4m+3}_{l=0, \lm_l \neq 0} \dfrac{1}{\lm_l} = \sum^{4m+3}_{l=0, \lm_l \neq 0} \dfrac{\va_{j_l} (l+1)}{8m+11}=1. 
\end{align*}
Let $k_l= \va_{j_l} \lm_l -2$. The fact $\va_{j_l} \lm_l > 2$ implies that $k_l>0$. 
Then we obtain 
\begin{align*}
\Lp{\pd^l_s \vk}{\va_{j_l} \lm_l} \leq c \Lp{\vk}{2}^{1-\sigma_{j_l}} \Lp{\vk}{4m+4,2}^{\sigma_{j_l}},
\end{align*}
where $\sigma_{j_l}= (l+ \tfrac{1}{2} - \tfrac{1}{\va_{j_l} \lm_l})/(4m+4)$ and $c= c(j,l,m)$. 
Since 
\begin{align*}
\Lp{\vk}{4m+4,2}^2 
 \leq C(m) \left( \Lp{\pd^{4m+4}_s \vk}{2}^2 + \Lp{\vk}{2}^2 \right), 
\end{align*}
we observe that 
\begin{align*}
\Lp{\pd^l_s \vk}{\va_{j_l} \lm_l} \leq C \Lp{\vk}{2}^{1-\sigma_{j_l}} \left( \Lp{\pd^{4m+4}_s \vk}{2}^2 + \Lp{\vk}{2}^2 \right)^{\sigma_{j_l}}. 
\end{align*}
Multiplying together all the estimates, we obtain 
\begin{align} \label{est-Qj}
 \int_\gm Q_j \, ds 
& \leq  C \prod^{4m+3}_{l=0} \Lp{\vk}{2}^{(1-\sigma_{j_l}) \va_{j_l}} 
                                         \left( \Lp{\pd^{4m+4}_s \vk}{2} + \Lp{\vk}{2} \right)^{\sigma_{j_l} \va_{j_l}} \\
&= C \Lp{\vk}{2}^{\sum^{4m+3}_{l=0}(1-\sigma_{j_l}) \va_{j_l}} 
                                         \left( \Lp{\pd^{4m+4}_s \vk}{2} + \Lp{\vk}{2} \right)^{\sum^{4m+3}_{l=0}\sigma_{j_l} \va_{j_l}}. \notag
\end{align}
Then the exponent in the last term of \eqref{est-Qj} is written as 
\begin{align*}
\sum^{4m+3}_{l=0}\sigma_{j_l} \va_{j_l} 
 = \sum^{4m+3}_{l=0} \dfrac{\va_{j_l} (l+ \tfrac{1}{2} - \tfrac{1}{\va_{j_l} \lm_l})}{4m+4} 
 = \dfrac{\sum^{4m+3}_{l=0}  \va_{j_l} (l+ \tfrac{1}{2}) - 1}{4m+4}, 
\end{align*}
and hence by using the rescaling condition we have 
\begin{align*}
\sum^{4m+3}_{l=0}\sigma_{j_l} \va_{j_l} 
 &= \dfrac{\sum^{4m+3}_{l=0}  \va_{j_l} (l+ 1) - \tfrac{1}{2}\sum^{4m+3}_{l=0} \va_{j_l} - 1}{4m+4} \\
 &= \dfrac{8m+11 - \tfrac{1}{2}\sum^{4m+3}_{l=0} \va_{j_l} - 1}{4m+4} 
   = \dfrac{16m+20 - \sum^{4m+3}_{l=0} \va_{j_l}}{2(4m+4)}.  
\end{align*}
Noting that 
\begin{align*}
\sum^{4m+3}_{l=0} \va_{j_l} 
 \geq  \sum^{4m+3}_{l=0} \va_{j_l} \dfrac{l+1}{4m+4} = \dfrac{8m+11}{4m+4}, 
\end{align*}
we see that 
\begin{align*}
\sum^{4m+3}_{l=0}\sigma_{j_l} \va_{j_l} 
 \leq \dfrac{16m+20 - \tfrac{8m+10}{4m+4}}{2(4m+4)} 
 = 2 - \dfrac{1}{(4m+4)^2} < 2. 
\end{align*}
Hence we can apply the Young inequality to the product in the last term of \eqref{est-Qj}, 
in order to get the exponent $2$ on the first quantity, that is, 
\begin{align*}
 \int_\gm Q_j \, ds 
&\leq \dfrac{\vd_j}{2}  \left( \Lp{\pd^{4m+4}_s \vk}{2} + \Lp{\vk}{2} \right)^2 + C_j \Lp{\vk}{2}^\vb \\
&\leq \vd_j \Lp{\pd^{4m+4}_s \vk}{2}^2 + \Lp{\vk}{2}^2 + + C_j \Lp{\vk}{2}^{\vb_j} 
\end{align*}
for arbitrarily small $\vd_j>0$ and some constant $C_j>0$ and exponent $\vb_j>0$. 
Hence we get 
\begin{align*}
& \dfrac{d}{dt} \dfrac{1}{4} \int_\gm (\pd^m_t V^\lm)^2 \, ds + \dfrac{1}{2} \int_\gm (\pd^2_s \pd^m_t V^\lm)^2 \, ds 
   + \dfrac{c_m(\ve)}{2} \int_\gm (\pd^{4m+4}_s \vk)^2 \, ds \\
& \quad \leq \sum^{m+1}_{j=0} \vd_j \Lp{\pd^{4m+4}_s \vk}{2}^2 +  C \sum^{m+1}_{j=0} \Lp{\vk}{2}^{\vb_j}. \notag
\end{align*}
Letting $\vd_j>0$ be sufficiently small, we obtain 
\begin{align} \label{diff-integ-eq}
\dfrac{d}{dt} \dfrac{1}{4} \int_\gm (\pd^m_t V^\lm)^2 \, ds \leq C \sum^{m+1}_{j=0} \Lp{\vk}{2}^{\vb_j}. 
\end{align}
Since Lemma \ref{l-E-bd-b} gives us 
\begin{align*}
\Lp{\vk}{2}^2 \leq C(\va, \lm) \mE_{\lm, \va}(\gm_0), 
\end{align*}
\eqref{diff-integ-eq} implies that 
\begin{align} \label{bd-V-t-m}
\Ln{\pd^m_t V^\lm(t)}{2}^2  \leq C_1 t +  \Ln{\pd^m_t V^\lm(0)}{2}^2 
\end{align}
for any time $t \in [0,t_1)$. 
Using \eqref{V-t-m} and the interpolation inequality, we reduce \eqref{bd-V-t-m} to 
\begin{align} \label{bd-k-s-4m+2}
\Ln{\pd^{4m+2}_s \vk}{2}^2 \leq C_1 t +  \Ln{\pd^m_t V^\lm}{2}^2(0) + C_2, 
\end{align}
where $C_2$ depends on $\mE_{\lm, \va}(\gm_0)$. 
Combining \eqref{p-E-lm-va} and \eqref{bd-k-s-4m+2} with the interpolation inequality, 
we observe that there exists a positive constant depending only on  $\mE_{\lm, \va}(\gm_0)$ such that 
\begin{align} \label{L2-bd-k-deri}
\Ln{\pd^l_s \vk}{2} \leq C_1 t +  \Ln{\pd^m_t V^\lm}{2}^2(0) + C_3 
\end{align}
for any $0 \leq l < 4m+2$. For each $l \in \N$, it is easy to obtain that 
\begin{align} \label{k-est-1}
\Ln{\pd^{l-1}_s \vk}{\infty} \leq C \Ln{\pd^l_s \vk}{1} + \mL(\gm)^{-1} \Ln{\pd^{l-1}_s \vk}{1}. 
\end{align}
Applying H\"older's inequality to \eqref{k-est-1}, we obtain 
\begin{align} \label{un-bd-k}
\Ln{\pd^{l-1}_s \vk}{\infty} \leq \mL(\gm)^{1/2} \Ln{\pd^l_s \vk}{2} + \mL(\gm)^{-1/2} \Ln{\pd^{l-1}_s \vk}{2}. 
\end{align}
Then it follows from \eqref{L2-bd-k-deri} and \eqref{un-bd-k} that there exists a constant $C=C(\gm_0, t_1, \va, \lm)$ such that 
\begin{align}
\Ln{\pd^{l-1}_s \vk(t)}{\infty} \leq C
\end{align}
for each $l \in \N$ and any $t \in [0, t_1)$. 
This contradicts that the solution of \eqref{g-curvature-flow} remains smooth to $t=t_1$. 
We thus complete the proof. 
\end{proof} 


\begin{center}
{\bf Acknowledgements}
\end{center}

The first author was partially supported by the Fondazione CaRiPaRo Project 
{\it Nonlinear Partial Differential Equations: models, analysis, and 
control-theoretic problems}. 
The second author was partially supported by Grant-in-Aid for Young Scientists (B) (No. 24740097). 
This work was done while the second author was visiting Centro di Ricerca Matematica Ennio De Giorgi 
and Department of Mathematics, University of Padova, 
whose hospitality he gratefully acknowledges. 



\end{document}